\theoremstyle{definition}
 \newtheorem{dfn}{Definition}[section]
\theoremstyle{remark}
 \newtheorem{rmk}{Remark}[section]
 \numberwithin{equation}{section}
\theoremstyle{plain}
\newtheorem{thm}{Theorem}[section]
\newtheorem{prop}{Proposition}[section]
\newtheorem{lem}{Lemma}[section]
\newtheorem{cor}{Corollary}[section]
\newtheorem{fact}{Fact}[section]
\renewcommand{\leq}{\leqslant}
\renewcommand{\geq}{\geqslant}
\renewcommand{\setminus}{\smallsetminus}
\title{Classification and equivariant cohomology of circle actions on 3d manifolds}
\subjclass[2010]{Primary 57S25; Secondary 55N91}
\keywords{Circle action, 3-manifold, Seifert manifold, Equivariant cohomology}
\author[He]{\bfseries Chen He}
\address{
Department of Mathematics \\ 
Northeastern University   \\ 
Boston\\
USA}
\email{he.chen@husky.neu.edu}
\begin{document}

\vspace{18mm} \setcounter{page}{1} \thispagestyle{empty}

\begin{abstract}
The classification of Seifert manifolds was given in terms of numeric data by Seifert \cite{Se33}, and then generalized by Orlik and Raymond \cite{Ra68,OR68} to circle actions on closed 3d manifolds. In this paper, we further generalize the classification to circle actions on 3d manifolds with boundaries by adding a numeric parameter and a union of cycle graphs. Then we describe the equivariant cohomology of 3d manifolds with circle actions in terms of ring, module and vector-space structures. We also compute equivariant Betti numbers and Poincar\'e series for these manifolds and discuss the equivariant formality. 
\end{abstract}

\maketitle

\tableofcontents

\section{Introduction}
\vskip 15pt
The classification of closed 3d manifolds with ``nice" decompositions into circles was given by Seifert \cite{Se33} in terms of principal Euler number $b$, orientability $\epsilon$ and genus $g$ of the underlying 2d orbifolds, and pairs of coprime integers $(m_i,n_i)$ called Seifert invariants. Hence these manifolds were given the name Seifert manifolds. 

Later, the classification was generalized by Orlik and Raymond \cite{Ra68, OR68} to circle actions on closed 3d manifolds allowing fixed points and special exceptional orbits. Orlik and Raymond found that in their case the underlying 2d orbifolds have circle boundaries contributed by the fixed points and special exceptional orbits. Hence, besides the four types of numeric data used by Seifert, two more types of numeric data were introduced by Orlik and Raymond: the number $f$ of fixed components and the number $s$ of special exceptional components. Then Orlik and Raymond proved:

\newtheorem*{OR}{\textup{\textbf{Theorem}}}
\begin{OR}[Orlik-Raymond classification of closed 3d  $S^1$-manifolds, \cite{Ra68,OR68}]
Let $S^1$ act effectively and smoothly on a closed, connected smooth 3d manifold $M$. Then the orbit invariants
	\[
	\big\{b;(\epsilon,g,f,s);(m_1,\,n_1),\ldots,(m_r,\,n_r)\big\}
	\]
determine $M$ up to equivariant diffeomorphisms, subject to certain conditions. Conversely, any such set of invariants can be realized as a closed 3d manifold with an effective $S^1$-action.
\end{OR}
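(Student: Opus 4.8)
The plan is to reduce the global classification to a local-to-global assembly over the orbit space, following the pattern Seifert used for fixed-point-free actions but now accommodating fixed points and special exceptional orbits. First I would invoke the differentiable slice theorem to linearize the action near every orbit. Since $\dim M = 3$, the slice to an orbit is at most $2$-dimensional, and effectiveness forces exactly four local models: a \emph{principal} orbit with trivial isotropy and invariant neighborhood $S^1\times D^2$; an \emph{exceptional} orbit with isotropy $\mathbb{Z}_m$ acting on the slice by rotation, whose neighborhood is a fibered solid torus; a \emph{fixed} point, where $S^1$ rotates a transverse $2$-plane and fixes a line; and a \emph{special exceptional} orbit, where a $\mathbb{Z}_2$ isotropy acts on the slice by a reflection. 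From the last two models one reads off that the fixed set $F$ and the special-exceptional set $SE$ are each disjoint unions of embedded circles.

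Next I would analyze the orbit map $\pi\colon M\to M^*$ to the orbit space $M^*=M/S^1$. The local models show that $M^*$ is a compact $2$-manifold whose boundary circles are exactly the images $\pi(F)$ and $\pi(SE)$; this produces the orientability type $\epsilon$, the genus $g$, and the counts $f=\#\pi(F)$, $s=\#\pi(SE)$. I would then excise small invariant tubular neighborhoods of the exceptional orbits and of $F\cup SE$; writing $M_0^*$ for the resulting surface-with-boundary and $M_0=\pi^{-1}(M_0^*)$, the restriction $\pi\colon M_0\to M_0^*$ is a free $S^1$-action, hence a principal $S^1$-bundle. Because a compact surface with nonempty boundary is homotopy equivalent to a wedge of circles, we have $H^2(M_0^*;\mathbb{Z})=0$, so this bundle is trivial and I may fix an equivariant section $\sigma$ of $\pi$ over $M_0^*$.

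The reconstruction step then re-glues the excised pieces. Each exceptional orbit is restored by attaching a fibered solid torus along a boundary torus of $M_0$; relative to $\sigma$ the gluing is recorded by a Seifert pair $(m_i,n_i)$ with $\gcd(m_i,n_i)=1$, normalized to $0<n_i<m_i$. The neighborhoods of the fixed and special-exceptional circles are attached by their standard models and carry no further numerical data. Finally, the single integer $b$ measures the Euler-number obstruction to extending $\sigma$ across all the excised disks simultaneously; changing $\sigma$ by a gauge transformation alters $b$ and the pairs $(m_i,n_i)$ by the elementary Seifert moves, which is precisely why the invariants are only well defined subject to the stated normalization, and why $b$ can be absorbed into the boundary data whenever $f+s>0$.

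The main obstacle is the completeness half: showing that two $S^1$-manifolds with identical normalized invariants are equivariantly diffeomorphic. Here one must check that an abstract matching of the quotient surfaces, the exceptional-orbit data, and the boundary circles can always be promoted to an equivariant diffeomorphism of the total spaces, by extending a bundle isomorphism over $M_0$ across the glued-in solid tori and standard collars. The delicate points are tracking orientation conventions on the slices when $\epsilon$ records nonorientability, and verifying that the residual freedom in choosing $\sigma$ acts on the tuple $\big(b;(m_i,n_i)\big)$ exactly through the normalization moves, so that the normalized data is a genuine invariant. Realizability is the comparatively routine converse: given any admissible tuple, assemble the fibered solid tori, the standard fixed and special-exceptional pieces, and the trivial bundle over the punctured surface by the prescribed gluings, and verify that the result is a closed $3$-manifold carrying the advertised effective action.
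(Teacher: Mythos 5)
Your architecture is the standard Seifert--Raymond--Orlik one, and it is the same route the paper itself relies on: the paper quotes this theorem from \cite{Ra68,OR68} rather than reproving it, and its remarks following the labelled version of the classification sketch precisely your uniqueness step (lift the matching of non-principal orbit data, extend over tubular neighbourhoods, then over the principal part using local cross sections, with $b=\bar b$ guaranteeing the global extension). So the plan is sound, but two concrete points in your write-up need repair.

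First, the special exceptional set $SE$ is not a disjoint union of embedded circles. The isotropy $\mathbb{Z}_2$ acts on the $2$-dimensional slice by a reflection, whose fixed set in the slice is a line; hence the set of points with stabilizer $\mathbb{Z}_2$ is $2$-dimensional near such an orbit, and each component of $SE$ is a torus $S^1/\mathbb{Z}_2\times S^1$ with equivariant neighbourhood $\text{M\"{o}b}\times S^1$. What is a circle is only the image $\pi(SE_j)\subset\partial M^*$. Your reconstruction and uniqueness steps must therefore attach and match copies of $\text{M\"{o}b}\times S^1$, not tubular neighbourhoods of circles; as written, ``the special-exceptional circles \ldots\ attached by their standard models'' would install the wrong local piece. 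Second, your triviality claim for the principal bundle $M_0\to M_0^*$ fails exactly when $r=f=s=0$: then nothing is excised, $M_0^*=M^*$ is closed, and $H^2(M^*;\mathbb{Z})$ is $\mathbb{Z}$ (orientable) or $\mathbb{Z}_2$ (non-orientable), so the bundle need not admit a section. One must also delete a principal-orbit neighbourhood and define $b$ as the obstruction to closing the section over that last disk; this is where the dichotomy $b\in\mathbb{Z}$ versus $b\in\mathbb{Z}_2$ for $\epsilon=n$, and the collapse $b=0$ when $f+s>0$ or when $\epsilon=n$ and some $m_i=2$, has to be extracted. Your sketch names this bookkeeping as the delicate point but does not carry it out, and it is exactly the content of the ``certain conditions'' in the statement.
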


The first goal of this paper is to further generalize the Orlik-Raymond Classification Theorem to circle actions on compact 3d manifolds, allowing boundaries. By the classification of circle actions on closed 2d manifolds, those boundaries have to be tori $\mathbb{T}$, spheres $S^2$, projective planes $\mathbb{R}P^2$ or Klein bottles $K$. Our approach starts with a careful discussion on the equivariant neighbourhoods of non-principal orbits and boundaries. We find that the underlying orbit spaces are 2d orbifolds with boundaries, and possibly with corners. In order to generalize the Orlik-Raymond Classification Theorem to 3d circle-manifolds with boundaries, we will cap off the boundaries by standard fillings and then pass to the Orlik-Raymond case of no boundary. As a result, let $t$ be the number of torus boundaries and $\mathcal{G}$ be a union of labelled cycle graphs to keep track of the boundary types $S^2,\,\mathbb{R}P^2,\,K$, we get:

\newtheorem*{classifyB}{\textup{\textbf{Theorem \ref{classifyB}}}}
\begin{classifyB}
Let the circle group $S^1$ act effectively and smoothly on a compact, connected 3d manifold $M$, possibly with boundary. Then the orbit invariants
\[
\big\{b;(\epsilon,g,f,s,t);(m_1,\,n_1),\ldots,(m_r,\,n_r);\mathcal{G}\big\}
\]
consisting of numeric data and a collection of labelled cycle graphs, 
determine $M$ up to equivariant diffeomorphisms, subject to certain conditions. Conversely, any such set of invariants can be realized as a 3d manifold with an effective $S^1$-action.
\end{classifyB}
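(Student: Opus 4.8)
The plan is to reduce the boundary case to the already-established Orlik--Raymond classification by equivariantly capping off every boundary component, so I would first need a complete local picture of the action. I would begin with the slice theorem to classify, up to equivariant diffeomorphism, the invariant tubular neighbourhoods of the non-principal orbits (exceptional, special exceptional, and fixed) and, crucially, of each boundary component. Combined with the classification of circle actions on closed surfaces, this shows that $\partial M$ is a disjoint union of tori $\mathbb{T}$, spheres $S^2$, projective planes $\mathbb{R}P^2$ and Klein bottles $K$, and it exhibits the orbit space $M/S^1$ as a compact $2$-orbifold whose boundary is a union of circles and arcs, possibly meeting at corners. The arcs and corners are labelled by the orbit type from which they arise, and the torus boundaries contribute the numeric parameter $t$.

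Next I would make the capping precise. For each torus boundary I would glue in a standard equivariant solid torus, exactly as in the Seifert/Orlik--Raymond filling of a fibred torus; this is the operation recorded by $t$. For the remaining $S^2$, $\mathbb{R}P^2$ and $K$ boundaries, which correspond to the boundary arcs of the orbit $2$-orbifold, I would fix a standard model filling for each type and glue it in along the corresponding arc. The combinatorics of how these arcs and corners are arranged cyclically around each orbifold boundary circle is precisely the datum encoded by the labelled cycle graphs $\mathcal{G}$: each boundary circle of $M/S^1$ becomes one cycle graph whose vertices and edges are labelled by the fixed/special-exceptional arcs and the $S^2$, $\mathbb{R}P^2$, $K$ segments between them. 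After all fillings the result $\widehat{M}$ is a closed, connected $3$-manifold carrying an effective $S^1$-action, to which the Orlik--Raymond theorem applies and returns the invariants $\{b;(\epsilon,g,f,s);(m_1,n_1),\ldots,(m_r,n_r)\}$.

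For the direct statement I would then prove that the fillings are canonical: each standard model is unique up to equivariant diffeomorphism, and an equivariant diffeomorphism of two capped manifolds respecting the filling loci descends to one of the original manifolds. This identifies $M$ up to equivariant diffeomorphism with the data of $\widehat{M}$ (hence the Orlik--Raymond invariants) together with $t$ and $\mathcal{G}$, subject to the same normalising conditions as in the closed case plus the bookkeeping forced by removing the fillings. For the converse (realisation) I would start from the Orlik--Raymond manifold determined by $\{b;(\epsilon,g,f,s);(m_i,n_i)\}$ and excise the standard neighbourhoods dictated by $t$ and $\mathcal{G}$, checking that the result is a well-defined $3$-manifold with the prescribed boundary and action.

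The main obstacle I anticipate is the uniqueness and well-definedness of the non-torus fillings and their interaction with the corner structure: I must show that the labelled cycle graph $\mathcal{G}$ records exactly the equivariant gluing data of the $S^2$, $\mathbb{R}P^2$ and $K$ caps, no more and no less, so that different graphs give inequivalent manifolds while the same graph always reconstructs the same manifold. Verifying that the corners where arcs of different orbit type meet admit a standard equivariant model, and that these models are compatible with both the torus fillings and the global orientation/genus data, is the delicate step on which the faithfulness of the invariant $\mathcal{G}$ rests.
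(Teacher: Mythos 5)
Your overall strategy --- analyse the equivariant neighbourhoods via the slice theorem, exhibit $M/S^1$ as a $2$-orbifold with corners whose boundary combinatorics is recorded by labelled cycle graphs, cap off the boundary to reduce to the Orlik--Raymond theorem, and then restrict back --- is exactly the route the paper takes. However, there is a genuine gap in your capping step. You propose to ``fix a standard model filling for each type and glue it in'', treating $S^2$, $\mathbb{R}P^2$ and $K$ on the same footing. This fails for $\mathbb{R}P^2$: for any compact $3$-manifold $W$ one has $\chi(\partial W)=2\chi(W)$, so a closed surface of odd Euler characteristic cannot bound; since $\chi(\mathbb{R}P^2)=1$, there is no compact $3$-manifold (equivariant or otherwise) whose boundary is a single $\mathbb{R}P^2$, and hence no individual standard filling for such a boundary component. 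The paper circumvents this by first proving a parity result (Corollary \ref{cor:Parity}): the cycle structure of $\mathcal{G}$ forces the number $r_p$ of $\mathbb{R}P^2$ boundary components to be even, so these boundaries can be capped off \emph{in pairs} by gluing in $\mathbb{R}P^2\times I$. This pairing is an essential structural input missing from your proposal; it also introduces a choice (the pairing is arbitrary, so the capped manifold $N$ is not unique up to $S^1$-diffeomorphism), which the paper handles by performing the capping simultaneously and compatibly on $M$ and $\bar{M}$ via the given labelled orbifold diffeomorphism, rather than by claiming canonicity of the filling.

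A secondary point you gloss over is the framing ambiguity of the torus caps: gluing a solid torus $S^1\times D^2$ to a principal-orbit torus boundary is not canonical, and the $\mathbb{Z}$ of Dehn-twisted gluings changes the Euler number $b$ of the resulting closed manifold. When $M$ has neither fixed points nor special exceptional orbits (so that $b$ is a genuine integer invariant of the capped manifold), this must be normalised; the paper does so by re-gluing one chosen torus cap with the twisted diffeomorphism $Id_{\tilde{b}}$ or $Id_{-\tilde{b}}$ to force $b=0$. Without this normalisation, or some equivalent convention, the closed manifold your construction assigns to $M$ is not well defined, and the claimed reduction to the Orlik--Raymond invariants does not go through in the all-torus-boundary case.
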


Using the Orlik-Raymond Theorem, one can compute the fundamental groups, ordinary homology and cohomology with $\mathbb{Z}$ or $\mathbb{Z}_p$ coefficients for closed 3d $S^1$-manifolds, (cf.\cite{JN83, BHZZ00, BLPZ03, BZ03}). Using the generalized classification Theorem \ref{classifyB}, one can also compute those non-equivariant topological invariants to 3d $S^1$-manifolds with boundaries. But in this paper, we are more interested in equivariant topological invariants.

So the second goal of this paper is to describe the $\mathbb{Q}$-coefficient equivariant cohomology of any compact 3d manifold $M$ with circle action. Our main strategy is to apply the equivariant Mayer-Vietoris sequence to a decomposition of the manifold $M$ into a fixed-point-free part and a neighbourhood of the fixed-point set. Then we get 

\newtheorem*{equivCohom1}{\textup{\textbf{Theorem \ref{equivCohom1}}}}
\begin{equivCohom1}
Let $M$ be a compact connected 3d manifold(possibly with boundary) with an effective $S^1$-action, and $F$ be its fixed-point set(possibly empty), then there is a short exact sequence of cohomology groups in $\mathbb{Q}$ coefficients: 
\[
0\rightarrow H^*_{S^1}(M) \rightarrow H^*(M/{S^1}) \oplus 
\Big( \mathbb{Q}[u]\otimes H^*(F) \Big) \rightarrow   
H^*(F) \rightarrow 0
\]
\end{equivCohom1}
Using this theorem, we can describe the ring, module and vector-space structures of the equivariant cohomology $H^*_{S^1}(M)$ in details. Furthermore, we will calculate equivariant Betti numbers and Poincar\'{e} series, and discuss a numeric condition for equivariant formality.

The following is a brief summary of each section:

In Section 2, we recall the folklore classification of circle actions on 2d manifolds and the Orlik-Raymond classification of circle actions on closed 3d manifolds in terms of numeric data.

In Section 3, we generalize the classification theorem to circle actions on 3d manifolds possibly with boundaries.

In Section 4, we describe the equivariant cohomology of 3d manifolds with circle actions, then calculate the equivariant Betti numbers and Poincar\'{e} series, and discuss equivariant formality.

\vskip 20pt
\section{$S^1$-actions on 2d manifolds and closed 3d manifolds}
\vskip 15pt

In this section, we will recall the classification of effective $S^1$-actions on manifolds in dimension 2 and 3. All these results are well known, and can be found in greater details from the original papers by Orlik and Raymond \cite{Ra68,OR68} or the notes and books \cite{Or72,JN83,Au04,Ni05}.

\subsection{Some basic facts about group actions on manifolds}
Throughout the paper, we always assume that a manifold $M$ is compact, smooth and connected, and a group $G$ is compact, unless otherwise mentioned. For convenience, we will denote a $G$-action on $M$ as $G \curvearrowright M$. The quotient $M/G$ is called the \textbf{orbit space} of the $G$-action on $M$. For any point $x$ in $M$, let $G_x=\{g\in G\mid g\cdot x=x\}$ be its stabilizer. We denote $M^G=\{x\in M\mid G_x=G\}$ for the set of fixed points. If $G_x=G$ for every $x\in M$, we say that the $G$-action on $M$ is \textbf{trivial}. If $G_x=\{1\}$ for every $x\in M$, we say that the $G$-action on $M$ is \textbf{free}. If the intersection $\cap_{x\in M}G_x=\{1\}$, we say that the $G$-action on $M$ is \textbf{effective}. Throughout this paper, group actions are usually assumed to be effective, unless otherwise mentioned.

For any orbit $G\cdot x$, let $V_x$ be an orthogonal complement of $T_x(G\cdot x)$ in $T_x M$. The infinitesimal action of $G_x$ on $T_x M$ gives a linear \textbf{isotropy representation} $G_x \curvearrowright V_x$. Then the normal bundle of the orbit $G\cdot x$ can be written as 
\[
G\times_{G_x}V_x=\big\{[g,v] \mid (g,v) \sim (gh,h^{-1}v) \mbox{ for any } h\in G\big\}
\] 
with a canonical $G$-action induced from the $G$-principal bundle $G\times V_x$.

The following theorem, proved by Koszul \cite{Ko53}, equivariantly identifies the normal bundle with the tubular neighbourhood of an orbit $G\cdot x$.

\begin{thm}[The slice theorem, \cite{Ko53}]
There exists an equivariant exponential map
\[
 \exp: G\times_{G_x}V \longrightarrow M
\]
which is an equivariant diffeomorphism from an open neighbourhood of the zero section $G\times_{G_x} \{0\}$ in $G\times_{G_x}V_x$ to an equivariant neighbourhood of $G\cdot x$ in $M$.
\end{thm}

Thus, an equivariant neighbourhood of the orbit $G\cdot x$ can be specified in terms of the stabilizer $G_x$ and the isotropy representation of $G_x$ on the normal vector space. 

Similar to the ordinary non-equivariant case, the equivariant identification between normal bundles and neighbourhoods generalizes beyond single orbit to submanifold and boundary, cf. Kankaanrinta \cite{Ka07}.

\begin{thm}[Equivariant tubular neighbourhood,  \cite{Ka07}]
Let $N$ be a closed $G$-invariant submanifold of $M$, and $E$ be the normal $G$-vector bundle of $N$. There exists an equivariant exponential map
\[
 \exp: E \longrightarrow M
\]
which is an equivariant diffeomorphism from an open neighbourhood of the zero section in $E$ to an equivariant tubular neighbourhood of $N$ in $M$.
\end{thm}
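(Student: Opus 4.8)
The plan is to reduce the statement to the ordinary non-equivariant tubular neighbourhood theorem by working with a \emph{$G$-invariant} Riemannian metric, whose exponential map is automatically equivariant. First I would average an arbitrary Riemannian metric on $M$ over the Haar measure of the compact group $G$ to obtain a $G$-invariant metric $\langle\cdot,\cdot\rangle$; with respect to this metric every $g\in G$ acts as an isometry. Identifying the normal bundle $E$ of $N$ with the orthogonal complement of $TN$ inside $TM|_N$, the $G$-invariance of both $N$ and the metric makes $E$ a $G$-vector bundle, compatibly with the canonical $G$-action described before the slice theorem.

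Next I would consider the Riemannian exponential map and restrict it to $E$. The crucial point is equivariance: since each $g\in G$ is an isometry, it carries geodesics to geodesics and preserves $N$, so the geodesic through $\exp(v)$ is taken to the geodesic through $\exp(dg\cdot v)$, giving $\exp(dg\cdot v)=g\cdot\exp(v)$ wherever $\exp$ is defined. Thus $\exp\colon E\to M$ is an equivariant smooth map whose restriction to the zero section is the inclusion $N\hookrightarrow M$, and whose differential along the zero section is an isomorphism onto $T M|_N=TN\oplus E$. (When $N$ is a single orbit $G\cdot x$, this recovers exactly the equivariant exponential map of the slice theorem.)

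It then remains to produce a $G$-invariant open neighbourhood of the zero section on which $\exp$ is an equivariant diffeomorphism onto an open neighbourhood of $N$. By the ordinary tubular neighbourhood theorem, $\exp$ is a local diffeomorphism near the zero section and is injective on normal vectors of sufficiently small length. Since $M$ is compact, $N$ is compact, so the permissible length bound can be taken to be a uniform constant $\varepsilon>0$. The set $E_\varepsilon=\{v\in E:\|v\|<\varepsilon\}$ is then $G$-invariant, precisely because the norm $\|\cdot\|$ coming from the invariant metric is preserved by $G$; its image $\exp(E_\varepsilon)$ is the desired equivariant tubular neighbourhood of $N$, and $\exp|_{E_\varepsilon}$ is an equivariant diffeomorphism onto it.

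I expect the main obstacle to lie in the last step: passing from the pointwise and local statements (that $\exp$ is a local diffeomorphism and is locally injective near each point of $N$) to a single $G$-invariant domain on which $\exp$ is globally injective. The averaging trick makes \emph{invariance} of the candidate domain automatic once the cutoff is phrased in terms of the invariant norm, so the real content is the uniform choice of cutoff, which under the standing compactness hypothesis follows from a standard continuity-and-compactness argument applied to the injectivity radius along $N$. For noncompact $N$ one would instead need a $G$-invariant positive function $\varepsilon\colon N\to\mathbb{R}_{>0}$ in place of the constant, obtained by averaging a smooth bound over $G$; but in the present setting all manifolds are assumed compact, so the constant suffices.
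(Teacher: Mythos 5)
Your proof is correct, and it is the standard argument: the paper itself does not prove this statement but cites it to Kankaanrinta \cite{Ka07}, so there is no internal proof to compare against. For the compact groups assumed throughout this paper, your route --- average the metric over Haar measure, observe that isometries intertwine the Riemannian exponential, and cut off at a uniform $\varepsilon$ measured in the invariant norm so that the domain is automatically $G$-invariant --- is exactly how the result is usually established, and your identification of the only real issue (upgrading local injectivity near the zero section to global injectivity on a uniform disk bundle, which follows from compactness of $N$) is on target. The one thing worth noting is that the cited reference proves the theorem for \emph{proper} actions of possibly non-compact Lie groups, where the Haar-averaging step is unavailable and must be replaced by more delicate constructions of invariant metrics and slices; your argument deliberately trades that generality for the simpler compact-group setting, which is all this paper needs. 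A minor caveat: since the ambient $M$ is allowed to have boundary elsewhere in the paper, your geodesic argument implicitly requires $N$ to sit in the interior of $M$ (otherwise geodesics may exit through $\partial M$ before time $\varepsilon$); the boundary situation is handled separately by the equivariant collaring theorem stated immediately afterwards.
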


\begin{thm}[Equivariant collaring neighbourhood, \cite{Ka07}]
Suppose a compact manifold $M$ has a $G$-action that extends compatibly to its boundary $\partial M$. There exists an equivariant exponential map
\[
 \exp: \partial M \times [0,\infty) \longrightarrow M
\]
which is an equivariant diffeomorphism from an open neighbourhood of the boundary $\partial M$ in $\partial M \times [0,\infty)$ to an equivariant collaring neighbourhood of $\partial M$ in $M$. 
\end{thm}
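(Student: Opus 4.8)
The plan is to mimic the classical (non-equivariant) collar construction but to run it with a $G$-invariant Riemannian metric, so that every object produced along the way is automatically $G$-equivariant. First I would use the compactness of $G$ to average an arbitrary Riemannian metric on $M$ over the Haar measure, obtaining a $G$-invariant metric for which every $g \in G$ acts as an isometry preserving $\partial M$. With respect to this metric the inward-pointing unit normal field $\nu$ along $\partial M$ is canonically defined, and because $G$ acts by isometries carrying $\partial M$ to itself (hence inward directions to inward directions), $\nu$ is $G$-equivariant: $dg(\nu_x) = \nu_{g \cdot x}$ for all $x \in \partial M$ and $g \in G$.

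Next I would define the candidate map $\exp \colon \partial M \times [0,\infty) \to M$ by sending $(x,t)$ to the point obtained by following, for parameter $t$, the geodesic that starts at $x$ with initial velocity $\nu_x$ (equivalently, the time-$t$ flow of an inward-pointing extension of $\nu$). Letting $G$ act on $\partial M \times [0,\infty)$ through the first factor only, trivially on $[0,\infty)$, equivariance is immediate: since $g$ is an isometry it carries the geodesic through $x$ with velocity $\nu_x$ to the geodesic through $g\cdot x$ with velocity $dg(\nu_x) = \nu_{g\cdot x}$, whence $\exp(g\cdot x, t) = g\cdot \exp(x,t)$.

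The remaining, and most technical, step is to show that $\exp$ restricts to a diffeomorphism from a neighbourhood of $\partial M \times \{0\}$ onto a neighbourhood of $\partial M$ in $M$; here lies the main obstacle. Along $\partial M \times \{0\}$ the differential of $\exp$ is an isomorphism (the identity on $T\partial M$, sending $\partial_t$ to $\nu$), so the inverse function theorem supplies a local collar at each point; the work is to patch these into a single embedded collar of uniform positive width. I would invoke compactness of $\partial M$ to extract a uniform $\varepsilon > 0$ and a standard argument ruling out self-intersections of the normal exponential on $\partial M \times [0,\varepsilon)$ after shrinking $\varepsilon$. Crucially, since the construction is $G$-equivariant and $G$ is compact, this $\varepsilon$ may be chosen $G$-invariantly (constant along orbits, then minimized), so the resulting collar is $G$-invariant and the restricted map is the desired equivariant diffeomorphism.

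Alternatively, I could bypass the direct analysis by forming the double $DM = M \cup_{\partial M} M$, to which the $G$-action extends so that $\partial M$ becomes a closed $G$-invariant hypersurface. Because $G$ preserves each of the two copies of $M$, it preserves the coorientation of $\partial M$, so the normal $G$-line-bundle is equivariantly trivial, isomorphic to $\partial M \times \mathbb{R}$ with trivial action on the fibre; applying the equivariant tubular neighbourhood theorem stated above to $\partial M \subset DM$ and restricting to the half lying in $M$ yields the collar directly. I expect the honest difficulty in either route to be the same: ensuring that the local-to-global passage producing an embedded collar does not break equivariance, which the invariant metric (respectively the equivariant tubular neighbourhood theorem) is precisely designed to guarantee.
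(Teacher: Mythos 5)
Your proposal is correct, but note that the paper does not prove this statement at all: it is quoted as a known result and attributed to Kankaanrinta \cite{Ka07}, whose theorem is proved in the harder setting of \emph{proper} actions of possibly non-compact Lie groups, where averaging over Haar measure is unavailable and invariant metrics must be built by hand. In the situation actually needed here ($G=S^1$ compact, $M$ compact), your first route --- average a metric over Haar measure, take the inward unit normal $\nu$ (equivariant because isometries preserve $\partial M$ and hence inward directions), define $\exp(x,t)$ by the normal geodesic flow, check equivariance from the fact that isometries carry geodesics to geodesics, and get an embedded collar of uniform width from the inverse function theorem plus compactness of $\partial M$ --- is exactly the standard proof and is complete. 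One small simplification: you do not need to arrange $\varepsilon$ to be ``$G$-invariant'' separately; since $G$ acts trivially on the $[0,\infty)$ factor, the set $\partial M\times[0,\varepsilon)$ is automatically invariant and its image under the equivariant map $\exp$ is automatically an invariant collar. Your alternative route via the double $DM$ is circular as stated, since the usual construction of a smooth structure on $DM$ already presupposes a collar of $\partial M$ (equivariant or not); it can be repaired, but the direct geodesic argument is cleaner and is the one to keep.
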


Since we only consider $S^1$-actions, there are three types of stabilizers, namely $\{1\},\, \mathbb{Z}/m, \,S^1$, whose resulting orbits will be called \textbf{principal}, \textbf{exceptional} and \textbf{singular} respectively. 
\begin{center}
\begin{tabular}{c c c c}
\toprule
 & Principal orbit & Exceptional orbit & Singular orbit\\
\midrule
Stabilizer $S^1_x$& $\{1\}$ & $\mathbb{Z}_m = \{e^{\frac{2\pi k i}{m} }, k = 1,2,\ldots,m\}$ & $S^1$\\
\midrule
Orbit $S^1\cdot x$ & $S^1$ & $S^1/\mathbb{Z}_m$ & $pt $\\
\bottomrule
\end{tabular}
\end{center}
Intuitively, exceptional orbits $S^1/\mathbb{Z}_m$ are shorter than regular orbits $S^1$. Singular orbits $S^1/S^1=pt$ are exactly the fixed points of the $S^1$-action.

Direct applications of the Slice Theorem, together with the compactness of $M$, leads to the following facts (cf. Audin \cite{Au04} Sec I.2):

\begin{fact}
If $S^1$ acts on a compact, connected manifold $M$, then
\begin{itemize}
\item
For any subgroup $H$ of $S^1$, the set $M_{(H)}=\{ x \in M \mid S^1_x = H\}$ of points with stabilizer $H$ is a submanifold of $M$. Moreover, $S^1/H$ acts freely on $M_{(H)}$. 
\item
There is a unique subgroup $H_0$ of $S^1$, such that the set $M_{(H_0)}$ is open and dense in $M$.
\item
The $S^1$-action on $M$ is effective if and only if the $H_0$ in the previous statement is the identity group $\{1\}$.
\item
If the $S^1$-action on $M$ is effective, then for every $x \in M$, the isotropy representation $S^1_x \curvearrowright V_x$ is also effective.
\end{itemize}
\end{fact}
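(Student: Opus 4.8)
The guiding principle throughout is that every local question can be transported, via the Slice Theorem, to the linear model $S^1\times_{S^1_x}V_x$, and that the abelianness of $S^1$ makes this model especially transparent. Since conjugation in $S^1$ is trivial, all points of a single orbit share the same stabilizer, and the stabilizer of a class $[g,v]$ in the model is $g\,H_v\,g^{-1}=H_v$, where $H_v:=\{h\in S^1_x\mid h\cdot v=v\}$ is independent of $g$. I would establish all four assertions by combining this observation with one global input at the crucial place.

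For the first assertion I would fix $x_0\in M_{(H)}$ with $H=S^1_{x_0}$ and identify $M_{(H)}$ inside the slice $S^1\times_H V_{x_0}$. By the observation above, $[g,v]$ has stabilizer exactly $H$ precisely when $H_v=H$, that is, when $v$ lies in the fixed subspace $V_{x_0}^H$. Hence locally $M_{(H)}\cong S^1\times_H V_{x_0}^H=(S^1/H)\times V_{x_0}^H$, since $H$ acts trivially on $V_{x_0}^H$. This exhibits $M_{(H)}$ as a smooth submanifold on which the residual group $S^1/H$ acts by translation in the first factor, hence freely.

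For the second assertion I would set $H_0:=\bigcap_{x\in M}S^1_x$, a closed subgroup, and note that the action descends to an effective action of $S^1/H_0$ for which $M_{(H_0)}$ is exactly the free locus. If $H_0=S^1$ the action is trivial and $M_{(H_0)}=M$; otherwise $S^1/H_0$ is again a circle acting effectively, and it suffices to show its free locus is open and dense. Openness is immediate from the slice, since a free orbit has a free equivariant neighbourhood. Density is the heart of the matter: assuming the free locus missed some nonempty open set, I would use the finiteness of the number of orbit types (a standard consequence of the Slice Theorem and compactness) together with a Baire-category argument to produce an open set on which the stabilizer is a fixed nontrivial subgroup $H_1$; then every $h\in H_1$ fixes this open set, so the closed submanifold $M^h$ has nonempty interior and, as $M$ is connected, must equal $M$, forcing $H_1\subseteq\bigcap_x S^1_x=\{1\}$, a contradiction. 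Uniqueness follows because two distinct strata $M_{(H)}$ and $M_{(H')}$ are disjoint, so they cannot both be open and dense. The third assertion is then immediate, since the subgroup $H_0$ realizing the open dense stratum is by construction $\bigcap_x S^1_x$, which is trivial exactly when the action is effective.

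For the fourth assertion I would argue by contradiction in the effective case: if some $h\in S^1_x$ acted trivially on $V_x$, then in the slice model one computes $h\cdot[g,v]=[hg,v]=[g,h\cdot v]=[g,v]$ using abelianness, so $h$ acts trivially on an entire equivariant neighbourhood of the orbit; the connectedness argument of the previous paragraph again gives $M^h=M$ and hence $h=1$ by effectiveness. I expect the density claim in the second assertion to be the main obstacle, as it is the one place where the argument cannot rest on a single application of the Slice Theorem but must combine compactness (finiteness of orbit types), the submanifold structure of fixed-point sets, and the connectedness of $M$.
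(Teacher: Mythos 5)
Your proof is correct, and it follows exactly the route the paper indicates: the paper offers no proof of this Fact, stating only that it follows from ``direct applications of the Slice Theorem, together with the compactness of $M$'' and citing Audin, Sec.~I.2. Your write-up supplies precisely those details --- the stabilizer computation in the linear model $S^1\times_{S^1_x}V_x$ (simplified by abelianness), finiteness of orbit types plus a category argument for density of the principal stratum, and the connectedness argument showing a fixed-point set with nonempty interior is all of $M$ --- so it fills in the paper's intended argument rather than departing from it.
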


Furthermore, based on the Theorem of equivariant tubular neighbourhood, the classification of effective $S^1$-manifolds at low dimensions can be done by listing all the possible equivariant neighbourhoods and the obstructions of patching them together to form a manifold. In dimension 1, there is only one compact effective $S^1$-manifold, the circle $S^1$ itself with the rotating action. In dimension 2 and 3, this approach is also successful, as we will recall in the next subsections.

\subsection{$S^1$-actions on 2d manifolds}
We begin by listing all the possible equivariant tubular neighbourhoods of orbits, which are the same as equivariant normal bundles according to the Slice Theorem. Then we try to patch these neighbourhoods together. The survey of this topic follows closely from Audin (\cite{Au04} Sec I.3).

Notice that in dimension 2, for an exceptional orbit $S^1/\mathbb{Z}_m$, its isotropic representation is of dimension 1. But there is only one such effective representation, namely the reflection $\mathbb{Z}_2 \overset{\text{reflect}}{\curvearrowright} \mathbb{R}$, which also forces the exceptional orbit to be $S^1/\mathbb{Z}_2$.

As for a singular orbit, i.e. a fixed point with stabilizer $S^1$, its isotropic representation is of dimension 2. The only effective $S^1$-representation of real dimension 2 is the rotation $S^1\overset{\text{rotate}}{\curvearrowright} \mathbb{C}$.

So we can summarize the list of all possible equivariant tubular neighbourhoods:

\begin{center}
\begin{tabular}{c c c c}
\toprule
 & Principal orbit & Exceptional orbit & Singular orbit\\
\midrule
Stabilizer $S^1_x$& $\{1\}$ & $\mathbb{Z}_2$ & $S^1$\\
\midrule
Orbit $S^1\cdot x$ & $S^1$ & $S^1/\mathbb{Z}_2$ & $pt $\\
\midrule
Isotropic repr & $\{1\}\curvearrowright \mathbb{R}$ &
$\mathbb{Z}_2 \overset{\text{reflect}}{\curvearrowright} \mathbb{R}$ &
$S^1 \overset{\text{rotate}}{\curvearrowright} \mathbb{C}$\\
\midrule
\multirow{3}{*} {Equiv nbhd} & $S^1 \times (-1,1) $ & $S^1 \times_{\mathbb{Z}_2} (-1,1)$ & $D=\{(x,y)\mid x^2+y^2<1\}$ \\
$U$ & \includegraphics[height = 1cm]{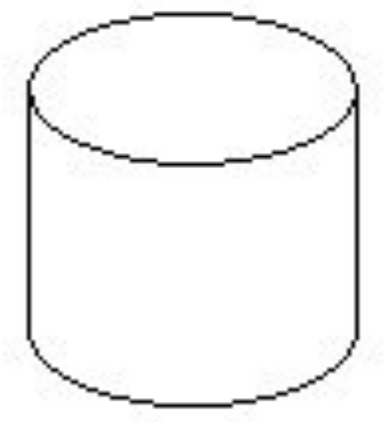}
& \includegraphics[height = 1cm]{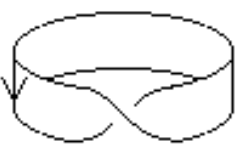} 
& \includegraphics[height = 1cm]{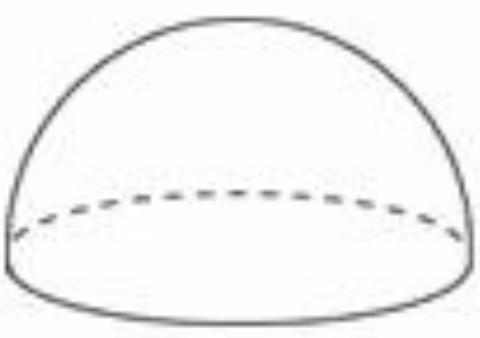}\\
& Cylinder & M\"{o}bius band & Disk\\
\midrule
Orbit space $U/S^1$ & $(-1,1)$ & $[0,1)$ & $[0,1)$\\
\bottomrule
\end{tabular}
\end{center}

To form a 2-dimensional closed manifold with effective $S^1$-action, we now just need to patch those equivariant pieces $S^1 \times (-1,1),\,S^1 \times_{\mathbb{Z}_2}(-1,1),\,D$ together by closing boundaries.

\begin{center}
\begin{tabular}{c c c c c c c c}
\includegraphics[height = 1cm]{cylinder.png} & + & \includegraphics[height = 1cm]{cylinder.png} & = & \includegraphics[height = 1cm]{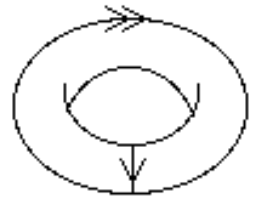} &  $\mathbb{T}^2$ & \textbf{Torus}\\
\includegraphics[height = 1cm]{hemisphere1.jpg} & + & \includegraphics[height = 1cm]{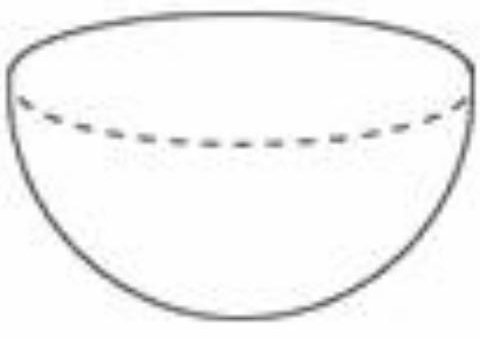}  & = & \includegraphics[height = 1cm]{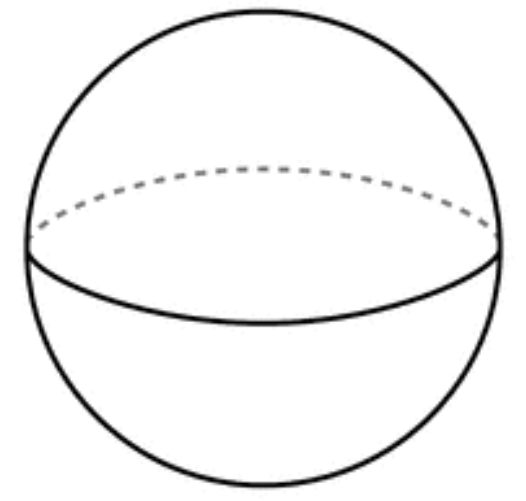} &  $S^2$ & \textbf{Sphere}\\
\includegraphics[height = 1cm]{hemisphere1.jpg} & + & \includegraphics[height = 1cm]{Mobius.png} & = & \includegraphics[height = 1cm]{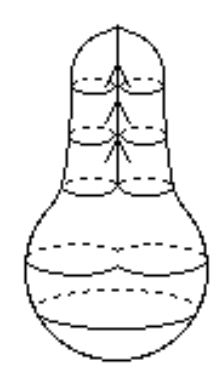} & $\mathbb{R}P^2$ & \quad \textbf{Projective plane}\\
\includegraphics[height = 1cm]{Mobius.png} & + & \includegraphics[height = 1cm]{Mobius.png} & = & \includegraphics[height = 1cm]{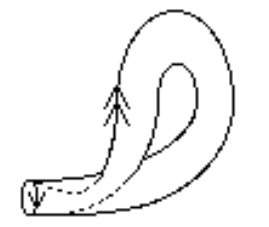} & $K$ & \textbf{Klein bottle}\\
\end{tabular}
\end{center}

In the above list of 2-dimensional closed manifolds with effective $S^1$-action, the projective plane $\mathbb{R}P^2$ and Klein bottle $K$ are non-orientable due to the existence of exceptional orbits $S^1/\mathbb{Z}_2$, but the torus $\mathbb{T}^2$ and the sphere $S^2$ are orientable.

Given a 2d compact connected effective $S^1$-manifold $M$, we can count its fixed points and exceptional orbits as $f$ and $s$ respectively. If we allow $M$ to have boundary, we can count the number of boundary components as $b$. Similarly, since the orbit space $M/S^1$ is a compact connected 1d manifold which is either a circle $S^1$ or an interval $I$, we can count the boundaries of $M/S^1$ as $\bar{b}$. Then we have the classification of the 2d compact connected effective $S^1$-manifolds:

\begin{thm}[Numeric classification of 2d $S^1$-manifolds]
Given a 2d compact connected effective $S^1$-manifold $M$, possibly with boundary, the integers $(b,f,s)$ determine $M$ up to $S^1$-diffeomorphism, and so do the integers $(\bar{b},f,s)$. 
\end{thm}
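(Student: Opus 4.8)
The plan is to study the orbit map $\pi\colon M\to M/S^1$ through the local models tabulated above, and then reconstruct $M$ from the combinatorics of its orbit space. First I would observe that, by the Slice Theorem together with the equivariant collaring theorem, every point of $M/S^1$ has a neighbourhood isomorphic to the orbit space of exactly one of the local pieces: $(-1,1)$ over a principal orbit, and $[0,1)$ over a fixed point, an exceptional orbit, or a free boundary orbit of $M$. Hence $M/S^1$ is a compact connected $1$-manifold with boundary, so it is either a circle $S^1$ or a closed interval $I$, and its boundary points are in bijection with the union of the fixed points, the exceptional orbits, and the boundary circles of $M$. This yields the relation $\bar b = f+s+b$, so the two data sets $(b,f,s)$ and $(\bar b,f,s)$ carry the same information, and it suffices to prove the statement for one of them.

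Next I would use the fact that $\bar b\in\{0,2\}$ to enumerate all possibilities. If $\bar b=0$ then $f=s=b=0$ and $M/S^1=S^1$. If $\bar b=2$ then $M/S^1=I$, and each of its two endpoints carries one of three labels: $\mathrm{F}$ (fixed point), $\mathrm{E}$ (exceptional orbit), or $\mathrm{B}$ (free boundary orbit). The unordered pairs give exactly the six cases $\{\mathrm{FF},\mathrm{FE},\mathrm{FB},\mathrm{EE},\mathrm{EB},\mathrm{BB}\}$, corresponding respectively to $S^2$, $\mathbb{R}P^2$, the disk, the Klein bottle $K$, the M\"obius band, and the annulus; together with the torus from the $\bar b=0$ case this produces seven candidate types. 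A direct check shows that the triples $(b,f,s)$ attached to these seven manifolds are pairwise distinct, so the numeric data do separate the types, and the closed cases $b=0$ recover the four manifolds of the preceding table.

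The substantive step is to show that the orbit space together with its endpoint labels determines $M$ up to equivariant diffeomorphism, i.e. that no further invariant hides in the patching. Over the interior of $M/S^1$ the action is free, so $M$ restricts there to a principal $S^1$-bundle; since the interior of $I$ is contractible and $H^2(S^1;\mathbb{Z})=0$, this bundle is trivial in either case, and in the $\bar b=0$ case this already forces $M\cong\mathbb{T}^2$. It then remains to glue the standard end pieces (disk, M\"obius band, half-cylinder) onto the two ends of the trivial cylinder $S^1\times I$ along their principal-orbit boundaries.

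I expect the main obstacle to be verifying the rigidity of this gluing: one must show that any two equivariant attaching diffeomorphisms of the boundary orbit $S^1$ are equivariantly isotopic, so that the capped-off manifold is independent of the choice. The key point is that the $S^1$-equivariant self-diffeomorphisms of a free principal orbit are precisely the rotations, which form a connected group and are therefore all isotopic to the identity; the reflections that would otherwise introduce a twist are not equivariant and do not occur. Thus no framing or orientation parameter survives the gluing — all non-orientability is already carried by the M\"obius end pieces — and $M$ is pinned down by the labelled orbit space. This is where the low dimension is decisive, in contrast with the $3$-dimensional situation, where exactly such surviving framings give rise to the Euler number $b$ and the Seifert pairs $(m_i,n_i)$.
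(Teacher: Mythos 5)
Your argument is correct, and its skeleton (local slice models, orbit space a circle or an interval, endpoint labels, seven resulting types, pairwise distinct numeric vectors) matches the route the paper takes in the surrounding exposition. The difference is in what you prove versus what the paper cites: the paper's own proof of this theorem simply takes the folklore classification into seven manifolds as already established (via the patching pictures attributed to Audin) and then verifies from the table that the $(b,f,s)$- and $(\bar b,f,s)$-vectors are pairwise distinct. You instead supply the two ingredients the paper leaves implicit. First, the relation $\bar b=b+f+s$, which you derive from the bijection between boundary points of $M/S^1$ and the fixed points, exceptional orbits and boundary circles of $M$; this makes the equivalence of the two data sets a one-line consequence rather than something read off the table. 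Second, and more substantively, the rigidity of the reconstruction: you argue that the principal part is a trivial $S^1$-bundle over the interior of the orbit space and that equivariant self-diffeomorphisms of a free orbit are rotations, hence equivariantly isotopic to the identity, so no gluing parameter survives. That is exactly the point that fails one dimension up (producing the Euler number $b$ and the Seifert pairs), and making it explicit is a genuine improvement in self-containedness over the paper's table-check. One small point worth adding for completeness: you should note why a boundary circle of $M$ is necessarily a free orbit (an effective action forces the stabilizer of a boundary orbit to act effectively on the normal half-line $[0,\infty)$, which only the trivial group does), since your endpoint label $\mathrm{B}$ tacitly assumes this.
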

\begin{proof}
We have seen that there are three 2d effective $S^1$-manifolds with boundary: cylinder, disk and M\"{o}bius band, and four 2d effective $S^1$-manifolds without boundary: torus, sphere, projective plane and Klein bottle. The counting of boundary components as $b$ is straightforward.

To compute $(f,s)$, we first do this for cylinder, $f=0,\,s=0$; disk, $f=1,\,s=0$; M\"{o}bius band, $f=0,\,s=1$. For any one of the four closed 2d $S^1$-manifolds, we just add the $(f,s)$-vectors of its two patches. 

To understand the orbit spaces, we use the standard expressions for disk, $D$; cylinder, $S^1\times [-1,1]$; sphere, $S^2$; torus, $S^1\times S^1$. Their orbit spaces are $[0,1]$, $[-1,1]$, $[-1,1]$ and $S^1$ respectively.

For the orbit spaces of the rest types of the manifolds, notice that for a compact group $G$ and a compact subgroup $H$ that acts on a space $V$,  there is a relation between the $G$-orbit space and $H$-orbit space: $(G\times_H V)/G=V/H$. So the M\"{o}bius band, projective plane and Klein bottle, written respectively as $S^1\times_{\mathbb{Z}_2} [-1,1]$, $S^2/\mathbb{Z}_2$ and $S^1\times_{\mathbb{Z}_2} S^1$ will have $S^1$-orbit spaces $[0,1]$, $[0,1]$ and $[0,\pi]$ respectively.

Here is the complete list of the numeric data $(\bar{b},b,f,s)$:
\begin{center}
\begin{tabular}{c c c c c c c}
\toprule
Manifold  & Topological & Orbit space & $\# \partial(M/S^1)$ & $\#\partial M$ & $\# M^{S^1}$ & $\# M^{\mathbb{Z}_2}$\\
$M$ & expression & $M/S^1$ & $\bar{b}$ & $b$ & $f$ & $s$\\
\midrule
Disk & $D$ & $[0,1]$ & $2$ & $1$ & $1$ & $0$\\
\midrule
Cylinder & $S^1\times [-1,1]$ & $[-1,1]$ & $2$ & $2$ & $0$ & $0$\\
\midrule
M\"{o}bius band & $S^1\times_{\mathbb{Z}_2} [-1,1]$ & $[0,1]$ & $2$ & $1$ & $0$ & $1$\\
\midrule
Sphere & $S^2$ & $[-1,1]$ & $2$ & $0$ & $2$ & $0$\\
\midrule
Projective plane & $S^2/\mathbb{Z}_2$ & $[0,1]$ & $2$ & $0$ & $1$ & $1$\\
\midrule
Torus & $S^1\times S^1$ & $S^1$ & $0$ & $0$ & $0$ & $0$\\
\midrule
Klein bottle & $S^1\times_{\mathbb{Z}_2} S^1$ & $[0,\pi]$ & $2$ & $0$ & $0$ & $2$\\
\bottomrule
\end{tabular}
\end{center}

From the above list, we see that different diffeomorphism types of 2d effective connected $S^1$-manifolds have different $(b,f,s)$-vectors, together with different $(\bar{b},f,s)$-vectors, hence the claim of the theorem follows. 
\end{proof}

\begin{rmk}
Though the integer $(b,f,s)$-vector or $(\bar{b},f,s)$-vector classifies all the 2d effective connected $S^1$-manifolds, their values are limited to the seven cases.
\end{rmk}

\begin{rmk}
For 2d effective $S^1$-manifolds without boundary, the $(f,s)$-vector is enough to give the classification.
\end{rmk}

\begin{rmk}
The author learned this folklore classification theorem from Audin's book (\cite{Au04} Sec I.3). The numeric version here is just a simple corollary.
\end{rmk}

\subsection{$S^1$-actions on closed 3d manifolds}
The idea of classifying effective $S^1$-actions in dimension 3 is the same as in dimension 2 by listing all the possible equivariant tubular neighbourhoods of non-principal orbits, and then try to patch them together. But one more dimension for the isotropic representations provides a longer list of equivariant tubular neighbourhoods.

\subsubsection{Equivariant tubular neighbourhoods of principal orbits}
For a point $x$ of principal type, its isotropy group is the identity group $\{1\}$ with a trivial isotropic representation $\{1\} \curvearrowright \mathbb{R}^2$. So an equivariant tubular neighbourhood of $S^1\cdot x$ can be written as $S^1 \times_{\{1\}} D = S^1 \times D$, with the $S^1$-action concentrating entirely on the $S^1$ component. So the orbit space of this tubular neighbourhood is $(S^1 \times D)/S^1=S^1/S^1 \times D=D$, a smooth local chart.

\subsubsection{Equivariant tubular neighbourhoods of exceptional orbits}
The union of exceptional orbits will be denoted as $E$. For an exceptional orbit $S^1/\mathbb{Z}_m$ with stabilizer $\mathbb{Z}_m = \{e^{\frac{2\pi k i}{m} }, k = 1,2,\ldots,m\}$, its isotropic representation of $\mathbb{Z}_m$ is 2-dimensional. Such a 2-dimensional effective $\mathbb{Z}_m$-representation could preserve the orientation by rotating:
\[
\mathbb{Z}_m \overset{\text{rotate}}{\curvearrowright} \mathbb{C}: 
\quad e^{\frac{2\pi k i}{m}}\circ z = (e^{\frac{2\pi k i}{m}})^n z
\]
where the orbit invariants $(m,\, n)$, also called Seifert invariants, are coprime positive integers, and $0<n<m$. The resulting equivariant tubular neighbourhood is $S^1\times_{\mathbb{Z}_m}D$, whose orbit space is an orbifold disk
\[(S^1 \times_{\mathbb{Z}_m} D)/S^1 = D/\mathbb{Z}_m\]
where the central orbifold point $pt/\mathbb{Z}_m$ corresponds to the exceptional orbit $S^1/\mathbb{Z}_m$.

\subsubsection{Equivariant tubular neighbourhoods of special exceptional orbits}
Besides rotating, a 2-dimensional effective $\mathbb{Z}_m$-representation could also reverse the orientation by reflecting:
\[
\mathbb{Z}_2 \overset{\text{reflect}}{\curvearrowright} \mathbb{R}^2: 
\quad e^{\pi i}\circ (x,y) = (-x,y)
\]
This case requires the $\mathbb{Z}_m$ to be $\mathbb{Z}_2$. Because of the reverse of orientation, we call such an orbit $S^1/\mathbb{Z}_2$ a \textbf{special exceptional orbit}. The union of all such special exceptional orbits will be denoted as $SE$.

If we use the open square $I\times I = \{(x,y) \mid -1<x,\,y<1\}$ as a neighbourhood in $\mathbb{R}^2$, an equivariant tubular neighbourhood of the special exceptional orbit $S^1/\mathbb{Z}_2$ can be written as $S^1\times_{\mathbb{Z}_2}(I\times I)$, the orbit space by $\mathbb{Z}_2$ of the solid torus $S^1\times (I\times I)$. Note that the reflection $\mathbb{Z}_2 \overset{\text{reflect}}{\curvearrowright} I\times I: 
\quad e^{\pi i}\circ (x,y) = (-x,y)$ only affects the first $I$ component, so we can split the second $I$ component out of the orbit space $S^1\times_{\mathbb{Z}_2}(I\times I)$:
\begin{eqnarray*}
S^1\times_{\mathbb{Z}_2}(I\times I) &=& S^1\times (I\times I)/(e^{i\theta},x,y)\sim(-e^{i\theta},-x,y)\\
&=& \Big(S^1\times I/(e^{i\theta},x)\sim(-e^{i\theta},-x)\Big)\times I = \text{M\"{o}b} \times I
\end{eqnarray*}
where we write M\"{o}b for short of the M\"{o}bius band $S^1\times_{\mathbb{Z}_2}I$. 

Because the set of points with stabilizer $\mathbb{Z}_2$ in the M\"{o}bius band $S^1\times_{\mathbb{Z}_2}I$ is $\text{M\"{o}b}_{(\mathbb{Z}_2)}=S^1\times_{\mathbb{Z}_2}\{0\}=S^1/\mathbb{Z}_2$ a circle, the set of points with stabilizer $\mathbb{Z}_2$ in $\text{M\"{o}b}\times I$ is $(\text{M\"{o}b}\times I)_{(\mathbb{Z}_2)}=S^1/\mathbb{Z}_2\times I$ of dimension 2. Thus, if a 3d $S^1$-manifold $M$ has a special exceptional orbit $S^1/\mathbb{Z}_2$, then the connected component of $M_{(\mathbb{Z}_2)}$ that contains this orbit will be of dimension 2 and is acted freely by $S^1/\mathbb{Z}_2$, hence has to be $S^1/\mathbb{Z}_2 \times S^1$ according the list of 2d $S^1$-manifolds.

Now an equivariant tubular neighbourhood of this torus $S^1/\mathbb{Z}_2 \times S^1$ will be a bundle of M\"{o}bius band over $S^1$, which is actually a product bundle $\text{M\"{o}b} \times S^1$, cf. Raymond \cite{Ra68}. 

Notice that the $S^1$-action concentrates entirely on the component of M\"{o}bius band, so the orbit space is $(\text{M\"{o}b} \times I)/S^1 = \text{M\"{o}b}/S^1 \times I = [0,1)\times I$ with a boundary circle $\{0\}\times S^1$.

\subsubsection{Equivariant tubular neighbourhoods of fixed points}
The set of fixed points will be denoted as $F$. For a fixed point $x$ with stabilizer $S^1$, its isotropic representation is of dimension 3. There is only one such effective $3$-dimensional $S^1$-representation $S^1 \curvearrowright \mathbb{C}\oplus\mathbb{R}$ by acting on the $\mathbb{C}$ component with rotation and acting on the $\mathbb{R}$ component trivially.

So an equivariant tubular neighbourhood of $x$ can be written as $D\times I$, with fixed point set $\{0\} \times I$, an interval. We can continue to glue along this fixed interval to form $S^1$, a connected component of the fixed point set. Now an enlarged equivariant tubular neighbourhood of the fixed circle $S^1$ is going to be a disk bundle over the $S^1$, which is actually a product bundle $D\times S^1$, cf. Raymond \cite{Ra68}. 

Notice that the $S^1$-action concentrates entirely on the $D$ component, so the orbit space is $(D\times S^1)/S^1 = D/S^1 \times S^1 = [0,1)\times S^1$ with a boundary circle $\{0\}\times S^1$.

\subsubsection{Patching: from local to global}
First, we can put all the local discussions into a list

\begin{center}
\begin{tabular}{c c c c c}
\toprule
 & Principal &  Exceptional & Special exceptional & Singular\\
\midrule
Stabilizer $S^1_x$& $\{1\}$ & $\mathbb{Z}_m$ & $\mathbb{Z}_2$& $S^1$\\
\midrule
Isotropic repr & $\{1\}\curvearrowright \mathbb{C}$ &
$\mathbb{Z}_m \overset{\text{rotate}}{\curvearrowright} \mathbb{C}$ &
$\mathbb{Z}_2 \overset{\text{reflect}}{\curvearrowright} \mathbb{R}^2$ &
$S^1 \overset{\text{rotate}}{\curvearrowright} \mathbb{C}\oplus \mathbb{R}$\\
\midrule
Orbit $S^1\cdot x$ & $S^1$ & $S^1/\mathbb{Z}_m$ & $S^1/\mathbb{Z}_2$& $pt $\\
Equiv nbhd & $S^1 \times D $ & $S^1 \times_{\mathbb{Z}_m} D$ & $\text{M\"{o}b} \times I$ & $D\times I$ \\
Orbit space & $D$ & $D/\mathbb{Z}_m$ & $[0,1)\times I$ & $[0,1)\times I$\\
\midrule
Union of orbits & $S^1$ & $S^1/\mathbb{Z}_m$ & $S^1/\mathbb{Z}_2 \times S^1$ & $pt \times S^1 $ \\
Enlarged nbhd & $S^1 \times D $ & $S^1 \times_{\mathbb{Z}_m} D$ & $\text{M\"{o}b} \times S^1$ & $D\times S^1$ \\
Enlarged orbit space & $D$ & $D/\mathbb{Z}_m$ & $[0,1)\times S^1$ & $[0,1)\times S^1$\\
\bottomrule
\end{tabular}
\end{center} 

From the above list, we see that, passing to the orbit space, the local neighbourhood of an exceptional orbit $S^1/\mathbb{Z}_m$ contributes to an orbifold neighbourhood. Both the local neighbourhoods of special exceptional orbits and the local neighbourhoods of fixed circles give rise to half closed, half open annuli with circle boundaries. 

\begin{thm}[Orbit space of closed 3d $S^1$-manifold, \cite{Ra68,OR68}]
For a compact closed 3d effective $S^1$-manifold $M$, the orbit space $M^*=M/S^1$ is a 2d orbifold surface, possibly with boundaries. The orbifold surface $M^*$ has finite number of interior orbifold points with Seifert invariants $\{(m_1,\,n_1),\ldots,(m_r,\,n_r)\}$, and boundary $\partial M^* = F\cup SE/S^1$ coming from the fixed circles and special exceptional orbits.
\end{thm}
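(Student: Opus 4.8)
The plan is to prove the theorem by assembling the local-to-global picture from the equivariant tubular neighbourhood data already tabulated. First I would invoke the Fact about the stratification of $M$ by stabilizer type: the principal stratum $M_{(\{1\})}$ is open and dense, while the non-principal orbits decompose into the exceptional orbits $E$, the special exceptional orbits $SE$, and the fixed points $F$. By compactness of $M$, there are only finitely many connected components of each non-principal stratum; in particular only finitely many exceptional orbits $S^1/\mathbb{Z}_m$ with $m>2$, yielding the finite list of Seifert invariants $\{(m_1,n_1),\ldots,(m_r,n_r)\}$. The goal is to show that the quotient map $M \to M^* = M/S^1$ realizes $M^*$ as a $2$d orbifold surface whose interior orbifold points come from $E$ and whose boundary comes from $F$ and $SE$.

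Next I would verify that $M^*$ is a $2$d topological surface away from the exceptional orbits, and carries the orbifold structure precisely at the exceptional points. The key input is the local orbit-space computation in the table: over a principal orbit the quotient of the enlarged neighbourhood $S^1\times D$ is the smooth disk $D$; over an exceptional orbit $S^1/\mathbb{Z}_m$ the quotient of $S^1\times_{\mathbb{Z}_m}D$ is the orbifold disk $D/\mathbb{Z}_m$, contributing one interior cone point of order $m$; and over both the fixed circles and the special exceptional tori the enlarged neighbourhood has quotient $[0,1)\times S^1$, a half-open annulus whose boundary circle $\{0\}\times S^1$ sits inside $\partial M^*$. Since these four local models exhaust all orbit types, and since on overlaps of principal charts the transition maps are smooth, the charts glue to give $M^*$ the structure of a compact $2$d orbifold surface with boundary. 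The boundary $\partial M^*$ is exactly the union of the circles arising from the fixed-point components and the special exceptional components, which I would record as $\partial M^* = F \cup SE/S^1$, noting that $F/S^1 = F$ since $S^1$ acts trivially there.

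The main obstacle I expect is the gluing step: showing that the locally-defined orbit-space charts patch into a single well-defined orbifold atlas, with consistent orientation and transition data on overlaps, and in particular that no unexpected identifications or extra singularities appear where different strata meet. The delicate point is that near a fixed circle or a special exceptional torus the boundary structure of $M^*$ must match smoothly with the neighbouring principal stratum, so that the boundary circles are genuine orbifold boundary components and not, say, points where the orbifold degenerates. I would handle this by appealing to the equivariant tubular and collaring neighbourhood theorems stated earlier, which guarantee that each non-principal orbit has an equivariant neighbourhood equivariantly diffeomorphic to its normal-bundle model; passing to quotients then makes the transition functions between adjacent charts into the standard smooth or orbifold maps, so the atlas is consistent. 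The finiteness of the exceptional set and the compactness of $M$ ensure the resulting orbifold surface has only finitely many cone points and finitely many boundary circles, completing the proof.
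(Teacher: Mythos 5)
Your proposal is correct and follows essentially the same route as the paper, which establishes this result by tabulating the equivariant tubular neighbourhoods of each orbit type and reading off their orbit spaces ($D$ for principal, $D/\mathbb{Z}_m$ for exceptional, $[0,1)\times S^1$ for fixed circles and special exceptional tori) before citing Orlik--Raymond for the global assembly. One small point: the restriction to exceptional orbits with $m>2$ in your finiteness step is unnecessary --- the same compactness argument gives finitely many isolated exceptional orbits for every $m\geq 2$, including the $(m,n)=(2,1)$ rotation case, which must also appear in the list of Seifert invariants.
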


To express $M^*=M/S^1$ and its orbifold points into numeric data, let's denote $\epsilon$ as the orientability of the 2d orbit space $M^*=M/S^1$, $g$ the genus, $(f,\,s)$ the numbers of circles formed from union of fixed points and union of special exceptional orbits respectively.

As for the total space $M$, after specifying the neighbourhoods of non-principal orbits, there is an obstruction integer $b$ of finding a cross section over the principal part of the orbit space. The theorem by Orlik and Raymond says that, these invariants completely classify the 3d $S^1$-manifolds, after adding some constraints within these invariants. The following version is taken from Orlik's lecture notes \cite{Or72}.

\begin{thm}[Equivariant classification of closed 3d $S^1$-manifolds, \cite{Ra68,OR68}]
\label{OR}
Let $S^1$ act effectively and smoothly on a closed, connected smooth 3d manifold $M$. Then the orbit invariants
\[
\big\{b;(\epsilon,g,f,s);(m_1,\,n_1),\ldots,(m_r,\,n_r)\big\}
\]
determine $M$ up to equivariant diffeomorphisms, subject to the following conditions
\begin{itemize}
\item[(1)] 
$b=0$, if $f+s>0$\\
$b \in \mathbb{Z}$, if $f+s=0$ and $\epsilon=o$, orientable\\
$b \in \mathbb{Z}_2$, if $f+s=0$ and $\epsilon=n$, non-orientable\\
$b=0$, if $f+s=0$, $\epsilon=n$ and $m_i = 2$ for some $i$
\item[(2)]
$0<n_i<m_i,\,(m_i,\,n_i)=1$ if $\epsilon=o$\\
$0<n_i\leq \frac{m_i}{2},\,(m_i,\,n_i)=1$ if $\epsilon=n$
\end{itemize}
Conversely, any such set of invariants can be realized as a closed 3d manifold with an effective $S^1$-action.
\end{thm}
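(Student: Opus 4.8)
The plan is to reconstruct $M$ from the listed data by the standard cut-and-paste method: excise equivariant neighbourhoods of every non-principal orbit, recognise the free part as a principal $S^1$-bundle, and then measure the obstruction to reassembling the pieces, which will be the integer $b$. First I would invoke the preceding theorem on the orbit space: $M^*=M/S^1$ is a closed $2$-orbifold whose $r$ interior cone points carry the Seifert pairs $(m_i,n_i)$ and whose $f+s$ boundary circles come from the fixed circles and the special exceptional orbits. Deleting an open orbifold-disk around each cone point and an open collar around each boundary circle leaves a compact surface-with-boundary $M_0^*$ whose topological type is recorded by $(\epsilon,g)$. The restriction $M_0=\pi^{-1}(M_0^*)$ is a principal $S^1$-bundle over $M_0^*$; since $M_0^*$ has non-empty boundary it is homotopy equivalent to a wedge of circles, so $H^2(M_0^*;\mathbb{Z})=0$ and the bundle is trivial. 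I fix a section, giving $M_0\cong M_0^*\times S^1$ together with a framing of each boundary torus by a fibre and a section curve.

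Next I would reglue the excised neighbourhoods along their boundary tori, where $S^1$-equivariance restricts the pasting map to a small subgroup of the torus mapping-class group. For an exceptional orbit the solid-torus model $S^1\times_{\mathbb{Z}_{m_i}}D$ is inserted so that its meridian is sent to $m_i\cdot(\text{fibre})+n_i\cdot(\text{section})$; this is precisely the data of the coprime pair $(m_i,n_i)$. The fixed circles and special exceptional orbits are filled by the standard models $D\times S^1$ and $\text{M\"{o}b}\times S^1$ already computed above. What remains is the global twist needed to match the chosen framings: the obstruction to a cross-section over the whole of $M^*$ is an Euler class living in $H^2(M^*;\mathbb{Z})$, and this integer is $b$. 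This already accounts for condition~(1): when $f+s>0$ the orbit space has genuine boundary, a section exists rel boundary, the obstruction vanishes and $b=0$; when $f+s=0$ the base is closed and $H^2(M^*;\mathbb{Z})$ is $\mathbb{Z}$ or $\mathbb{Z}_2$ according as $M^*$ is orientable ($\epsilon=o$) or not ($\epsilon=n$).

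The heart of the argument, and the main obstacle, is uniqueness: showing that the normalised tuple is a complete invariant. Changing the section alters each $n_i$ by a multiple of $m_i$ while compensating in $b$, so only the Euler number $b+\sum n_i/m_i$ is intrinsic; normalising to $0<n_i<m_i$ absorbs the integer part into $b$ and yields condition~(2) in the orientable case. The non-orientable case is where care is required: dragging a cone point around an orientation-reversing loop conjugates its rotation $z\mapsto z^{n_i}$ to $z\mapsto z^{-n_i}$, hence replaces $(m_i,n_i)$ by $(m_i,m_i-n_i)$; this sharpens the normalisation to $0<n_i\le m_i/2$ and simultaneously collapses the group of admissible $b$ from $\mathbb{Z}$ to $\mathbb{Z}_2$. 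A point with $m_i=2$ is fixed by this move and lets one reverse the fibre orientation outright, killing the residual sign and forcing $b=0$, which is the last clause of condition~(1). Verifying that these moves generate all equivalences among framings, and no others, is the delicate bookkeeping I expect to dominate the proof.

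Finally, for the converse I would run the construction backwards: from any admissible tuple build $M_0^*$ of the prescribed type $(\epsilon,g)$ with the right number of punctures and collars, take the trivial bundle $M_0^*\times S^1$, and paste in the standard neighbourhoods using the $(m_i,n_i)$ as gluing data and $b$ as the final twist. Conditions~(1) and~(2) are exactly what make these pasting maps consistent, so the result is a well-defined closed $3$-manifold with an effective $S^1$-action realising the given invariants.
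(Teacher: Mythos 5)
First, a point of order: the paper does not actually prove Theorem \ref{OR}; it is quoted from Orlik's lecture notes and attributed to Raymond and Orlik--Raymond, so the only proof material available for comparison is the remark following Theorem \ref{OR-labelled}, which sketches Raymond's strategy for the uniqueness direction: lift a labelled orbifold diffeomorphism of orbit spaces to an equivariant diffeomorphism in three steps (first on the non-principal orbits, then on their tubular neighbourhoods, then over the principal part using local cross-sections, where $b=\bar b$ is what makes the last extension global). Your reconstruction-by-surgery sketch is the other half of the classical argument --- essentially the realization and normalization part of Raymond's paper --- and it is correct in outline: trivialize the principal bundle over the punctured orbit space, reglue the model neighbourhoods using the pairs $(m_i,n_i)$, and record the residual twist as $b$. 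One convention slip: you send the meridian to $m_i\cdot(\text{fibre})+n_i\cdot(\text{section})$, which would make the core an exceptional orbit of multiplicity $n_i$ rather than $m_i$; the standard convention is $m_i\cdot(\text{section})+n_i\cdot(\text{fibre})$.

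The genuine gap is exactly where you flag it. For the tuple to ``determine $M$ up to equivariant diffeomorphism'' you must show that any two manifolds assembled from the same normalized data are equivariantly diffeomorphic --- equivalently, that the moves you list (re-choosing the section, which shifts $n_i$ by multiples of $m_i$ against $b$; the orientation-reversing moves that replace $(m_i,n_i)$ by $(m_i,m_i-n_i)$ and reduce $b$ modulo $2$) generate the \emph{entire} ambiguity in the framings, and that no further identifications occur. You defer this as ``delicate bookkeeping,'' but it is the whole content of the uniqueness assertion and does not follow formally from the construction: one needs an equivariant isotopy argument showing that an arbitrary equivariant diffeomorphism can be deformed to respect the chosen decomposition and sections piece by piece. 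That is precisely what the step-by-step lifting described after Theorem \ref{OR-labelled} supplies, and what occupies most of Raymond's original paper. As written, your proposal establishes that the invariants can be extracted and realized, but not that they are complete.
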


\begin{rmk}
When $M$ has neither fixed point nor special exceptional orbit, i.e. $f=s=0$, then this is the case of classic Seifert manifolds. 
\end{rmk}

\begin{rmk}
The invariants in $M = \big\{b;(\epsilon,g,f,s);(m_1,\,n_1),\ldots,(m_r,\,n_r)\big\}$ mostly come from the orbit space $M^*=M/S^1$ except the invariant $b$. Therefore the constraint ($b=0$, if $f+s>0$) says that if the orbifold $M^*$ has boundaries, then $M=\big\{b=0;(\epsilon,g,f,s);(m_1,\,n_1),\ldots,(m_r,\,n_r)\big\}$ has only invariants completely determined by the orbifold $M/S^1$ and the assignment of its boundary circles either being fixed points or orbit space of special exceptional orbits.
\end{rmk}

\begin{rmk}
The above classification is up to equivariant diffeomorphisms. But Orlik and Raymond also discussed in certain conditions, more than one $S^1$-actions can appear on the same 3d manifold.
\end{rmk}

For an orientable $S^1$-manifold $M$, the orbit space $M^*=M/S^1$ will be orientable, i.e. $\epsilon = o$, and there will be no special exceptional orbits, i.e. $s=0$. 
\begin{cor}[Classification of closed orientable 3d $S^1$-manifolds, \cite{Ra68,OR68}]
If a closed 3d $S^1$-manifold is oriented and the $S^1$-action preserves the orientation. Then the orbit invariants
\[
\big\{b;(\epsilon=o,g,f,s=0);(m_1,\,n_1),\ldots,(m_r,\,n_r)\big\}
\]
determine $M$ up to equivariant diffeomorphisms, subject to the following conditions
\begin{itemize}
\item[(1)] 
$b=0$, if $f>0$\\
$b \in \mathbb{Z}$, if $f=0$
\item[(2)]
$0<n_i<m_i,\,(m_i,\,n_i)=1$
\end{itemize}
\end{cor}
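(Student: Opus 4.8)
The plan is to obtain this corollary as a specialization of the Orlik--Raymond Theorem~\ref{OR}, so the real content is to prove the two assertions made just before the statement: that an orientation-preserving $S^1$-action on an oriented closed $3$-manifold $M$ forces the orbit space to be orientable ($\epsilon=o$) and forbids special exceptional orbits ($s=0$). Once these are established, I would simply substitute $\epsilon=o$ and $s=0$ into the conditions of Theorem~\ref{OR} and read off the simplified list.

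First I would rule out special exceptional orbits. Suppose $x$ lies on such an orbit, so $S^1_x=\mathbb{Z}_2$ and, by the local model in the excerpt, the isotropy representation on the normal space $V_x$ is the reflection $\mathbb{Z}_2\curvearrowright\mathbb{R}^2$, which reverses the orientation of $V_x$. Decompose $T_xM=V_x\oplus T_x(S^1\cdot x)$. The non-trivial element $-1\in\mathbb{Z}_2\subset S^1$ fixes $x$ and, because $S^1$ is abelian, commutes with the flow generating the orbit; hence its differential preserves the fundamental vector field and acts as the identity on the orbit-tangent line $T_x(S^1\cdot x)$. Therefore $-1$ acts on $T_xM$ by (reflection)$\oplus$(identity), i.e.\ orientation-reversingly, contradicting the hypothesis that $-1\in S^1$ preserves the orientation of $M$. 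This gives $s=0$.

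Next I would show $\epsilon=o$. On the principal part $M_{(\{1\})}$, the quotient map to its orbit space is a principal $S^1$-bundle; an orientation of $M$ together with the standard orientation of the $S^1$-fibre (which the rotation action preserves) descends to an orientation of the free part of the orbit space, and the two choices patch globally since the action preserves the orientation of $M$. The only remaining non-principal local models are neighbourhoods of exceptional orbits and of fixed circles; by the list in the excerpt these contribute rotational orbifold charts $D/\mathbb{Z}_m$ and half-open annuli whose transition maps are orientation-preserving, and since $s=0$ there are no M\"obius/reflection pieces to destroy orientability. Hence $M^*$ is an orientable $2$-orbifold, i.e.\ $\epsilon=o$.

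Finally, with $\epsilon=o$ and $s=0$ in hand, the conditions of Theorem~\ref{OR} collapse: in condition~(1) the clauses assuming $\epsilon=n$ are vacuous and $f+s=f$, leaving $b=0$ when $f>0$ and $b\in\mathbb{Z}$ when $f=0$; in condition~(2) the $\epsilon=o$ clause gives $0<n_i<m_i$ with $(m_i,n_i)=1$. I expect the main obstacle to be the careful orientation bookkeeping in the two claims---in particular verifying that the stabilizer element really acts trivially on the orbit-tangent direction (using abelianness of $S^1$) and that the fibrewise and base orientations patch consistently across the whole free part---rather than any difficulty in the final substitution, which is routine.
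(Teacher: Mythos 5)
Your proposal is correct and follows essentially the same route as the paper, which obtains the corollary by observing that orientability of $M$ (with an orientation-preserving action) forces $\epsilon=o$ and $s=0$ and then specializing the conditions of Theorem~\ref{OR}; the paper states these two facts without proof, and your determinant argument at a special exceptional orbit and the descent-of-orientation argument for the orbit space supply exactly the missing justification. The final substitution into conditions (1) and (2) is handled correctly, including the observation that the $\epsilon=n$ clauses become vacuous.
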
 

Though the widely cited version of the Orlik-Raymond Theorem describes a closed 3d effective $S^1$-manifold $M$ as a set of unlabelled numeric tuples 
\[
\big\{b;(\epsilon,g,f,s);(m_1,\,n_1),\ldots,(m_r,\,n_r)\big\}
\]
Orlik and Raymond actually proved a much stronger version using a set of labelled data.

Instead of simply counting the numbers of boundary circles in $M^*$ from fixed points or special exceptional orbits as $(f,s)$, one can label every boundary circle of $M^*$ and specify whether it comes from fixed points or special exceptional orbits. Thus there are $f$ labelled circles $F_1,\dots,F_f$ coming from fixed points and $s$ labelled circles $SE_1,\dots,SE_s$ coming from special exceptional components.

Likewise, we assemble all the orbifold points $E_1,\ldots,E_r$ of $M^*=M/S^1$ together with the Seifert invariants as $(E_1,\,m_1,\,n_1),\ldots,(E_r,\,m_r,\,n_r)$.
 
Let's denote 
\[
\mathcal{L}=\big\{(E_1,\,m_1,\,n_1),\ldots,(E_r,\,m_r,\,n_r);F_1,\dots,F_f;SE_1,\dots,SE_s\big\}
\]
as the collection of labellings on boundaries and orbifold points of $M^*$. Now, since the orientability $\epsilon$ and the genus $g$ are determined by the orbifold surface $M^*$ itself, the tuple $(M^*,\mathcal{L},b)$ includes all the numeric data in $\{b;(\epsilon,g,f,s);(m_1,\,n_1),\ldots,(m_r,\,n_r)\}$.

\begin{rmk}
Changing the subscripts leads to different labellings. For instance, a pair of Seifert invariants $(m,n)$ can be labelled as $(E_1,\,m,\,n)$, but they can also be labelled as $(E_2,\,m,\,n)$ in another collection of labellings. However, the integer subscripts are just meant to distinguish different orbifold points or different boundary circles, not meant to rank them. 
\end{rmk}

\begin{dfn}
A map $\varphi$ between two labelled orbifold surfaces $(M^*,\mathcal{L},b)$ and $(\bar{M}^*,\bar{\mathcal{L}},\bar{b})$ is a \textbf{labelled orbifold diffeomorphism} if
\begin{itemize}
\item 
$\varphi:M^* \rightarrow \bar{M}^*$ is an orbifold diffeomorphism that extends well to the boundaries
\item
$\varphi:\mathcal{L} \rightarrow \bar{\mathcal{L}}$ respects labellings such that $E_i\overset{\varphi}{\mapsto}\bar{E}_j$ with $(m_i,n_i)=(\bar{m}_j,\bar{n}_j)$, $F_k\overset{\varphi}{\mapsto}\bar{F}_l$, $SE_u\overset{\varphi}{\mapsto}\bar{SE}_v$
\item
$b=\bar{b}$
\end{itemize}
\end{dfn}

\begin{rmk}
The rankings in the labellings $\{F_1,\dots,F_f;SE_1,\dots,SE_s;(x_1,\,m_1,\,n_1),\ldots,(x_r,\,m_r,\,n_r)\big\}$ are not important. So we don't require a labelled orbifold diffeomorphism $\varphi:(M^*,\mathcal{L},b)\rightarrow(\bar{M}^*,\bar{\mathcal{L}},\bar{b})$ to preserve the rankings. For instance, we could have $\varphi(F_1)=\bar{F}_2$, not necessarily $\varphi(F_1)=\bar{F}_1$ nor $\varphi(F_2)=\bar{F}_2$.
\end{rmk}

\begin{rmk}
As we pointed out before, a closed 3d effective $S^1$-manifold $M$ can have different labelled orbit space $(M^*,\mathcal{L}_1,b)$ and $(M^*,\mathcal{L}_2,b)$ by changing subscripts in the labellings. But according to the above definition, these two labelled orbit space are labelled orbifold diffeomorphic to each other simply via the identity map. Therefore, we will say \textbf{the} labelled orbit space $(M^*,\mathcal{L},b)$ with a slight abuse of notation. 
\end{rmk}

Here is the labelled version of classification of 3d closed $S^1$-manifolds, essentially due to Orlik and Raymond \cite{Ra68,OR68}:
\begin{thm}[Labelled classification of closed 3d $S^1$-manifolds, \cite{Ra68,OR68}]
\label{OR-labelled}
Given two compact closed 3d effective $S^1$-manifolds $M$ and $\bar{M}$, let's fix labellings for their orbit spaces as $(M^*,\mathcal{L},b)$ and $(\bar{M}^*,\bar{\mathcal{L}},\bar{b})$ respectively. Then, $M$ and $\bar{M}$ are $S^1$-equivariant diffeomorphic if and only if their labelled orbit spaces $(M^*,\mathcal{L},b)$ and $(\bar{M}^*,\bar{\mathcal{L}},\bar{b})$ are labelled orbifold diffeomorphic.

Moreover, there is a commutative diagram:
\begin{center}
\begin{tikzpicture}[description/.style={fill=white,inner sep=2pt}]
 \matrix (m) [matrix of math nodes, row sep=2em,
 column sep=2em, text height=1.5ex, text depth=0.25ex]
 {M & \bar{M}\\
 (M^*,\mathcal{L},b) & (\bar{M}^*,\bar{\mathcal{L}},\bar{b})\\};
 \path[->] 
 (m-1-1) edge node[above] {$\Phi$} node[below] {$\cong$}(m-1-2)
 (m-1-1) edge node[left] {$\pi$}(m-2-1)
 (m-2-1) edge node[above] {$\varphi$}node[below] {$\cong$}(m-2-2)
 (m-1-2) edge node[right] {$\bar{\pi} $} (m-2-2);
\end{tikzpicture}
\end{center}
that lifts a labelled orbifold diffeomorphism to an $S^1$-diffeomorphism.
\end{thm}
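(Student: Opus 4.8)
The plan is to prove the two implications of the biconditional separately, and then to make the lifting in the ``moreover'' part explicit. Since the unlabelled Orlik--Raymond classification (Theorem \ref{OR}) is already in hand, the real additional content is that the correspondence between manifolds and numeric data can be upgraded to a natural bijection realized by an actual orbifold diffeomorphism downstairs and an equivariant diffeomorphism upstairs, fitting into the stated commutative square.

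For the forward direction, suppose $\Phi\colon M\to\bar M$ is an $S^1$-equivariant diffeomorphism. First I would observe that $\Phi$ descends to a map $\varphi\colon M^*\to\bar M^*$ on orbit spaces, and that $\varphi$ is an orbifold diffeomorphism because $\Phi$ is smooth and equivariant. The key point is that $\Phi$ carries each orbit to an orbit of the same type: since $S^1_{\Phi(x)}=\Phi\, S^1_x\,\Phi^{-1}=S^1_x$, principal, exceptional, special exceptional and fixed orbits are all preserved, and on an exceptional orbit the isotropy representation is preserved, so the Seifert pair $(m_i,n_i)$ is unchanged. Hence $\varphi$ respects the labelling $\mathcal{L}$, sending $F$-circles to $F$-circles, $SE$-circles to $SE$-circles, and Seifert points to Seifert points with equal invariants. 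Finally, because the Euler number $b$ is the obstruction to a global section over the principal part and is natural under equivariant isomorphism, we get $b=\bar b$, so $\varphi$ is a labelled orbifold diffeomorphism.

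The reverse direction is the substantial one, and it simultaneously produces the lifting diagram. Given a labelled orbifold diffeomorphism $\varphi\colon(M^*,\mathcal{L},b)\to(\bar M^*,\bar{\mathcal{L}},\bar b)$, I would first decompose $M^*$ into the principal part $M^*_0$, obtained by deleting small open neighbourhoods of every orbifold point and every boundary circle, together with the standard local models listed in the patching table: an orbifold disk $D/\mathbb{Z}_m$ around each Seifert point, and half-open annuli $[0,1)\times S^1$ around each $F$- and $SE$-circle. Over $M^*_0$ the manifold $M$ restricts to a principal $S^1$-bundle, which is trivializable since $M^*_0$ is homotopy equivalent to a $1$-complex; thus $\varphi_0\colon M^*_0\to\bar M^*_0$ lifts to a bundle isomorphism $\Phi_0$, unique up to a gauge transformation valued in the structure group $S^1$. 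Over each local neighbourhood the relevant equivariant model ($S^1\times_{\mathbb{Z}_m}D$, $D\times S^1$, or $\text{M\"{o}b}\times S^1$) is rigid, so $\varphi$ lifts canonically there as well, once a fibre coordinate is fixed. The heart of the argument is to glue these local lifts to $\Phi_0$ along the boundary circles: the discrepancy between two choices on a gluing torus is measured by an element of the relevant mapping class group, and the accumulated obstruction to a consistent global gluing is exactly the Euler number. Since $b=\bar b$ and since the constraints of Theorem \ref{OR} pin down the remaining freedom, the local lifts can be adjusted to agree on overlaps and assemble into a global equivariant diffeomorphism $\Phi\colon M\to\bar M$, yielding the commutative square by construction.

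The main obstacle will be the gluing step in the reverse direction, namely verifying that the Euler number $b$ is the complete obstruction to patching the canonical local lifts into a single equivariant diffeomorphism. This requires a careful bookkeeping of the fibre-coordinate choices along each boundary torus and of the action of the mapping class groups of the gluing annuli, and it is precisely here that the non-orientable case ($\epsilon=n$, where $b$ lives only in $\mathbb{Z}_2$) and the presence of special exceptional orbits force the sign and range conventions of condition (2) in Theorem \ref{OR}. The labelled refinement itself adds little difficulty beyond the unlabelled theorem: since a labelled orbifold diffeomorphism is permitted to permute same-type boundary circles and same-invariant Seifert points, the subscripts never obstruct the construction, and the entire content is carried by the underlying geometric lifting.
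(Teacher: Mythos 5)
Your outline is essentially the same as the paper's: the paper does not prove this theorem in detail but attributes it to \cite{Ra68,OR68} and sketches Raymond's strategy in a remark --- lift the labelled correspondence over the non-principal orbits, extend to their tubular neighbourhoods, then extend over the principal part using local cross sections, with $b=\bar b$ as the final obstruction to a global $S^1$-diffeomorphism --- which is the same three-step lifting you describe (merely reordered, starting from the trivialized principal bundle over $M^*_0$ rather than from the non-principal orbits). The hard gluing step you flag as the main obstacle is exactly the part the paper also leaves to the original references.
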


\begin{rmk}
Though never stated explicitly, this version of the Orlik-Raymond theorem is the backbone of Orlik and Raymond's proof of the common version: if two closed, effective $S^1$-manifolds $M^3$ and $\bar{M}^3$ have the same numeric data $\{b;(\epsilon,g,f,s);(m_1,\,n_1),\ldots,(m_r,\,n_r)\}$, then one can find a labelled orbifold diffeomorphism between their 2-dimensional labelled orbit spaces $(M^*,\mathcal{L},b)$ and $(\bar{M}^*,\bar{\mathcal{L}},\bar{b})$ by playing them like 2-dimensional rubber sheets carefully with the orbifold points and boundaries in mind, hence $M^3$ and $\bar{M}^3$ are $S^1$-equivariant diffeomorphic by the laballed version of Orlik-Raymond theorem. 
\end{rmk}

\begin{rmk}
Raymond's idea of proving this theorem is to lift the labelled orbifold diffeomorphism $(M^*,\mathcal{L},b)\overset{\varphi}{\rightarrow} (\bar{M}^*,\bar{\mathcal{L}},\bar{b})$ to an $S^1$-diffeomorphism $M \overset{\Phi}{\rightarrow} \bar{M}$. The lifting is carried out in steps: firstly, lift the correspondence $\varphi:\mathcal{L} \rightarrow \bar{\mathcal{L}}$ to $\Phi: E\cup F \cup SE \rightarrow \bar{E}\cup \bar{F} \cup \bar{SE}$ between non-principal orbits; secondly, extend this map to a tubular neighbourhood of the non-principal orbits; finally, extend this map to all the principal orbits using local cross sections, which actually gives a global $S^1$-diffeomorphism because $b=\bar{b}$.
\end{rmk}

\begin{rmk}
Given a labelled orbifold diffeomorphism $(M^*,\mathcal{L},b)\overset{\varphi}{\rightarrow} (\bar{M}^*,\bar{\mathcal{L}},\bar{b})$, the lifted $S^1$-diffeomorphism $M \overset{\Phi}{\rightarrow} \bar{M}$ is not necessarily unique up to equivariant isotopies. For example, let $P$ be a principal $S^1$-bundle over a closed surface $M_g$ of genus $g$. Any $S^1$-automorphism $\Phi$ on $P$ that induces the identity map on $M_g$ corresponds to a map $f_\Phi: M_g \rightarrow S^1$. Thus, $\Phi$ up to $S^1$-isotopy corresponds to $f_\Phi$ up to homotopy, which is classified by $H^1(M_g,\mathbb{Z})$.
\end{rmk}

\vskip 20pt
\section{$S^1$-actions on 3d manifolds with boundaries}
\vskip 15pt

Let $M^3$ be a compact connected 3d manifold with an effective $S^1$-action that extends compatibly to its non-empty boundary $\partial M$. Combining the classification of $S^1$-actions on closed 2d and 3d manifolds, we can generalize the Orlik-Raymond classification theorem to $S^1$-actions on 3d manifolds with boundaries.

\subsection{Neighbourhoods and orbit spaces}\label{subsec:neighbour}
Similar to the case of 3d $S^1$-manifolds without boundary, we will first give a complete description of the equivariant neighbourhoods.

\subsubsection{Collaring neighbourhoods of boundaries and their orbit spaces}
The Theorem of equivariant collaring neighbourhood says that for any $S^1$-invariant boundary component $B$ in $\partial M$, an equivariant collaring neighbourhood of $B$ in $M$ looks like $B\times [0,1)$, whose orbit space is $B/S^1 \times [0,1)$.

We have seen in the discussion of 2d closed $S^1$-manifolds that there are four of them up to equivariant diffeomorphisms: $\mathbb{T}^2,\, S^2,\,K,\, \mathbb{R}P^2$ which will appear as boundaries of 3d $S^1$-manifolds. 

The non-principal orbits appearing in $\mathbb{T}^2,\, S^2,\,K,\, \mathbb{R}P^2$ are either fixed points or $S^1/\mathbb{Z}_2$ with isotropy representation being a reflection, hence a special exceptional orbit. Therefore, among the union of the non-principal orbits $E\cup F \cup SE$, the boundary $\partial M$ is separated from the exceptional orbits $E$, but could possibly have common points with the fixed points and special exceptional orbits $F \cup SE$.

More explicitly, each boundary component $\mathbb{T}^2$ has an equivariant collaring neighbourhood $\mathbb{T}^2 \times [0,1)$ consisting of only principal orbits. The orbit space $\mathbb{T}^2/S^1 \times [0,1)=S^1 \times [0,1)$ is a half closed, half open annulus with a circle boundary:
\begin{center}
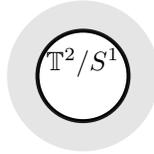
\begin{figure}[H]
\begin{tikzpicture}
\draw[line width=1.5pt]
(0,0) circle (0.6cm) node[label={[yshift=-0.3cm]$\mathbb{T}^2/S^1$}] {};
\path [draw=none, fill=gray, opacity = 0.2, even odd rule] (0,0) circle (0.6cm) (0,0) circle (1cm);
\end{tikzpicture}
\caption{Cylinder $S^1 \times [0,1)$ flattened as annulus}
\label{fig:Annulus}
\end{figure}
\end{center}
where the boundary circle is the orbit space $\mathbb{T}^2/S^1=S^1$.

Each boundary component $S^2$ has an equivariant collaring neighbourhood $S^2 \times [0,1)$ with the two fixed poles $N,\,S$ attached to the two fixed intervals $N\times[0,1),\,S\times[0,1)$ respectively. The orbit space $S^2/S^1 \times [0,1)=[-1,1] \times [0,1)$ is an open manifold with 3 boundaries and 2 corners:
\begin{center}
\begin{figure}[H]
\begin{tikzpicture}
\draw[line width=1.5pt] (-1,1)--(-1,0)-- node[below] {$S^2/S^1$} (1,0)--(1,1); 
\fill[gray,opacity=0.2] (-1,0) rectangle (1,1);
\node[circle, draw, fill=black, inner sep=0pt, minimum size=4.5pt, label={[xshift=-0.3cm, yshift=-0.3cm]$N$}] at (-1,0){};
\node[circle, draw, fill=black, inner sep=0pt, minimum size=4.5pt, label={[xshift=0.3cm, yshift=-0.3cm]$S$}] at (1,0){};
\end{tikzpicture}
\caption{One-side-open rectangle $[-1,1] \times [0,1)$}
\label{fig:Corner1}
\end{figure}
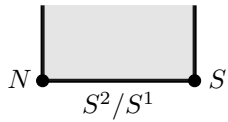
\end{center}
where the the bottom interval is the orbit space $S^2/S^1$, the left and right intervals come from the two fixed intervals $N\times[0,1),\,S\times[0,1)$, the two corner points are the two poles $N,\,S$.

Each boundary component $\mathbb{R}P^2$ has an equivariant collaring neighbourhood $\mathbb{R}P^2 \times [0,1)$ with a fixed point $p$ and the orbit $S^1/\mathbb{Z}_2$ attached to a fixed interval $p\times[0,1)$ and a special exceptional component $S^1/\mathbb{Z}_2 \times [0,1)$ respectively. The orbit space $\mathbb{R}P^2/S^1 \times [0,1)=[0,1] \times [0,1)$ is an open manifold with 3 boundaries and 2 corners:
\begin{center}
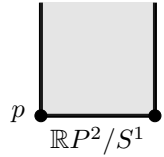
\begin{figure}[H]
\begin{tikzpicture}
\draw[line width=1.5pt] (0,1.5)--(0,0)-- node[below] {$\mathbb{R}P^2/S^1$} (1.5,0)--(1.5,1.5); 
\fill[gray,opacity=0.2] (0,0) rectangle (1.5,1.5);
\node[circle, draw, fill=black, inner sep=0pt, minimum size=4.5pt, label={[xshift=-0.3cm, yshift=-0.3cm]$p$}] at (0,0){};
\node[circle, draw, fill=black, inner sep=0pt, minimum size=4.5pt] at (1.5,0){};
\end{tikzpicture}
\caption{One-side-open rectangle $[0,1] \times [0,1)$}
\label{fig:Corner2}
\end{figure}
\end{center}
where the the bottom interval is the orbit space $\mathbb{R}P^2/S^1$, the left interval is the fixed interval $p\times[0,1)$ with the corner point $p$, and the right interval comes from the orbit space $(S^1/\mathbb{Z}_2 \times [0,1))/S^1$ with the other corner point.

Each boundary component $K$ has an equivariant collaring neighbourhood $K \times [0,1)$ with the two $S^1/\mathbb{Z}_2$-orbits attached to special exceptional components $S^1/\mathbb{Z}_2 \times [0,1)$ respectively. The orbit space $K/S^1 \times [0,1)=[0,\pi] \times [0,1)$ is an open manifold with 3 boundaries and 2 corners:
\begin{center}
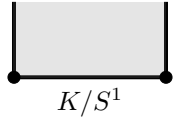
\begin{figure}[H]
\begin{tikzpicture}
\draw[line width=1.5pt] (0,1)--(0,0)-- node[below] {$K/S^1$} (2,0)--(2,1); 
\fill[gray,opacity=0.2] (0,0) rectangle (2,1);
\node[circle, draw, fill=black, inner sep=0pt, minimum size=4.5pt] at (0,0){};
\node[circle, draw, fill=black, inner sep=0pt, minimum size=4.5pt] at (2,0){};
\end{tikzpicture}
\caption{One-side-open rectangle $[0,\pi] \times [0,1)$}
\label{fig:Corner3}
\end{figure}
\end{center}
where the the bottom interval is the orbit space $K/S^1$, the left and right intervals come from the orbit spaces $(S^1/\mathbb{Z}_2 \times [0,1))/S^1$ with corner points.

\subsubsection{Tubular neighbourhoods of non-principal orbits and their orbit spaces}
Equivariant tubular neighbourhoods of an exceptional orbit $S^1/\mathbb{Z}_m$, a fixed circle $S^1$ or a special exceptional component $S^1/\mathbb{Z}_2 \times S^1$ will still be $S^1 \times_{\mathbb{Z}_m}D$, $D\times S^1$ or $\text{M\"{o}b} \times S^1$ respectively, the same as we see in the case of 3d $S^1$-manifolds without boundary. The orbit spaces of these neighbourhoods provide orbifold chart $D/\mathbb{Z}_m$ and annulus charts $[0,1)\times S^1$ for $M/S^1$.

Suppose a fixed connected component $F_i$ has common points with the boundary $\partial M$. As an 1d compact manifold with boundary, $F_i$ has to be a closed interval, denoted as $[0,1]$. So its equivariant tubular neighbourhood will be $D\times [0,1]$, with boundary $(D\times [0,1])\cap \partial M = D\times \{0\} \cup D\times \{1\}$. The orbit space
\[
\big(D\times [0,1]\big)/S^1 = D/S^1 \times [0,1] = [0,1) \times [0,1]
\]
is an open manifold with 3 boundaries and 2 corners:
\begin{center}
\begin{figure}[H]
\begin{tikzpicture}
\draw[line width=1.5pt] (1.5,1.5)--node[above]{$D/S^1$}(0,1.5)--node[left]{$F_i$}(0,0)--node[below]{$D/S^1$}(1.5,0); 
\fill[gray,opacity=0.2] (0,0) rectangle (1.5,1.5);
\node[circle, draw, fill=black, inner sep=0pt, minimum size=4.5pt] at (0,1.5){};
\node[circle, draw, fill=black, inner sep=0pt, minimum size=4.5pt] at (0,0){};
\end{tikzpicture}
\caption{One-side-open rectangle $[0,1) \times [0,1]$}
\label{fig:Corner4}
\end{figure}
\end{center}
where the the left interval is the fixed interval $F_i$, the bottom and top intervals come from orbit space of the boundary $D\times \{0\} \cup D\times \{1\}$.

Similarly, suppose a special exceptional connected component $SE_j$ has common points with the boundary $\partial M$. As a 2d compact principal $S^1/\mathbb{Z}_2$-manifold with boundary, $SE_j$ has to be a cylinder $S^1/\mathbb{Z}_2 \times [0,1]$, according to the classification theorem in dimension 2. So its equivariant tubular neighbourhood will be $\text{M\"{o}b} \times [0,1]$, with boundary $(\text{M\"{o}b}\times [0,1])\cap \partial M = \text{M\"{o}b}\times \{0\} \cup \text{M\"{o}b}\times \{1\}$. The orbit space
\[
\big(\text{M\"{o}b}\times [0,1]\big)/S^1 = \text{M\"{o}b}/S^1 \times [0,1] = [0,1) \times [0,1]
\]
is an open manifold with 3 boundaries and 2 corners:
\begin{center}
\begin{figure}[H]
\begin{tikzpicture}
\draw[line width=1.5pt] (1.5,1.5)--node[above]{$\text{M\"{o}b}/S^1$}(0,1.5)--node[left]{$SE_j/S^1$}(0,0)--node[below]{$\text{M\"{o}b}/S^1$}(1.5,0); 
\fill[gray,opacity=0.2] (0,0) rectangle (1.5,1.5);
\node[circle, draw, fill=black, inner sep=0pt, minimum size=4.5pt] at (0,1.5){};
\node[circle, draw, fill=black, inner sep=0pt, minimum size=4.5pt] at (0,0){};
\end{tikzpicture}
\caption{One-side-open rectangle $[0,1) \times [0,1]$}
\label{fig:Corner5}
\end{figure}
\end{center}
where the the left interval comes from the orbit space $SE_j/S^1$, the bottom and top intervals come from orbit space of the boundary $\text{M\"{o}b}\times \{0\} \cup \text{M\"{o}b}\times \{1\}$.

\subsubsection{Orbit spaces of 3d $S^1$-manifolds with boundaries}
Using the discussion of local orbit spaces, we have the following theorem about orbit space of 3d $S^1$-manifolds with boundaries:

\begin{prop}[Orbit space of 3d $S^1$-manifold]
For a compact connected 3d effective $S^1$-manifold $M$ possibly with boundary, the orbit space $M^*=M/S^1$ is a 2d orbifold surface, possibly with corners. The orbifold surface $M^*$ has finite number of interior orbifold points $E/S^1$ with Seifert invariants $\{(m_1,\,n_1),\ldots,(m_r,\,n_r)\}$. It has boundary $\partial M^* = F\cup SE/S^1\cup (\partial M)/S^1$ and corner points $\partial^2 M^* = (F\cup SE/S^1)\cap (\partial M)/S^1$.
\end{prop}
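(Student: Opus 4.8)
The plan is to assemble the orbit space $M^*=M/S^1$ out of the local orbit-space models catalogued in the two preceding subsubsections, and then to read off the orbifold structure, the boundary and the corners directly from that local list. First I would use the compactness of $M$, together with the Slice Theorem and the Theorems of equivariant tubular and collaring neighbourhoods, to reduce the global picture to finitely many pieces: there are only finitely many exceptional orbits $E$, fixed components $F$, special exceptional components $SE$, and boundary components of $\partial M$, since each of these strata is closed and carries a tubular or collaring neighbourhood while $M$ is compact. In particular this already guarantees the \emph{finite} number of interior orbifold points.

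Next I would observe that every point $x\in M$ lies in an equivariant neighbourhood whose orbit space appears in our local list, so that each such orbit space is a $2$-dimensional chart, possibly with boundary and corners. Concretely, a principal orbit contributes a smooth disk $D$; an exceptional orbit $S^1/\mathbb{Z}_m$ contributes an orbifold disk $D/\mathbb{Z}_m$ with a single interior orbifold point; an interior fixed circle or special exceptional torus contributes a half-open annulus $[0,1)\times S^1$ with one boundary circle; a torus boundary component contributes a half-open annulus $\mathbb{T}^2/S^1\times[0,1)$; and the collars of $S^2,\,\mathbb{R}P^2,\,K$, together with the fixed intervals $F_i$ and special exceptional cylinders $SE_j$ meeting $\partial M$, contribute one-side-open rectangles of the form $[a,b]\times[0,1)$ or $[0,1)\times[0,1]$, each carrying exactly two corners. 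Thus $M^*$ is everywhere locally a $2$-dimensional orbifold-with-corners.

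I would then check that these charts patch together. The key point is that, within each tubular or collaring neighbourhood, the non-principal strata and the boundary strata are isolated, so that any overlap between two charts consists entirely of principal orbits; over the principal part the $S^1$-action is free and the quotient map is a smooth submersion, whence the transition maps are honest smooth maps of surfaces with boundary and corners. Therefore the local charts glue into a single $2$-dimensional orbifold surface $M^*$, with a well-defined orbifold structure (unambiguous because the cone points of the $D/\mathbb{Z}_m$ are interior and isolated) and a well-defined corner structure.

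Finally I would identify the distinguished subsets and confront the one delicate point. The interior orbifold points are precisely the images $E/S^1$ of the exceptional orbits, each labelled by the Seifert invariants $(m_i,n_i)$ coming from the rotation representation $\mathbb{Z}_{m_i}\curvearrowright\mathbb{C}$. The boundary $\partial M^*$ is assembled from three families of $1$-dimensional strata, namely the orbit spaces of the fixed circles and intervals $F$, of the special exceptional components $SE/S^1$, and of the genuine boundary $(\partial M)/S^1$, giving $\partial M^* = F\cup SE/S^1\cup (\partial M)/S^1$. The main obstacle, and the step needing real care, is to verify that these families fit together into a genuine $1$-manifold forming $\partial M^*$ and that corners occur \emph{exactly} where a fixed or special-exceptional stratum abuts the image of $\partial M$: here one must invoke the classification of $2$-dimensional $S^1$-manifolds to rule out any other source of corners — torus boundaries and interior fixed circles or special exceptional tori contribute smooth boundary circles without corners, while the corners of the $S^2,\,\mathbb{R}P^2,\,K$ collars and of the rectangles $[0,1)\times[0,1]$ are precisely the intersection points $(F\cup SE/S^1)\cap (\partial M)/S^1 = \partial^2 M^*$.
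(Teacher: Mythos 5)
Your proposal is correct and follows essentially the same route as the paper: the paper's proof likewise reads off the orbifold, boundary, and corner structure of $M^*$ directly from the catalogue of local orbit-space models established in Subsection~\ref{subsec:neighbour}, noting that orbifold points are interior, that $\partial M^*$ comes from $F$, $SE$ and $\partial M$, and that corners arise exactly where a fixed or special exceptional component meets $\partial M$. You simply make explicit some steps the paper leaves implicit (finiteness via compactness, patching over the principal part).
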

\begin{proof}
From the local analysis, it is clear that $M^*=M/S^1$ is a 2d orbifold surface possibly with corners. Any orbifold point $pt/\mathbb{Z}_m$ is an interior point, because it is the only orbifold point in its neighbourhood $D/\mathbb{Z}_m$. The boundary $\partial M^*$ comes from the fixed point set $F$, special exceptional orbits $SE$ and the boundary $\partial M$. The corner points of the $M^*$ appears when a fixed component or a special exceptional component in $M$ meets the boundary $\partial M$.
\end{proof}

\begin{rmk}
The notation $\partial^2 M^*$ for corners is because in dimension 2, corners are boundary of the boundary. For general theory of manifolds with corners, see Joyce \cite{Jo12}.
\end{rmk}

\subsection{Labellings on 2d orbifold surface with corners}
Similar to the labelling procedure for 2d orbifold surface without corners, we will give labellings to 2d orbifold surface $M^*=M/S^1$, possibly with corners, as follows:

\begin{itemize}
\item
$(E_1,\,m_1,\,n_1),\ldots,(E_r,\,m_r,\,n_r)$: the orbifold points
\item
$F_1,\dots,F_{f}$: circle boundaries of $M^*$ that come from fixed components in $M$ not touching $\partial M$
\item
$F_{f+1},\dots,F_{f_0}$: interval boundaries of $M^*$ that come from fixed components in $M$ touching $\partial M$
\item
$SE_1,\dots,SE_{s}$: circle boundaries of $M^*$ that come from special exceptional components in $M$ not touching $\partial M$
\item 
$SE_{s+1},\dots,SE_{s_0}$: interval boundaries of $M^*$ that come from special exceptional components in $M$ touching $\partial M$
\item
$T_1,\dots,T_t$: circle boundaries of $M^*$ that are the orbit spaces of torus boundaries of $M$
\item 
$SP_1,\dots,SP_{s_p}$: interval boundaries of $M^*$ that are the orbit spaces of sphere boundaries of $M$
\item
$K_1,\dots,K_k$: interval boundaries of $M^*$ that are the orbit spaces of Klein bottle boundaries of $M$
\item
$RP_1,\dots,RP_{r_p}$: interval boundaries of $M^*$ that are the orbit spaces of projective plane boundaries of $M$
\item
$V^F_1,\dots,V^F_{v_f}$: corner points of $M^*$ that come from the fixed points of $\partial M$, i.e. $F \cap \partial M$
\item
$V^S_1,\dots,V^S_{v_s}$: corner points of $M^*$ that come from the special exceptional orbits of $\partial M$, i.e. $SE \cap \partial M$
\end{itemize}

Those labellings are not isolated from one another. In fact, if we denote
\begin{align*}
    \mathcal{V} &= \{V^F_1,\dots,V^F_{v_f}\}\cup\{V^S_1,\dots,V^S_{v_s}\}\\
    \mathcal{E} &= \{F_{f+1},\dots,F_{f_0}\}\cup \{SE_{s+1},\dots,SE_{s_0}\}\cup \{SP_1,\dots,SP_{s_p}\}\cup \{K_1,\dots,K_k\}\cup\{RP_1,\dots,RP_{r_p}\}
\end{align*}
Then we get a labelled graph $\mathcal{G}=\{\mathcal{V},\mathcal{E}\}$, with the set of labelled vertices $\mathcal{V}$ and the set of labelled edges $\mathcal{E}$.

\begin{prop}
The labelled graph $\mathcal{G}=\{\mathcal{V},\mathcal{E}\}$ has the following properties:
\begin{itemize}
\item[(1)]
$\mathcal{G}$ is a union of cycle graphs. Or equivalently, each vertex is the endpoint of exactly two edges, one resulting from $F\cup SE$, the other resulting from $\partial M$.
\item[(2)]
There are numeric relations:
\begin{align*}
v_f &= 2(f_0-f)= 2s_p+r_p\\
v_s &= 2(s_0-s)= 2k+r_p    
\end{align*}
\end{itemize}
\end{prop}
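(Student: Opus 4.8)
The plan is to reduce both statements to a purely local analysis at the corner points of $M^*$, followed by an elementary incidence count. The starting observation is that a corner of $M^*$ is precisely the image of a point $x \in M$ lying simultaneously in $\partial M$ and in $F \cup SE$, so the local orbit-space models already assembled in Section \ref{subsec:neighbour} completely describe the edges of $\mathcal{G}$ incident to that corner.

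First I would establish the degree-$2$ property that underlies (1). Let $V$ be a corner of $M^*$ and lift it to $x \in (F \cup SE) \cap \partial M$. If $x$ is a fixed point on $\partial M$, its fixed component is a compact $1$-manifold having $x$ as a boundary point, so near $x$ the fixed locus is a single half-open interval contributing one edge of type $F$ (one of $F_{f+1}, \dots, F_{f_0}$); at the same time $x$ is a fixed point of the boundary surface through it, which by the classification of $2$d $S^1$-manifolds must be $S^2$ or $\mathbb{R}P^2$, contributing exactly one edge of type $SP$ or $RP$. Hence $V$ is the endpoint of exactly two edges, one from $F$ and one from $\partial M$. If instead $x$ lies on a special exceptional orbit on $\partial M$, the same argument run through the cylinder neighbourhood $\text{M\"{o}b} \times [0,1]$ and a boundary surface $K$ or $\mathbb{R}P^2$ shows that $V$ is the endpoint of exactly one $SE$-edge and one $\partial M$-edge of type $K$ or $RP$. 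Since every edge of $\mathcal{E}$ is an interval boundary of $M^*$ whose two endpoints are corners, each edge terminates at two vertices of $\mathcal{V}$; combined with the fact that every vertex has degree exactly $2$, the finite graph $\mathcal{G}$ is a disjoint union of cycles, and the complementary typing of the two edges at each vertex is exactly the stated equivalent formulation (and forces each cycle to alternate between $F\cup SE$ and $\partial M$ edges).

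For (2) I would double-count incidences. Each of the $f_0 - f$ fixed intervals is a closed interval with two endpoints in $\{V^F_1,\dots,V^F_{v_f}\}$, and by the degree analysis each $V^F$ is the endpoint of exactly one such interval, giving $v_f = 2(f_0 - f)$. On the other hand $v_f$ counts the fixed points of $\partial M$, which I would tally boundary component by boundary component from the $2$d classification: each $S^2$ has $2$ poles, each $\mathbb{R}P^2$ has $1$ fixed point, and $K$, $\mathbb{T}^2$ contribute none, so $v_f = 2 s_p + r_p$. The identities for $v_s$ follow verbatim, replacing fixed intervals by the special exceptional cylinders $S^1/\mathbb{Z}_2 \times [0,1]$ (each with two boundary circles) and counting special exceptional orbits on $\partial M$ ($2$ per $K$, $1$ per $\mathbb{R}P^2$, none on $S^2$ or $\mathbb{T}^2$), which yields $v_s = 2(s_0 - s) = 2k + r_p$.

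The only real obstacle sits in the local step for (1): one must rule out a corner carrying two edges of the same type (two fixed intervals, say, or two boundary edges), and must check that the two local pictures meeting at $V$---the interior model $D\times[0,1]$ or $\text{M\"{o}b}\times[0,1]$ and the boundary collar of the surface---glue so that each contributes exactly one edge. This is exactly where the classification of $2$d $S^1$-manifolds and the dimension counts ($F$ a $1$-manifold, $SE$ a $2$-manifold) do the work, pinning down the simplest corner model; once this compatibility is confirmed, the remainder is bookkeeping.
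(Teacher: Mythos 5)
Your proposal is correct and follows essentially the same route as the paper: identify each corner as a point of $(F\cup SE)\cap\partial M$ joining exactly one interval from $F\cup SE$ and one from $\partial M$, then double-count endpoints of fixed/special-exceptional intervals on one side and fixed points/special exceptional orbits of the boundary surfaces ($2$ per $S^2$, $1$ per $\mathbb{R}P^2$; $2$ per $K$, $1$ per $\mathbb{R}P^2$) on the other. Your write-up is somewhat more explicit than the paper's about the local degree-$2$ verification at a corner, but this is an elaboration of the same argument rather than a different one.
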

\begin{proof}
For any vertex $V^F_i \in F \cap \partial M$, it joins two intervals. Among these two intervals, one comes from the fixed set $F$; the other one comes from $\partial M$, either being a sphere or a projective plane. 

On the one hand, a fixed interval $F_{f_0+j}$ has two end-points both being in $\{V^F_1,\dots,V^F_{v_f}\}$. Any two fixed intervals don't share fixed end-points, otherwise they will form a single fixed interval. So we get $v_f = 2(f_0-f)$.

On the other hand, among the boundary components of $M^3$, a sphere boundary has two fixed points which are the two poles; a projective plane boundary has exactly one fixed points. All these boundary components are separated from one another. So we get $v_f = 2s_p+r_p$.

For vertices $\{V^S_1,\dots,V^S_{v_s}\}$, the argument and computation is similar.
\end{proof}

As a simple corollary, the numeric relations reveal the parity information. 
\begin{cor}\label{cor:Parity}
Both $v_f$ and $v_s$, the numbers of corner points of two different types, are even. The number of $\mathbb{R}P^2$ boundaries of $M^3$, denoted as $r_p$, is also even. Actually, the number of $RP$-edges in every cycle component of $\mathcal{G}$ is even.
\end{cor}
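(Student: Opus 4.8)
The plan is to dispose of the three ``evenness'' claims first, since two of them are immediate from the preceding Proposition. Its numeric relations give $v_f = 2(f_0-f)$ and $v_s = 2(s_0-s)$, so $v_f$ and $v_s$ are visibly even. Equating the two expressions for $v_f$, namely $2s_p + r_p = 2(f_0-f)$, yields $r_p = 2(f_0-f) - 2s_p$, an even number; the relation $2k + r_p = 2(s_0-s)$ gives the same conclusion. Thus $r_p$ is even, and it only remains to prove the sharper, local statement that each cycle component of $\mathcal{G}$ carries an even number of $RP$-edges (the global evenness of $r_p$ then follows again by summing over components).

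For the per-cycle claim, first I would classify the endpoint types of every edge of $\mathcal{G}$, reading them off the local orbit-space pictures of Section \ref{subsec:neighbour}. Recall that the vertices split into the $V^F$-vertices (from $F\cap\partial M$) and the $V^S$-vertices (from $SE\cap\partial M$). Going through the figures: a fixed interval ($F$-edge, Figure \ref{fig:Corner4}) has both endpoints in $F$, hence joins two $V^F$-vertices; a special-exceptional interval ($SE$-edge, Figure \ref{fig:Corner5}) joins two $V^S$-vertices; a sphere boundary ($SP$-edge, Figure \ref{fig:Corner1}) has its two poles in $F$, joining two $V^F$-vertices; a Klein-bottle boundary ($K$-edge, Figure \ref{fig:Corner3}) has its two corner orbits in $SE$, joining two $V^S$-vertices; and a projective-plane boundary ($RP$-edge, Figure \ref{fig:Corner2}) has exactly one corner in $F$ and one in $SE$, hence joins a $V^F$-vertex to a $V^S$-vertex. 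The point to extract is that $RP$-edges are the unique edges whose two endpoints have different types, while every other edge type joins endpoints of the same type.

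With this in hand the parity argument is a $2$-coloring of the cycle: color $V^F$-vertices white and $V^S$-vertices black. By the classification, traversing an edge changes the color exactly when the edge is an $RP$-edge, and preserves it otherwise. Since a cycle is a closed walk, returning to its starting vertex forces the total number of color changes around it to be even, so each cycle contains an even number of $RP$-edges, which is the final assertion. I expect the only genuine work to be the endpoint-type bookkeeping in the second step; once each boundary component is matched to its corner types via the local orbit-space analysis, the concluding parity argument is immediate.
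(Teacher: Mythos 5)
Your proposal is correct and takes essentially the same route as the paper: the evenness of $v_f$, $v_s$ and $r_p$ is read directly off the numeric relations $v_f=2(f_0-f)=2s_p+r_p$ and $v_s=2(s_0-s)=2k+r_p$ of the preceding Proposition, exactly as intended. The paper states the per-cycle claim without an explicit argument, and your endpoint classification (matching the edge--vertex incidences later recorded in condition (3) of Theorem \ref{classifyB}) together with the $2$-coloring parity argument is precisely the implicit justification, so there is nothing to correct.
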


Now, we can gather all these labellings into a system for $M^*=M/S^1$ as
\[
\mathcal{L}=\big\{(E_1,\,m_1,\,n_1),\ldots,(E_r,\,m_r,\,n_r);F_1,\dots,F_{f};SE_1,\dots,SE_{s};T_1,\dots,T_t;\mathcal{G}\big\}
\]

\begin{dfn}
Let $(M^*,\mathcal{L})$ and $(\bar{M}^*,\bar{\mathcal{L}})$ be two labelled orbifold surfaces with corners. A map $\varphi$ between them is a \textbf{labelled orbifold diffeomorphism} if
\begin{itemize}
\item 
$\varphi:M^* \rightarrow \bar{M}^*$ is an orbifold diffeomorphism that extends well to the boundaries and corners
\item
$\varphi:\mathcal{L} \rightarrow \bar{\mathcal{L}}$ respects the types of labellings and the graph structures
\end{itemize}
\end{dfn}

\begin{rmk}
When an $S^1$-manifold $M^3$ does not have boundary, we use $(M^*,\mathcal{L},b)$ for its labelled orbit space. When $M^3$ does have boundary, we don't need the numeric $b$, for a reason that will be explained shortly. However, in the end, we can include the numeric $b$ back by setting $b=0$ for $S^1$-manifolds with boundaries.  
\end{rmk}

\subsection{Classification of 3d $S^1$-manifolds with boundaries}
Our idea of classifying 3d $S^1$-manifolds with boundaries is to cap off the boundary and then pass to the case of $S^1$-manifolds without boundaries.

Let's keep in mind the following standard equivariant fillings for $\mathbb{T}^2,\, S^2,\,K$
\begin{eqnarray*}
\mathbb{T}^2 &=& S^1 \times S^1 = \partial (S^1 \times D^2)\\
S^2 &=& \partial D^3\\
K &=& S^1 \times_{\mathbb{Z}_2} S^1 = \partial (S^1 \times_{\mathbb{Z}_2} D^2)\\
\end{eqnarray*}
and the standard equivariant filling for a pair of $\mathbb{R}P^2$:
\[
\mathbb{R}P^2 \cup \mathbb{R}P^2 = \partial (\mathbb{R}P^2\times I)
\]

\begin{prop}[Capping off boundary]
\label{capOff}
Let $M$ be a 3d compact, connected, effective $S^1$-manifold with boundary, together with a labelled orbit space $(M^*,\mathcal{L}_M)$. We can cap off the boundary $\partial M$ equivariantly using the standard fillings to form a compact connected 3d effective $S^1$-manifold $N$ without boundary, together with a labelled orbit space $(N^*,\mathcal{L}_N,b=0)$. 
\end{prop}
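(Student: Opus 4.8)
The plan is to perform the capping one boundary component at a time for the self-fillable types and in a single pairing step for the projective planes, and then to read off the labelled orbit space of the resulting closed manifold. First I would record which boundary components can occur: by the classification of $S^1$-actions on closed $2$d manifolds, every component of $\partial M$ is equivariantly diffeomorphic to exactly one of $\mathbb{T}^2,\,S^2,\,\mathbb{R}P^2,\,K$, each carrying its standard and (up to equivariant diffeomorphism) unique effective $S^1$-action. For the first three types I would glue in the corresponding standard filling component by component: a torus is capped by the solid torus $S^1\times D^2$, a sphere by the ball $D^3$, and a Klein bottle by the twisted solid torus $S^1\times_{\mathbb{Z}_2}D^2$. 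In each case the filling carries the standard effective $S^1$-action whose restriction to the boundary agrees with the given action on that component, so by the equivariant collaring neighbourhood theorem the two collars match and the glued object is again a smooth $S^1$-manifold.

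The projective planes cannot be capped individually, since the only standard equivariant filling available is $\mathbb{R}P^2\times I$, which bounds a pair $\mathbb{R}P^2\cup\mathbb{R}P^2$. Here the key enabling fact is Corollary \ref{cor:Parity}: the number $r_p$ of projective-plane boundaries of $M$ is even. I would therefore choose any pairing of the $r_p$ projective planes; because all of them carry the same standard $S^1$-action, each chosen pair is equivariantly diffeomorphic to the two boundary components of $\mathbb{R}P^2\times I$, and gluing in one copy of $\mathbb{R}P^2\times I$ per pair (again matching collars equivariantly) caps them off. After these rounds of gluing every boundary component of $M$ has been closed, so the resulting $N$ is a closed $3$d manifold. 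It stays connected because $M$ is connected and each filling is attached along part of $\partial M$, and the action remains effective because each filling has effective action agreeing with that of $M$ on the overlap.

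Finally I would identify the labelled orbit space $(N^*,\mathcal{L}_N)$ by tracking the orbit spaces of the fillings, all computed by the same local analysis as in Subsection \ref{subsec:neighbour}: the solid torus contributes the disk $D^2$ capping a torus-circle $T_i$; the ball $D^3$ contributes the half-disk whose fixed segment joins the two poles of a sphere edge; the twisted solid torus does the analogous thing for a Klein edge; and $\mathbb{R}P^2\times I$ contributes the rectangle $[0,1]\times I$ whose two opposite sides are a fixed segment and a special-exceptional segment, joining the corresponding $V^F$ and $V^S$ corners of the two glued $RP$-edges. Thus every corner of $M^*$ becomes an interior point, every interval boundary coming from $\partial M$ is absorbed, and the fixed and special-exceptional interval boundaries of $M^*$ close up — along the cycles of the graph $\mathcal{G}$, each corner acquiring exactly one further fixed or special-exceptional segment so that the curves have degree two everywhere — into the circle boundaries of the closed-manifold orbit space $N^*$, while the orbifold points $(E_i,m_i,n_i)$ are untouched. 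Since $M^*$ is a surface with boundary the cross-section obstruction over its principal part already vanishes, and each standard filling carries a canonical section extending the chosen cross-section over its orbit space, so the section extends over all of $N^*$ and we may record $b=0$.

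The main obstacle is the projective-plane step: one must know in advance that the projective-plane boundaries occur in even number and that all of them are equivariantly standard, so that an arbitrary pairing can be realized by $\mathbb{R}P^2\times I$. Corollary \ref{cor:Parity} supplies the parity and the $2$d classification supplies the rigidity; the remaining verifications (smoothness of the equivariant gluings via matching collars, preservation of connectivity and effectiveness, and the corner/interval bookkeeping along the cycles of $\mathcal{G}$) are then routine.
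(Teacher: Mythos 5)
Your Steps for $\mathbb{T}^2,S^2,K$ and for pairing the $\mathbb{R}P^2$'s via Corollary \ref{cor:Parity} coincide with the paper's Steps 1 and 2, and your orbit-space bookkeeping is fine. The genuine gap is in your final claim that the invariant $b$ of the capped-off manifold vanishes because ``each standard filling carries a canonical section extending the chosen cross-section.'' This is not automatic. A cross-section of the principal part of $M$ exists since $M^*$ has boundary, but its restriction to a boundary torus $B\cong S^1\times S^1$ is a $(1,w)$-curve for some winding number $w\in\mathbb{Z}$ depending on the section and on the chosen identification of $B$ with the standard torus, whereas the canonical section $\{1\}\times D^2$ of the filling $S^1\times D^2$ restricts to the meridian, i.e.\ winding $0$. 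The mismatch $w$ at each torus is exactly what accumulates into the Euler-number obstruction $\tilde b$ of the closed manifold, and there is no reason for the total to vanish once you have fixed the gluing maps in your first paragraph (indeed, gluing two solid tori by different identifications of their boundary tori produces lens spaces with arbitrary $b$). You cannot both fix the gluings by matching collars at the outset and later assert that the sections happen to agree.

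The paper closes this gap with a separate Step 3. After the naive capping one obtains $\tilde N$ with some $\tilde b$; if any fixed point or special exceptional orbit is present then $f+s>0$ and the Orlik--Raymond normalization (Theorem \ref{OR}, condition (1)) already forces $\tilde b=0$, so the only problematic case is when $M$ has no such orbits and hence only $\mathbb{T}^2$ boundaries. In that case one re-glues a single solid torus using the twisted equivariant diffeomorphism $Id_{\pm\tilde b}:(e^{i\theta_1},e^{i\theta_2})\mapsto(e^{i(\theta_1\pm\tilde b\theta_2)},e^{i\theta_2})$, which shifts the invariant to $\tilde b\mp\tilde b$, and one of the two signs yields $b=0$. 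An acceptable variant of your approach would be to choose, from the start, the identification of each torus boundary with $\partial(S^1\times D^2)$ so that the section's boundary curve is sent to the meridian; but then you must say this explicitly and verify such a choice is realized by an equivariant diffeomorphism of $\mathbb{T}^2$ (it is, by the same twist maps). As written, the assertion that the section extends is unsupported and false in general.
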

\begin{proof} First, We recall from the Theorem of equivariant collaring neighbourhood that every $S^1$-invariant boundary $B$ has an equivariant neighbourhood diffeomorphic to $B\times [0,1)$. Then, we will prove our capping-off theorem in three steps:
\begin{description}
\item[Step 1] Cap off $\mathbb{T}^2,\, S^2,\,K$ individually

For any boundary component $B$ of one of the types $\mathbb{T}^2,\, S^2,\,K$ with collaring neighbourhood $B\times [0,1)$, let $U$ be the corresponding standard filling among the options $S^1 \times D^2,\,D^3,\,S^1 \times_{\mathbb{Z}_2} D^2$. We can choose the identity map $Id_B:\partial U \rightarrow B$, then cap off $B$ by gluing $U$ with $M$ along $B$ to form $M \cup_{Id_B} U$.

\item[Step 2] Cap off $\mathbb{R}P^2$ in pairs

According to the Corollary \ref{cor:Parity}, the number of boundaries of type $\mathbb{R}P^2$ is even. So we can divide the collection of $\mathbb{R}P^2$ boundaries in pairs. For any one such pair $\mathbb{R}P_0^2 \cup \mathbb{R}P_1^2$ with collaring neighbourhood $(\mathbb{R}P_0^2 \cup \mathbb{R}P_1^2) \times[0,1)$ in $M$, we can choose the identity map $Id_{\mathbb{R}P^2}:\partial(\mathbb{R}P^2 \times[0,1])\rightarrow \mathbb{R}P_0^2 \cup \mathbb{R}P_1^2$, then cap off the pair by gluing $\mathbb{R}P^2 \times[0,1]$ with $M$ to form $M \cup_{Id_{\mathbb{R}P^2}} (\mathbb{R}P^2 \times[0,1])$.

\item[Step 3] Modify the capping so that $b=0$

After the two steps of capping, we now get a compact, connected, effective $S^1$-manifold $\tilde{N}$ without boundary. According to the labelled version of Orlik-Raymond Theorem, $\tilde{N}$ has a labelled orbit space $(\tilde{N},\tilde{\mathcal{L}},\tilde{b})$. Here $\tilde{b}$ is an integer obstruction of finding cross section for principal orbits, i.e. the Euler number for the principal part of $\tilde{N}$. It can be modified by changing local cross sections, see any one of the papers \cite{Ra68,OR68}.

If $\tilde{b}=0$, then we just take $\tilde{N}$ as our target. Otherwise if $\tilde{b}\not=0$, then according to the Orlik-Raymond theorem, there will be no fixed points nor special exceptional orbits in $\tilde{N}$, i.e. $F_{\tilde{N}} \cup SE_{\tilde{N}} = \varnothing$, and so is $F_M \cup SE_M = \varnothing$ in $M$.

This means that $M$ can only have $\mathbb{T}^2$ boundaries that consists of principal orbits. To modify $\tilde{N}$, we will pick any $\mathbb{T}^2$ boundary component of $M$ with a collaring neighbourhood $\mathbb{T}^2\times [0,1)$ and the identity map $Id: \mathbb{T}^2 \rightarrow \mathbb{T}^2$ that we use in step 1 when we try to cap off the $\mathbb{T}^2$. 

Now instead of using the identity map $Id: \mathbb{T}^2 \rightarrow \mathbb{T}^2$ to cap off the chosen $\mathbb{T}^2$, we will use a twisted $S^1$-diffeomorphism 
\[
Id_{\tilde{b}}: \mathbb{T}^2 \rightarrow \mathbb{T}^2 : (e^{i\theta_1},e^{i\theta_2}) \mapsto (e^{i(\theta_1+\tilde{b}\theta_2)},e^{i\theta_2})
\]

If we keep the capping of the other $\mathbb{T}^2$ boundary components of $M$ the same as in step 1, and modify the capping of the chosen $\mathbb{T}^2$ by using either $Id_{\tilde{b}}$ or $Id_{\text{-}\tilde{b}}$ to glue $S^1\times D$ with $M$, then we will get a new manifold without boundary and $b=\tilde{b}\pm\tilde{b}=0\,\mbox{or}\,2\tilde{b}$. Let $N$ be the one that has $b=0$, then we get the target manifold.
\end{description}
\end{proof}

\begin{rmk}
At the step 2 of capping off $\mathbb{R}P^2$ in pairs, the choice of pairs can be arbitrary, so the constructed $N$ is not unique up to $S^1$-diffeomorphism. But if $M$ does not have $\mathbb{R}P^2$ boundaries, then the $N$ is unique up to $S^1$-diffeomorphism.
\end{rmk}

Now we can prove a labelled version of the classification of 3d $S^1$-manifolds with boundary.

\begin{thm}[Labelled classification of 3d $S^1$-manifolds with boundary]
\label{classifyA}
Let $M$ and $\bar{M}$ be two compact connected 3d effective $S^1$-manifolds with boundary, together with labelled orbit spaces $(M^*,\mathcal{L})$ and $(\bar{M}^*,\bar{\mathcal{L}})$ respectively. Then, $M$ and $\bar{M}$ are $S^1$-equivariant diffeomorphic if and only if their labelled orbit spaces $(M^*,\mathcal{L})$ and $(\bar{M}^*,\bar{\mathcal{L}})$ are labelled orbifold diffeomorphic.

Moreover, there is a commutative diagram:
\begin{center}
\begin{tikzpicture}[description/.style={fill=white,inner sep=2pt}]
 \matrix (m) [matrix of math nodes, row sep=2em,
 column sep=2em, text height=1.5ex, text depth=0.25ex]
 {M & \bar{M}\\
 (M^*,\mathcal{L}) & (\bar{M}^*,\bar{\mathcal{L}})\\};
 \path[->] 
 (m-1-1) edge node[above] {$\Phi$} node[below] {$\cong$}(m-1-2)
 (m-1-1) edge node[left] {$\pi$}(m-2-1)
 (m-2-1) edge node[above] {$\varphi$}node[below] {$\cong$}(m-2-2)
 (m-1-2) edge node[right] {$\bar{\pi} $} (m-2-2);
\end{tikzpicture}
\end{center}
that lifts a labelled orbifold diffeomorphism to an $S^1$-diffeomorphism.
\end{thm}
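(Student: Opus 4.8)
The plan is to reduce the statement to the already-established closed case, Theorem \ref{OR-labelled}, by capping off boundaries with the standard fillings of Proposition \ref{capOff}. The forward direction is routine: an $S^1$-diffeomorphism $\Phi\colon M\to\bar M$ carries principal orbits to principal orbits, non-principal orbits to non-principal orbits of the same stabilizer type, and boundary to boundary, so it descends to a map $\varphi\colon M^*\to\bar M^*$ of orbit spaces. One then checks that $\varphi$ respects every label in $\mathcal L$, namely the Seifert invariants at the orbifold points $E_i$, the fixed and special-exceptional boundaries, the torus boundaries $T_i$, and the cycle-graph structure $\mathcal G$ recording the $S^2,\,\mathbb RP^2,\,K$ boundaries; hence $\varphi$ is a labelled orbifold diffeomorphism.

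For the converse, suppose $\varphi\colon(M^*,\mathcal L)\to(\bar M^*,\bar{\mathcal L})$ is a labelled orbifold diffeomorphism. First I would cap off $M$ and $\bar M$ by the standard fillings as in Proposition \ref{capOff}, obtaining closed $S^1$-manifolds $N$ and $\bar N$ with labelled orbit spaces $(N^*,\mathcal L_N,0)$ and $(\bar N^*,\bar{\mathcal L}_{\bar N},0)$. The key point is to choose the cappings \emph{compatibly}: since $\varphi$ matches the torus boundaries, the sphere and Klein-bottle interval-boundaries, and, because $\varphi$ preserves the graph $\mathcal G$ and hence the $RP$-edges, one may pair up the $\mathbb RP^2$ boundaries of $M$ and $\bar M$ so that $\varphi$ carries each chosen pair of $M$ to a chosen pair of $\bar M$ (which is possible since each cycle component of $\mathcal G$ has an even number of $RP$-edges by Corollary \ref{cor:Parity}). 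Because the standard fillings $S^1\times D^2,\,D^3,\,S^1\times_{\mathbb Z_2}D^2,\,\mathbb RP^2\times I$ are equivariantly rigid for their respective boundary types, their orbit-space caps are standard pieces glued along the relevant boundary curves of $M^*$, over which $\varphi$ extends essentially uniquely. This yields a labelled orbifold diffeomorphism $\psi\colon(N^*,\mathcal L_N,0)\to(\bar N^*,\bar{\mathcal L}_{\bar N},0)$ with $\psi|_{M^*}=\varphi$ and $\psi(\text{caps})=\text{caps}$.

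Applying Theorem \ref{OR-labelled} to $\psi$ produces an $S^1$-diffeomorphism $\Psi\colon N\to\bar N$ lifting $\psi$. The remaining, and hardest, step is to arrange that $\Psi$ respects the decomposition $N=M\cup(\text{fillings})$, i.e.\ that $\Psi(M)=\bar M$, so that $\Phi:=\Psi|_M$ is the desired equivariant diffeomorphism and the square commutes by construction. For this I would not treat the lift of Theorem \ref{OR-labelled} as a black box but rerun its step-by-step construction: lift the correspondence over the non-principal orbits $E\cup F\cup SE$, extend it over their equivariant tubular neighbourhoods, and finally extend over the principal part using local cross-sections, which patch into a global $S^1$-map because $b=\bar b=0$. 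Throughout, I would keep the $S^1$-invariant surfaces separating $M$ from the caps preserved; these surfaces are exactly the preimages in $N$ of the gluing curves of $N^*$, and since $\psi$ carries those curves to the corresponding curves of $\bar N^*$ and sends the $M^*$-side to the $\bar M^*$-side, the lift can be chosen to send $\partial M$ to $\partial\bar M$ and the filling region to the filling region. Restricting $\Psi$ to $M$ then gives $\Phi$, completing the proof.
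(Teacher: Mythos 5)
Your proposal is correct and follows essentially the same route as the paper: cap off both manifolds with the standard fillings (pairing the $\mathbb{R}P^2$ boundaries compatibly with $\varphi$), extend $\varphi$ to a labelled orbifold diffeomorphism $\psi$ of the closed orbit spaces, lift $\psi$ to $\Psi\colon N\to\bar N$ via Theorem \ref{OR-labelled}, and restrict to $M$. The only divergence is that the step you flag as hardest---arranging $\Psi(M)=\bar M$---does not require rerunning the lift construction: it is automatic from the commutativity $\bar\pi\circ\Psi=\psi\circ\pi$ of the lifting diagram, since $M=\pi^{-1}(M^*)$, $\bar M=\bar\pi^{-1}(\bar M^*)$ and $\psi(M^*)=\bar M^*$.
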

\begin{proof}
If $M$ and $\bar{M}$ are $S^1$-diffeomorphic, then any $S^1$-diffeomorphism between them induces a labelled orbifold diffeomorphism between their orbit spaces.

Conversely, if $\varphi: (M^*,\mathcal{L})\rightarrow (\bar{M}^*,\bar{\mathcal{L}})$ is a labelled orbifold diffeomorphism. Then we can apply the Capping-Off procedure to $M$ and $\bar{M}$ simultaneously to construct $N$ and $\bar{N}$ without boundary, together with labelled orbit spaces $(N^*,\mathcal{L}_N,b=0)$ and $(\bar{N}^*,\mathcal{L}_{\bar{N}},b=0)$.

If we keep track of the capping-off procedure at the level of labelled orbit spaces,  the $\varphi: (M^*,\mathcal{L})\overset{\cong}{\longrightarrow} (\bar{M}^*,\bar{\mathcal{L}})$ is extended to $\psi: (N^*,\mathcal{L}_N,b=0)\overset{\cong}{\longrightarrow} (\bar{N}^*,\mathcal{L}_{\bar{N}},b=0)$ which can be lifted to be an $S^1$-diffeomorphism $\Psi: N \overset{\cong}{\longrightarrow} \bar{N}$, according to the labelled version of Orlik-Raymond theorem.

Finally, if we restrict the $\Psi$ to $M$, then it gives an $S^1$-diffeomorphism $\Psi|_M: M \overset{\cong}{\longrightarrow} \bar{M}$.
\end{proof}

The unlabelled classification theorem follows from the labelled version and the boundary-less version easily. Using the previous notational system, we denote $\epsilon$ as the orientability of the 2d orbit space, $g$ the genus, $(f,s)$ the numbers of fixed components and special exceptional components not touching $\partial M$ in $M$, $t$ the number of $\mathbb{T}^2$ boundaries of $M$, $\mathcal{G}$ the labelled graph formed from the remaining fixed components, special exceptional components and $S^2,\,K,\,\mathbb{R}P^2$ boundaries of $M$.

\begin{cor}[Classification of 3d $S^1$-manifolds with boundary]
Let $S^1$ act effectively and smoothly on a compact, connected 3d manifold $M$ with boundary. Then the orbit invariants
\[
\big\{(\epsilon,g,f,s,t);(m_1,\,n_1),\ldots,(m_r,\,n_r);\mathcal{G}\big\}
\]
consisting of numeric data and a collection of labelled cycle graphs, 
determine $M$ up to equivariant diffeomorphisms, subject to the following conditions:
\begin{itemize}
\item[(1)]
$0<n_i<m_i,\,(m_i,\,n_i)=1$ if $\epsilon=o$, orientable\\
$0<n_i\leq \frac{m_i}{2},\,(m_i,\,n_i)=1$ if $\epsilon=n$, non-orientable
\item[(2)] For the labelled graph $\mathcal{G}$ with
\begin{align*}
\mathcal{V} &= \{V^F_1,\dots,V^F_{v_f}\}\cup\{V^S_1,\dots,V^S_{v_s}\}\\
\mathcal{E} &= \{F_{f+1},\dots,F_{f_0}\}\cup \{SE_{s+1},\dots,SE_{s_0}\}\cup \{SP_1,\dots,SP_{s_p}\}\cup \{K_1,\dots,K_k\}\cup\{RP_1,\dots,RP_{r_p}\}
\end{align*}
Each $V^F$-vertex is the endpoint of two edges, one edge of the type $F$ and the other edge of either the type $SP$ or $RP$.\\
Each $V^S$-vertex is the endpoint of two edges, one edge of the type $SE$ and the other edge of either the type $K$ or $RP$.\\
The edges of types $F$ and $SE$ connect two $V^F$'s and two $V^S$'s respectively.\\
The edges of types $SP,K,RP$ connect two $V^F$'s, two $V^S$'s, one $V^f$ and one $V^S$ respectively.
\end{itemize}
\end{cor}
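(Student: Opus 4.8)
The plan is to deduce this unlabelled statement from the labelled classification Theorem~\ref{classifyA}, whose hypothesis is an honest geometric equivalence of orbit spaces; the only real work is to show that the numeric tuple together with the cycle graph $\mathcal{G}$ is a complete set of invariants for the labelled orbit space $(M^*,\mathcal{L})$ up to labelled orbifold diffeomorphism. First I would record why no Euler-number invariant $b$ appears: since $M$ has nonempty boundary, $M^*=M/S^1$ is a surface with nonempty boundary, so after deleting small disks around the orbifold points and the collar/tubular pieces around all non-principal orbits the remaining principal part deformation-retracts onto a graph; a principal $S^1$-bundle over it is trivial, hence a global cross-section exists and the obstruction integer vanishes, which is exactly the convention $b=0$ fixed in Proposition~\ref{capOff}. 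I would also note that $\epsilon,g,f,s,t$, the $(m_i,n_i)$, and $\mathcal{G}$ are read directly off $(M^*,\mathcal{L})$ and are therefore invariant under any $S^1$-diffeomorphism, which gives the easy (``only if'') direction.

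For the substantive direction, suppose $M$ and $\bar M$ carry the same data. I would build the required labelled orbifold diffeomorphism $\varphi\colon(M^*,\mathcal{L})\to(\bar M^*,\bar{\mathcal{L}})$ in stages, imitating the rubber-sheet argument behind the closed case (Theorem~\ref{OR-labelled}). By the classification of compact surfaces the underlying surfaces agree once $\epsilon$, $g$, and the total number of boundary circles match; here the boundary circles split into the $f+s+t$ \emph{smooth} circles labelled $F$, $SE$, $T$ and the \emph{polygonal} circles, each a cycle component of $\mathcal{G}$ alternating edges ($F$-, $SE$-, $SP$-, $K$-, $RP$-intervals) with corner vertices ($V^F$, $V^S$). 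I would fix a diffeomorphism of the surfaces-with-boundary that carries interior orbifold points to interior orbifold points with equal Seifert invariants, carries the smooth $F/SE/T$ circles to like-labelled circles, and carries each cycle of $\mathcal{G}$ to an isomorphic cycle respecting edge- and vertex-types; this last matching is possible precisely because condition~(2) forces $\mathcal{G}$ to be a disjoint union of cycles with the prescribed incidences (the Proposition on the structure of $\mathcal{G}$, together with Corollary~\ref{cor:Parity}). Theorem~\ref{classifyA} then lifts $\varphi$ to an $S^1$-diffeomorphism $M\cong\bar M$.

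The normalization conditions in (1) are what make the Seifert labels well defined so that the above matching is unambiguous: for $\epsilon=o$ the range $0<n_i<m_i$ is the standard one, while for $\epsilon=n$ an orientation-reversing chart identifies the rotation data $(m,n)$ with $(m,m-n)$, so choosing the representative with $0<n_i\le m_i/2$ removes this ambiguity and makes equality of labels the correct invariant. For the converse (realizability) I would run the construction backwards: start from a compact surface of the prescribed $\epsilon,g$ with the right number of punctures and boundary circles, put the trivial principal $S^1$-bundle ($b=0$) over its principal part, and glue in the standard equivariant models $S^1\times_{\mathbb{Z}_m}D$, $D\times S^1$, $\text{M\"{o}b}\times S^1$ and the collar models of $\mathbb{T}^2,S^2,\mathbb{R}P^2,K$ dictated by each label; condition~(2) guarantees that along every cycle of $\mathcal{G}$ the interval boundaries and corner points assemble into genuine $S^2$, $K$, or $\mathbb{R}P^2$ boundary components (and legitimate fixed/special-exceptional intervals), so the pieces close up into a bona fide compact $S^1$-manifold realizing the given data.

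I expect the main obstacle to be the combinatorial matching along $\mathcal{G}$: constructing the orbifold diffeomorphism so that it respects the \emph{cyclic} order of edges and vertices on each polygonal boundary circle and is simultaneously compatible with the orientability type $\epsilon$, rather than merely matching unordered counts. Once one verifies that a labelled-cycle isomorphism extends to a boundary diffeomorphism and that the non-orientable reflection ambiguity of $(m,n)$ is absorbed by the $n_i\le m_i/2$ normalization, the reduction to Theorem~\ref{classifyA} is routine.
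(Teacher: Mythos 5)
Your proposal is correct and follows essentially the same route as the paper, which derives this corollary from the labelled classification Theorem~\ref{classifyA} by observing that matching numeric data and an isomorphism of the labelled cycle graphs $\mathcal{G}$ yield a labelled orbifold diffeomorphism of the orbit surfaces (the ``rubber sheet'' argument), which then lifts to an $S^1$-diffeomorphism; your discussion of the vanishing of $b$ and of the $(m,n)\sim(m,m-n)$ normalization makes explicit what the paper leaves implicit. The only superfluous part is the realizability paragraph, since the corollary as stated has no converse clause (that appears only in Theorem~\ref{classifyB}).
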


If we add the $b=0$ to the case of 3d $S^1$-manifolds with boundary, we can synthesize the cases with or without boundary into one single case:

\begin{thm}[Classification of 3d $S^1$-manifolds]
\label{classifyB}
Let $S^1$ act effectively and smoothly on a compact, connected 3d manifold $M$, possibly with boundary. Then the orbit invariants
\[
\big\{b;(\epsilon,g,f,s,t);(m_1,\,n_1),\ldots,(m_r,\,n_r);\mathcal{G}\big\}
\]
consisting of numeric data and a collection of labelled cycle graphs, 
determine $M$ up to equivariant diffeomorphisms, subject to the following conditions:
\begin{itemize}
\item[(1)] 
$b=0$, if $f+s+t>0$ or $\mathcal{G}\not = \varnothing$\\
$b \in \mathbb{Z}$, if $M$ is orientable\\
$b \in \mathbb{Z}_2$, if $M$ is non-orientable\\
$b=0$, if $f+s+t=0,\,\mathcal{G}=\varnothing,\,\epsilon=n$, and $m_i=2$ for some $i$
\item[(2)]
$0<n_i<m_i,\,(m_i,\,n_i)=1$ if $\epsilon=o$\\
$0<n_i\leq \frac{m_i}{2},\,(m_i,\,n_i)=1$ if $\epsilon=n$
\item[(3)] For the labelled graph $\mathcal{G}$ with
\begin{align*}
\mathcal{V} &= \{V^F_1,\dots,V^F_{v_f}\}\cup\{V^S_1,\dots,V^S_{v_s}\}\\
\mathcal{E} &= \{F_{f+1},\dots,F_{f_0}\}\cup \{SE_{s+1},\dots,SE_{s_0}\}\cup \{SP_1,\dots,SP_{s_p}\}\cup \{K_1,\dots,K_k\}\cup\{RP_1,\dots,RP_{r_p}\}
\end{align*}
Each $V^F$-vertex is the endpoint of two edges, one edge of the type $F$ and the other edge of either the type $SP$ or $RP$.\\
Each $V^S$-vertex is the endpoint of two edges, one edge of the type $SE$ and the other edge of either the type $K$ or $RP$.\\
The edges of types $F$ and $SE$ connect two $V^F$'s and two $V^S$'s respectively.\\
The edges of types $SP,K,RP$ connect two $V^F$'s, two $V^S$'s, one $V^f$ and one $V^S$ respectively.
\end{itemize}
Conversely, any such set of invariants can be realized as a 3d manifold with an effective $S^1$-action.
\end{thm}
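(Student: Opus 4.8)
The plan is to deduce Theorem \ref{classifyB} not by a fresh geometric construction but by \emph{synthesizing} the two regimes already settled in the excerpt: the boundaryless regime, governed by the Orlik--Raymond Theorem \ref{OR} (and its sharper labelled form Theorem \ref{OR-labelled}), and the regime with boundary, governed by Theorem \ref{classifyA} together with the unlabelled Corollary immediately preceding the present statement. The entire argument is thus a bookkeeping argument: one checks that the single list of conditions (1)--(3) specializes correctly to each case, and that the dichotomy ``$\partial M=\varnothing$ versus $\partial M\neq\varnothing$'' is detected by the invariants themselves.

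First I would dispose of the closed case. If $\partial M=\varnothing$, then there are no $\mathbb{T}^2,\,S^2,\,K,\,\mathbb{R}P^2$ boundary components, so $t=0$ and, since the edges of $\mathcal{G}$ can only come from components touching $\partial M$ or from $S^2,K,\mathbb{R}P^2$ boundaries, $\mathcal{G}=\varnothing$. The data then collapses to $\{b;(\epsilon,g,f,s);(m_1,n_1),\dots,(m_r,n_r)\}$ and Theorem \ref{OR} applies verbatim: condition (3) is vacuous, condition (2) is literally the Orlik--Raymond condition, and in condition (1) the hypothesis ``$f+s+t>0$ or $\mathcal{G}\neq\varnothing$'' reduces to $f+s>0$, matching the clause $b=0$ when $f+s>0$. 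Here the key identification to record is that $f+s+t=0$ with $\mathcal{G}=\varnothing$ is equivalent to the Seifert regime $F\cup SE=\varnothing$, so the free-$b$ clauses ($b\in\mathbb{Z}$ orientable, $b\in\mathbb{Z}_2$ non-orientable, $b=0$ when $\epsilon=n$ and some $m_i=2$) fire exactly where Theorem \ref{OR} allows $b$ to vary.

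Next, if $\partial M\neq\varnothing$, the crucial observation is that every boundary component forces the left-hand side of condition (1) to be nontrivial: a torus boundary contributes to $t$, while each $S^2$, $K$ or $\mathbb{R}P^2$ boundary contributes an $SP$-, $K$- or $RP$-edge to $\mathcal{E}$, so $\mathcal{G}\neq\varnothing$. Consequently $f+s+t>0$ or $\mathcal{G}\neq\varnothing$, condition (1) forces $b=0$, and this agrees with the convention (noted in the remark after the labellings and in Proposition \ref{capOff}) that $S^1$-manifolds with boundary carry no Euler obstruction. I then invoke the Corollary preceding the statement, whose hypotheses are precisely condition (2) together with the graph constraints (3) supplied by the Proposition showing $\mathcal{G}$ is a union of cycle graphs. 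The uniqueness half, i.e.\ that $S^1$-diffeomorphism of $M$ is equivalent to labelled orbifold diffeomorphism of $(M^*,\mathcal{L})$, is exactly Theorem \ref{classifyA}, so nothing new is needed there. For the converse (realizability), the closed regime is realized by Theorem \ref{OR}; in the boundary regime I would assemble the labelled orbifold surface with corners dictated by $(\epsilon,g)$, the orbifold points $(m_i,n_i)$, the circle boundaries counted by $f,s,t$, and the cycle graph $\mathcal{G}$, using Corollary \ref{cor:Parity} and the incidence rules in (3) to guarantee that $\mathcal{G}$ can be built from the local corner pieces of Section 3, and then lift to the total space via the capping-off construction of Proposition \ref{capOff} and the labelled Orlik--Raymond machinery.

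The step I expect to be most delicate is the interface bookkeeping in condition (1): one must confirm that the single clause ``$b=0$ if $f+s+t>0$ or $\mathcal{G}\neq\varnothing$'' simultaneously reproduces the Orlik--Raymond clause $b=0$ when $f+s>0$ in the closed case (where $M^*$ has boundary but $M$ does not) and enforces the convention $b=0$ in the genuinely-with-boundary case, while leaving $b$ free exactly in the closed Seifert regime. Making this airtight requires using the definitions carefully: that $f,s$ count components \emph{not} touching $\partial M$, that $\mathcal{G}$ simultaneously encodes the touching components and the $S^2,K,\mathbb{R}P^2$ boundaries, and hence that ``$f+s+t=0$ and $\mathcal{G}=\varnothing$'' is equivalent to ``$\partial M=\varnothing$ and $F\cup SE=\varnothing$.'' Once this equivalence is verified, the free-$b$ clauses provably never collide with a manifold that actually has boundary, and the two prior theorems glue into the single uniform statement.
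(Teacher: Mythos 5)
Your proposal is correct and follows essentially the same route as the paper: the classification half is obtained by reducing to Theorem \ref{OR} (closed case) and Theorem \ref{classifyA} with its Corollary (boundary case), and the realization half proceeds by building the labelled orbifold surface with corners, capping off $t$ and $\mathcal{G}$ to reach a closed set of Orlik--Raymond invariants, realizing that closed manifold, and then reversing the capping to recover the boundary. Your extra care with the bookkeeping of condition (1) — in particular the equivalence of ``$f+s+t=0$ and $\mathcal{G}=\varnothing$'' with ``$\partial M=\varnothing$ and $F\cup SE=\varnothing$'' — is a worthwhile elaboration of a point the paper leaves implicit.
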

\begin{proof}
We have proved the classification part. So we only need to address the realization part. 

Now suppose we are given a set of orbit invariants
\[
\big\{b;(\epsilon,g,f,s,t);(m_1,\,n_1),\ldots,(m_r,\,n_r);\mathcal{G}\big\}
\]
subject to the mentioned conditions. 

If $t=0$ and $\mathcal{G}=\varnothing$, then the orbit invariants
$\big\{b;(\epsilon,g,f,s);(m_1,\,n_1),\ldots,(m_r,\,n_r)\big\}$
are the ones used in the Orlik-Raymond Theorem \ref{OR}, and hence realizable as a closed 3d $S^1$-manifold.

If $t\not=0$ or $\mathcal{G}\not=\varnothing$, then $b=0$ by the condition (1), and the orbit invariants $\big\{b=0;(\epsilon,g,f,s,t);(m_1,\,n_1),\ldots,(m_r,\,n_r);\mathcal{G}\big\}$ presumably come from a 3d $S^1$-manifold with boundary. The realization process takes place in three steps:
\begin{description}
\item[Step 1] Realizing the 2d orbit space

We can first choose a compact surface with orientability $\epsilon$, genus $g$, $f+s+t$ circle boundaries, $r$ interior orbifold points with Seifert invariants $(m_1,\,n_1),\ldots,(m_r,\,n_r)$, and $\mathcal{G}$ as the remaining boundary with corners. We then label $f$ circle boundaries as $F$, $s$ circle boundaries as $SE$, and the remaining $t$ circle boundaries as $T$.
	
\item[Step 2] Capping off the $t,\,\mathcal{G}$ and realizing a closed 3d $S^1$-manifold 

Notice that the capping-off procedure in the Proposition \ref{capOff} also makes sense at the level of orbit space. More specifically, we can cap off the $f$ fixed circle boundary with disks, replace the edges $SP$ and $K$ in $\mathcal{G}$ by edges $F$ and $SE$ respectively and remove the corner points. We can also sew up two $\mathbb{R}P$ edges in $\mathcal{G}$ hence get them removed. After the capping-off of $t,\,\mathcal{G}$, we get a new set of orbit invariants $\big\{b=0;(\epsilon,g',f',s',t=0);(m_1,\,n_1),\ldots,(m_r,\,n_r);\mathcal{G}=\varnothing \big\}$, which is realizable as a closed 3d $S^1$-manifold.

\item[Step 3] Realizing the 3d $S^1$-manifold with boundary

Using the closed 3d $S^1$-manifold from the Step 2, we can precisely reverse the capping-off procedure by removing $t$ solid tori of principal orbits to produce $t$ torus boundaries, removing $D^3$ near fixed points, $S^1\times_{\mathbb{Z}_2}D^2$ near special exceptional orbits and $\mathbb{R}P^2 \times [0,1]$ between fixed points and special exceptional orbits to produce boundaries of types $S^2,K,\mathbb{R}P^2$. Now $g,f,s,t,\mathcal{G}$ are recovered in the orbit space after the reverse procedure. Hence the orbit invariants $\big\{b=0;(\epsilon,g,f,s,t);(m_1,\,n_1),\ldots,(m_r,\,n_r);\mathcal{G}\big\}$ are realized as a 3d $S^1$-manifold with boundary.
\end{description}  
\end{proof}

\begin{rmk}
As we have seen in the above proof, a 3d $S^1$-manifold is closed if and only if $t=0$ and $\mathcal{G}=\varnothing$.
\end{rmk}

\vskip 20pt
\section{Equivariant cohomology of 3d $S^1$-manifolds}
\vskip 15pt

The classification of 3d $S^1$-manifolds (possibly with boundaries) in terms of numeric invariants and graphs gives us an $S^1$-equivariant stratification of every such manifold and enables us to calculate all kinds of topological data. For example, the fundamental groups, ordinary homology and cohomology with $\mathbb{Z}$ or $\mathbb{Z}_p$ coefficients have been computed extensively for closed 3d $S^1$-manifolds in literature \cite{JN83, BHZZ00, BLPZ03, BZ03}, and now can be generalized to 3d $S^1$-manifolds with boundaries, using our classification Theorem \ref{classifyB}. But not much has been discussed for $S^1$-equivariant cohomology, which is the goal of current section.

In the following subsections, we will first prove our core Theorem \ref{equivCohom1} in full generality. When we explore more delicate computational invariants, we will try to keep the presentation of results in a manageable way but perhaps with a slight loss of generality. 

\subsection{Some basic facts about equivariant cohomology}
In this paper, the coefficient of cohomology will always be $\mathbb{Q}$. For a group action of $G$ on $M$, the equivariant cohomology ring is defined using the Borel construction $H^*_G (M) = H^*(EG\times_G M)$, where $H^*(-)$ is the ordinary simplicial cohomology theory, $EG$ is the universal principal $G$-bundle and $EG\times_G M$ is the associated bundle with fibre $M$. The pull-back $\pi^*:H^*_G (pt)\longrightarrow H^*_G (M)$ of the trivial map $\pi:M\longrightarrow pt$ gives $H^*_G (M)$ a module structure of the ring $H^*_G (pt)$. 

In general, the equivariant cohomology $H^*_G (M)$ is not the same as the ordinary cohomology $H^*(M/G)$ of the orbit space $M/G$. If we choose any fibre inclusion $\iota: M \rightarrow EG\times M$ and pass to the orbit spaces $\bar{\iota}:M/G \rightarrow EG\times_G M$, then the pull-back $\bar{\iota}^*: H^*_G (M)=H^*(EG\times_G M)\rightarrow H^*(M/G)$ gives a natural map between $H^*_G (M)$ and $H^* (M/G)$.

We will need some basic facts to compute equivariant cohomology, see any of the expository surveys (\cite{Go,Ty05}) for details.

The first set of facts is about equivariant cohomology of homogeneous spaces, i.e. spaces with single orbits:
\begin{fact}
Let $G$ be a compact Lie group, and $H$ a closed Lie subgroup. Denote $BG=EG/G$ and $BH=EH/H$ for the classifying space of $G$-bundles and $H$-bundles respectively. Then,
\begin{itemize}
\item
$H^*_G(pt)=H^*(EG/G)=H^*(BG)$
\item
$H^*_G(G/H)=H^*_H(pt)=H^*(BH)$
\end{itemize}
\end{fact}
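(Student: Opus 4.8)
The plan is to prove both identities directly from the definition of the Borel construction $H^*_G(M) = H^*(EG\times_G M)$, treating the two bullet points separately. The first identity requires essentially no work: taking $M=pt$, the associated bundle degenerates to $EG\times_G pt = (EG\times pt)/G = EG/G$, which is by definition $BG$. Applying $H^*(-)$ then gives $H^*_G(pt)=H^*(EG/G)=H^*(BG)$.

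For the second identity, the key step is to identify the Borel construction of the homogeneous space with a quotient of $EG$ itself. First I would exhibit a homeomorphism $EG\times_G(G/H)\cong EG/H$. Writing a point of the Borel construction as a class $[e,gH]$ subject to $[e,gH]=[eg'^{-1},g'gH]$ for $g'\in G$, I would define the map by $[e,gH]\mapsto (eg)H$; this is well defined because replacing $gH$ by $ghH$ only changes $eg$ by right multiplication by $h\in H$, and it is $G$-invariant since the factor $g'^{-1}g'$ cancels. Its inverse sends the class $eH$ to $[e,H]$, where $H$ denotes the identity coset, and one checks directly from the defining relation that this is well defined and two-sided. This reduces the problem to computing $H^*(EG/H)$.

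The heart of the argument is the observation that $EG$, equipped with the $H$-action restricted from the free $G$-action, is itself a legitimate model for $EH$. Indeed $EG$ is contractible and the restricted $H$-action on it is still free, and these two properties characterize a universal $H$-space up to $H$-equivariant homotopy equivalence. Hence $EG\simeq_H EH$, so passing to quotients gives a homotopy equivalence $EG/H\simeq EH/H=BH$. Combining the three steps yields
\[
H^*_G(G/H)=H^*\big(EG\times_G(G/H)\big)=H^*(EG/H)=H^*(BH),
\]
and the rightmost group equals $H^*_H(pt)$ by the first identity applied to the group $H$.

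The step I expect to require the most care is the recognition that the restricted action makes $EG$ a model for $EH$: this uses the uniqueness of classifying spaces (equivalently, the universal property of $EH$ among contractible free $H$-spaces), which is where the freeness of the original $G$-action and the contractibility of $EG$ both get used. The homeomorphism $EG\times_G(G/H)\cong EG/H$ is a routine but convention-sensitive bookkeeping exercise, so I would fix the left-versus-right conventions for the $G$-action on $EG$ at the outset to keep the verification clean.
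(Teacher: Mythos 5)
Your argument is correct, and it is the standard one. Note that the paper does not actually prove this Fact: it is stated as background, with a pointer to the expository surveys of Gonzales and Tymoczko, so there is no in-paper proof to compare against --- what you wrote is essentially the argument those references give. Both of your steps check out: the first bullet is immediate from the Borel construction, the homeomorphism $EG\times_G(G/H)\cong EG/H$ via $[e,gH]\mapsto (eg)H$ with inverse $eH\mapsto [e,H]$ is verified correctly, and the recognition of $EG$ with the restricted $H$-action as a model for $EH$ is indeed the crux. The one point where you could be slightly more careful is the appeal to uniqueness of classifying spaces: that argument needs $EG\to EG/H$ to be a principal $H$-bundle, not merely the quotient of a free action. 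For a closed subgroup $H$ of a compact Lie group acting on a reasonable (metrizable or CW) model of $EG$ this follows from the existence of local slices (Gleason's theorem), so the gap is easily filled, and it is harmless in the context of this paper, which only ever invokes the Fact for $G=S^1$ and $H\in\{1,\mathbb{Z}_m,S^1\}$.
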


The second set of facts is about equivariant cohomology of extremal types of group actions:
\begin{fact}
Let a compact Lie group $G$ act on a compact manifold $M$.
\begin{itemize}
\item
If the action $G\curvearrowright M$ is free, then $H^*_G(M)=H^*(M/G)$.
\item
If the action $G\curvearrowright M$ is trivial, then $H^*_G(M)=H^*(M)\otimes H^*_G(pt)$.
\end{itemize}
\end{fact}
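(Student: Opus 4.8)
The plan is to compute the Borel construction $EG\times_G M$ directly in each of the two extremal situations and then read off its cohomology, using the previously recorded identity $H^*_G(pt)=H^*(BG)$.

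For the free case, I would study the map $p\colon EG\times_G M\to M/G$ given by $[e,m]\mapsto [m]$, which is well defined since both spaces are quotients by the same diagonal $G$-action. Because $G$ acts freely, $M\to M/G$ is a principal $G$-bundle, so locally over a trivializing open set $U\subseteq M/G$ we have $EG\times_G(U\times G)\cong U\times(EG\times_G G)\cong U\times EG$; thus $p$ is a fibre bundle with fibre $EG$. Since $EG$ is contractible, the Serre spectral sequence of $p$ is concentrated in the bottom row and collapses, giving $H^*(EG\times_G M)\cong H^*(M/G)$, i.e. $H^*_G(M)\cong H^*(M/G)$. (Equivalently, a fibre bundle with contractible fibre is a weak homotopy equivalence onto its base.)

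For the trivial case, the diagonal $G$-action on $EG\times M$ moves only the $EG$-factor, so the Borel construction splits as $EG\times_G M=(EG/G)\times M=BG\times M$. I would then apply the K\"unneth theorem, which carries no correction term because the coefficients lie in the field $\mathbb{Q}$, to obtain $H^*(BG\times M)\cong H^*(BG)\otimes H^*(M)$; recalling $H^*_G(pt)=H^*(BG)$ then yields $H^*_G(M)\cong H^*(M)\otimes H^*_G(pt)$.

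I expect the only genuine subtlety to lie in the free case: namely, verifying that $p$ is honestly a fibre bundle and that contractibility of the fibre forces a cohomology isomorphism even when the base $M/G$ is not simply connected. The spectral sequence argument handles this cleanly, since a contractible fibre contributes trivial cohomology in positive degrees, so no higher differentials or extension problems can arise. The trivial case is then immediate once the product decomposition and the field hypothesis on the coefficients are in place.
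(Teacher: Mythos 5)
Your argument is correct and is exactly the standard proof of this Fact, which the paper itself does not spell out but delegates to the cited surveys: the free case via the fibration $EG\times_G M\to M/G$ with contractible fibre $EG$ (local triviality coming from the fact that $M\to M/G$ is a principal $G$-bundle, which for a free action of a compact group follows from the Slice Theorem already recalled in the paper), and the trivial case via $EG\times_G M=BG\times M$ and the K\"unneth theorem over the field $\mathbb{Q}$, where finiteness of $H^*(M)$ for compact $M$ makes the cross product an isomorphism. Your handling of the non-simply-connected base is also fine, since a connected contractible fibre forces the $E_2$-page to be concentrated in the bottom row regardless of the fundamental group of $M/G$.
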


In particular, when $G=S^1$, there are three types of orbits: $S^1,\,S^1/\mathbb{Z}_m,\,S^1/S^1$. For a principal orbit, $H^*_{S^1}(S^1)=H^*(pt)$. For an exceptional orbit $S^1/\mathbb{Z}_m$, the classifying space $B\mathbb{Z}_m=S^{\infty}/\mathbb{Z}_m$ is the infinite Lens space with cohomology in $\mathbb{Q}$-coefficient the same as $H^*(pt)$. For a fixed point $S^1/S^1$, the classifying space $BS^1=\mathbb{C}P^\infty$ is the infinite projective space with cohomology $\mathbb{Q}[u]$ a polynomial ring, where the parameter $u$ is the generator of $H^2(\mathbb{C}P^1)$ in degree 2.
\begin{center}
\begin{tabular}{c c c c}
\toprule
 & Principal orbit & Exceptional orbit & Singular orbit\\
\midrule
Orbit $\mathcal{O}$ & $S^1$ & $S^1/\mathbb{Z}_m$ & $S^1/S^1$\\
$H^*_{S^1}(\mathcal{O},\mathbb{Q})$ & $H^*(pt,\mathbb{Q})$ & $H^*(pt,\mathbb{Q})$ & $\mathbb{Q}[u]$\\
\bottomrule
\end{tabular}
\end{center}

The third set of facts enables us to compute equivariant cohomology by deforming, cutting and pasting, similar to the computation in ordinary cohomology:

\begin{fact}
Let $U_1,\,U_2$ be two $G$-spaces, and $A,\,B$ be two $G$-subspaces of a $G$-space $X$.
\begin{description}
\item[Homotopy invariance]
If $\varphi: U_1 \overset{\simeq}{\longrightarrow} U_2$ is a $G$-homotopic equivalence, then $\varphi^*: H^*_G(U_2) \overset{\cong}{\longrightarrow} H^*_G(U_1)$ is an isomorphism.
\item[Mayer-Vietoris sequence]
If $X=A^\circ \cup B^\circ$ is the union of interiors of $A$ and $B$, then there is a long exact sequence:
\[
\cdots \longrightarrow H^i_G(X) \longrightarrow H^i_G(A)\oplus H^i_G(B) \longrightarrow H^i_G(A\cap B) \overset{\delta}{\longrightarrow} H^{i+1}_G(X) \longrightarrow \cdots
\]
\end{description} 
\end{fact}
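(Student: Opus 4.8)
The plan is to derive both statements from their classical non-equivariant counterparts by pushing everything through the Borel construction functor $M \mapsto EG \times_G M$, under which $H^*_G(-)$ is \emph{by definition} $H^*(EG \times_G -)$. The only genuine content is that this functor is well-behaved enough to transport ordinary homotopy invariance and the ordinary Mayer--Vietoris sequence; once that transport is established, each assertion becomes an immediate instance of the corresponding ordinary fact applied to the total space of the associated bundle.

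For homotopy invariance, I would first isolate the key formal observation that the Borel construction converts $G$-homotopies into ordinary homotopies. Concretely, if $H \colon U_1 \times I \to U_2$ is a $G$-homotopy, where $G$ acts trivially on the interval $I$, then the natural homeomorphism $EG \times_G (U_1 \times I) \cong (EG \times_G U_1) \times I$ together with $\mathrm{id}_{EG} \times_G H$ yields an ordinary homotopy between the induced maps on Borel constructions. Applying this to a chosen $G$-homotopy inverse $\psi \colon U_2 \to U_1$ and to the two $G$-homotopies $\psi \varphi \simeq_G \mathrm{id}_{U_1}$ and $\varphi \psi \simeq_G \mathrm{id}_{U_2}$ shows that $\mathrm{id}_{EG} \times_G \varphi$ is an ordinary homotopy equivalence. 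Ordinary homotopy invariance of $H^*(-)$ then delivers the asserted isomorphism $\varphi^* \colon H^*_G(U_2) \xrightarrow{\cong} H^*_G(U_1)$.

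For the Mayer--Vietoris sequence, I would apply the Borel construction to the $G$-invariant decomposition $X = A^\circ \cup B^\circ$. Since the interiors $A^\circ, B^\circ$ are $G$-invariant open subsets (because $G$ acts by homeomorphisms) and the quotient map $EG \times X \to EG \times_G X$ is open, the subsets $EG \times_G A^\circ$ and $EG \times_G B^\circ$ are open in $EG \times_G X$, and their union is $EG \times_G(A^\circ \cup B^\circ) = EG \times_G X$. Moreover the functor preserves intersections, $(EG \times_G A) \cap (EG \times_G B) = EG \times_G (A \cap B)$, since $A, B$ are $G$-invariant and hence no equivalence class is split. Feeding this excisive open cover into the ordinary singular Mayer--Vietoris long exact sequence and rewriting each term via $H^*_G(-) = H^*(EG \times_G -)$ produces exactly the stated equivariant sequence, with connecting map $\delta$ inherited from the ordinary one.

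The main obstacle is purely technical and concerns the infinite-dimensionality of $EG$: I must ensure the ordinary cohomology operations above are legitimate on the non-compact total space $EG \times_G X$. I would handle this either by working throughout with singular cohomology, for which both homotopy invariance and the open-cover Mayer--Vietoris sequence hold over arbitrary topological spaces, or by replacing $EG$ with a colimit of finite-dimensional compact approximations $E_nG$ and passing to the limit, checking degreewise stabilization. A secondary point requiring explicit care is that $\mathrm{id}_{EG} \times_G (-)$ genuinely respects the open-inclusion and intersection relations among $G$-subspaces; this is precisely where openness of the $G$-quotient map is used, and I would record that statement before invoking it.
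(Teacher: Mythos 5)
The paper does not actually prove this Fact: it is stated as standard background, with the reader referred to the expository surveys \cite{Go,Ty05}. Your argument is correct, and it is precisely the standard proof those sources supply: transport both statements through the Borel construction, using the homeomorphism $EG\times_G(U_1\times I)\cong (EG\times_G U_1)\times I$ (valid since $G$ acts trivially on $I$) to convert $G$-homotopies into ordinary homotopies, and using openness of the quotient map $EG\times X\to EG\times_G X$ together with $G$-invariance of $A^\circ$, $B^\circ$ to obtain an excisive cover with $(EG\times_G A)\cap(EG\times_G B)=EG\times_G(A\cap B)$. Two minor points to record if you write this out in full: the identification $EG\times_G(U_1\times I)\cong(EG\times_G U_1)\times I$ needs that crossing with $I$ preserves quotient maps, which holds because $I$ is locally compact; and since you work with singular cohomology, which satisfies homotopy invariance and Mayer--Vietoris on arbitrary topological spaces, your first option already settles the infinite-dimensionality of $EG$ and the finite-dimensional approximations $E_nG$ are unnecessary (the paper's nominal use of simplicial cohomology agrees with singular cohomology on the CW models relevant here, so nothing is lost).
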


\begin{rmk}
Besides the Borel model of equivariant cohomology, there are also Cartan model and Weil model (cf. Guillemin-Sternberg \cite{GS99}) using equivariant de Rham theory. In this paper, we prefer the Borel model because the homotopy invariance and Mayer-Vietoris sequence are more natural for Borel model, from the topological rather than the differential point of view. 
\end{rmk}

The fourth set of facts deals with equivariant cohomology of product spaces:
\begin{fact}
Let $G \curvearrowright M$ and $H \curvearrowright N$ be two group actions on manifolds. Then, for the product action $G\times H \curvearrowright M\times N$, we get
\[
H^*_{G\times H}(M\times N)=H^*_{G}(M)\otimes H^*_{H}(N)
\]
Especially, for the action $G \curvearrowright M\times N$ where $N$ is acted by $G$ trivially, we get
\[
H^*_{G}(M\times N)=H^*_{G}(M)\otimes H^*(N)
\]
\end{fact}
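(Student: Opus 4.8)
The plan is to reduce the equivariant K\"unneth formula to the ordinary one by choosing a convenient model for the classifying space of the product group. First I would observe that if $EG$ and $EH$ are contractible spaces carrying free $G$- and $H$-actions, then $EG\times EH$ is contractible and carries a free $(G\times H)$-action; hence one may take $E(G\times H)=EG\times EH$ as a model for the universal $(G\times H)$-bundle. This is the structural input that makes the whole argument work.

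With this model in hand, the key step is to identify the Borel construction of the product action with a product of Borel constructions. Explicitly, in $(EG\times EH)\times_{G\times H}(M\times N)$ the equivalence relation $(e_G,e_H,m,n)\sim(e_G g,e_H h,g^{-1}m,h^{-1}n)$ separates into the two factors, yielding a natural homeomorphism
\[
(EG\times EH)\times_{G\times H}(M\times N)\;\cong\;(EG\times_G M)\times(EH\times_H N).
\]
Applying the ordinary K\"unneth theorem with field coefficients $\mathbb{Q}$ — so that the Tor terms vanish — then gives
\[
H^*_{G\times H}(M\times N)=H^*\big((EG\times_G M)\times(EH\times_H N)\big)=H^*_G(M)\otimes H^*_H(N),
\]
which is the asserted formula.

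For the special case, I would take $H$ to be the trivial group $\{1\}$ acting on $N$, so that $EH$ is a point, $EH\times_H N=N$, and $H^*_H(N)=H^*(N)$; the general formula then specializes directly. Alternatively, one can argue in one stroke: when $G$ acts trivially on the $N$-factor, the relation $(e,m,n)\sim(eg,g^{-1}m,n)$ leaves the coordinate $n$ untouched, so $EG\times_G(M\times N)\cong(EG\times_G M)\times N$, and K\"unneth again finishes the computation.

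The only point requiring care — and the main obstacle — is the applicability of the K\"unneth theorem: the cross product is an isomorphism over a field precisely when one of the factors has cohomology of finite type. For a compact Lie group $G$ the classifying space $BG$ has finitely generated rational cohomology in each degree, and since $M$ and $N$ are compact manifolds the Borel constructions $EG\times_G M$ likewise have finite type in each degree; hence K\"unneth applies and the formula holds without obstruction. In the cases relevant to this paper, where $G=S^1$ and $BS^1=\mathbb{C}P^\infty$, this finiteness is immediate, so no extra hypotheses are needed.
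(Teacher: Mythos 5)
Your proof is correct: the identification $E(G\times H)=EG\times EH$, the resulting splitting of the Borel construction, and the K\"unneth theorem over $\mathbb{Q}$ (with the finite-type caveat properly addressed via $BG$ for compact Lie $G$ and compactness of $M$, $N$) constitute the standard argument. The paper itself states this as a Fact without proof, deferring to expository surveys, and your write-up supplies exactly the argument those references would give, so there is nothing to compare beyond noting that you have filled in the omitted details correctly.
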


\subsection{A short exact sequence}
Let $S^1$ act effectively on a compact connected 3d manifold $M$, possibly with boundary. We will compute the equivariant cohomology group $H^*_{S^1}(M,\mathbb{Q})$ by cutting and pasting, with the help of the classification theorem from previous sections.

As we have seen from the previous computation of $H^*_{S^1}(\mathcal{O})$ for any $S^1$-orbit $\mathcal{O}$. The $S^1$-equivariant cohomology in $\mathbb{Q}$ coefficient does not distinguish principal orbit $S^1$ from exceptional orbit $S^1/\mathbb{Z}_m$ or special exceptional orbit $S^1/\mathbb{Z}_2$. However, there is big difference between the $S^1$-equivariant cohomology of fixed point and non-fixed orbit.

If a 3d $S^1$-manifold $M$ does not have fixed points, we would hope that its $S^1$-equivariant cohomology is the ordinary cohomology of the orbit space $M/S^1$. Actually, a more general statement is true due to Satake \cite{Sa56}. The version here is taken from Duistermaat's lecture notes \cite{Du94}.

\begin{dfn}
An action of a Lie group $G$ on a manifold $M$ is \textbf{locally free}, if for any $x \in M$, the isotropy group $G_x$ is finite.
\end{dfn}

\begin{thm}[Satake \cite{Sa56}]
If a compact Lie group $G$ acts locally freely on a compact manifold $M$, then $M/G$ is an orbifold, and $H^*_G(M,\mathbb{R})=H^*(M/G,\mathbb{R})$.
\end{thm}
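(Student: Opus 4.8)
The plan is to realize the Borel construction $EG\times_G M$ as fibering over the orbit space $M/G$ with fibres that are rationally acyclic, so that the induced map on $\mathbb{R}$-cohomology is an isomorphism. I would begin with the orbifold claim. By the Slice Theorem, for each $x\in M$ the orbit $G\cdot x$ has an equivariant tubular neighbourhood $G\times_{G_x}V_x$; since the action is locally free, the isotropy group $G_x$ is finite, and the orbit space of this neighbourhood is $V_x/G_x$, a quotient of a vector space by a finite linear group. These charts $V_x/G_x$ furnish $M/G$ with the structure of an orbifold.

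Next I would examine the projection $\pi:EG\times_G M\to M/G$ induced by $M\to M/G$. Over a point $[x]\in M/G$ the fibre is $EG\times_G (G\cdot x)=EG\times_G (G/G_x)=EG/G_x=BG_x$, the classifying space of the finite group $G_x$. The crucial arithmetic input is that for a finite group the higher cohomology over a field of characteristic zero vanishes, so that $H^q(BG_x,\mathbb{R})=H^q(\mathrm{pt},\mathbb{R})$; that is, every fibre of $\pi$ is $\mathbb{R}$-acyclic. This is where the restriction to $\mathbb{R}$ (or $\mathbb{Q}$) coefficients is essential: with $\mathbb{Z}$ or $\mathbb{Z}_p$ coefficients the spaces $BG_x$ carry torsion and the statement fails.

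To promote fibrewise acyclicity to a global isomorphism I would pass to the Leray spectral sequence of $\pi$, with $E_2^{p,q}=H^p(M/G,\mathcal{H}^q)$ where $\mathcal{H}^q=R^q\pi_*\underline{\mathbb{R}}$. To compute the stalk of $\mathcal{H}^q$ at $[x]$ I would use the slice model: the preimage under $\pi$ of the chart $V_x/G_x$ is $EG\times_{G_x}V_x$, and the linear contraction of $V_x$ to the origin is $G_x$-equivariant, so this space deformation retracts onto $EG\times_{G_x}\{0\}=BG_x$. Hence the stalks are $\mathbb{R}$ in degree $0$ and vanish otherwise, giving $\mathcal{H}^0=\underline{\mathbb{R}}$ and $\mathcal{H}^q=0$ for $q>0$. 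The spectral sequence then collapses to $H^*_G(M,\mathbb{R})=H^*(M/G,\mathbb{R})$.

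The main obstacle is precisely this globalization step: $\pi$ is \emph{not} a fibre bundle, since the isotropy type $BG_x$ jumps along the singular strata of the orbifold, so one cannot quote a Leray--Hirsch or fibre-bundle argument directly and must instead control the varying fibres sheaf-theoretically — equivalently, one invokes the Vietoris--Begle mapping theorem, whose hypotheses (properness, here coming from compactness of $M$, together with the fibrewise acyclicity just established) are exactly what the slice analysis verifies. Care must also be taken that $EG\times_G M$ is infinite-dimensional, which is handled as usual by finite-dimensional approximations to $EG$.
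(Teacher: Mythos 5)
The paper does not prove this statement: it is quoted verbatim from Satake via Duistermaat's notes, so there is no in-paper argument to measure you against. What the paper \emph{does} prove is the special case it actually needs (Proposition 4.2: $H^*_{S^1}(M,\mathbb{Q})=H^*(M/S^1,\mathbb{Q})$ for fixed-point-free 3d $S^1$-manifolds), and it does so by a completely different, more elementary route: induction on the number of non-principal components, comparing the equivariant and ordinary Mayer--Vietoris sequences via the Five Lemma, with the base case being a free action. Your argument is the standard general proof: slice charts $V_x/G_x$ give the orbifold structure; the fibres of $EG\times_G M\to M/G$ are $BG_x$ with $G_x$ finite, hence $\mathbb{R}$-acyclic by the transfer/averaging argument; the local model $EG\times_{G_x}V_x\simeq BG_x$ computes the stalks of $R^q\pi_*\underline{\mathbb{R}}$; and the Leray spectral sequence collapses. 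This is correct in substance and is more general than what the paper proves (any compact $G$, any compact $M$), whereas the paper's Mayer--Vietoris induction is more elementary, avoids sheaf theory entirely, and exploits the explicit classification of non-principal components available only in the 3d $S^1$ setting. Two small points to tighten: (i) your appeal to Vietoris--Begle via ``properness from compactness of $M$'' is off as stated, since neither $EG\times_G M$ nor the fibres $BG_x$ are compact --- the closedness/properness should be arranged on the finite-dimensional approximations $E_nG\times_G M$, which you do mention, or one should just rely on the Leray argument; (ii) you should record that $M/G$ and the local models are paracompact and locally contractible so that sheaf cohomology of $\underline{\mathbb{R}}$ agrees with the singular cohomology appearing in the statement, and that the cofinal system of shrinking slice neighbourhoods justifies the stalk computation. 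Neither is a real gap.
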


We can certainly apply the Theorem of Satake to our special case of $S^1$-actions. However, there is a subtlety in Satake's definition of $H^*(M/S^1,\mathbb{R})$ for the orbifold $M/S^1$ in terms of {\em{orbifold}} differential forms (cf. \cite{Sa56,Du94}). Moreover, because of the use of differential forms, the above theorem is originally stated for $\mathbb{R}$-coefficients not for $\mathbb{Q}$-coefficients.  

In our definition of $H^*(M/S^1, \mathbb{Q})$, we will simply use the ordinary simplicial cohomology for the topological space $M/S^1$ by forgetting its orbifold structure. 

\begin{prop}\label{equivCohom0}
Let $S^1$ act effectively on a compact connected 3d manifold $M$, possibly with boundary. If $M$ does not have fixed points, then $H^*_{S^1}(M,\mathbb{Q})=H^*(M/S^1,\mathbb{Q})$.
\end{prop}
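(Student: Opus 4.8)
The plan is to show that the natural comparison map $\bar{\iota}^*\colon H^*_{S^1}(M,\mathbb{Q})\to H^*(M/S^1,\mathbb{Q})$ induced by the fibre inclusion is an isomorphism, by checking it on an $S^1$-invariant cover and then patching with the equivariant Mayer--Vietoris sequence. The guiding observation is that, since $M$ has no fixed points, every stabilizer is one of $\{1\},\,\mathbb{Z}_m,\,\mathbb{Z}_2$ and hence finite, so the action is locally free; Satake's theorem then already yields the isomorphism in $\mathbb{R}$-coefficients and orbifold cohomology. Rather than reconcile orbifold forms with simplicial cohomology and descend the coefficients, I would work directly with $\bar{\iota}^*$ so as to stay inside ordinary simplicial cohomology with $\mathbb{Q}$-coefficients throughout.

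First I would set up the cover. From the slice theorem and the local analysis of Section 3, $M$ is the union of its free principal part $P$ together with finitely many $S^1$-invariant tubular neighbourhoods of the exceptional orbits and special exceptional tori, modelled equivariantly (up to $S^1$-homotopy equivalence) on $S^1\times_{\mathbb{Z}_m}D$ and $\text{M\"{o}b}\times S^1$; the collars of the torus boundaries are principal and absorbed into $P$. On $P$---and on every pairwise intersection, since intersecting any of these neighbourhoods with $P$ deletes the singular core and leaves a free action---the map $\bar{\iota}^*$ is an isomorphism directly by the free-action fact $H^*_{S^1}(\,\cdot\,)=H^*(\,\cdot\,/S^1)$.

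It then remains to treat the two non-free local models. For the exceptional neighbourhood, homotopy invariance retracts $S^1\times_{\mathbb{Z}_m}D$ onto the orbit $S^1/\mathbb{Z}_m$, giving $H^*_{S^1}=H^*(B\mathbb{Z}_m,\mathbb{Q})=H^*(pt)$, while the orbit space $D/\mathbb{Z}_m$ is a cone on a circle, hence contractible; so both sides equal $H^*(pt)$ and $\bar{\iota}^*$ is the identity in degree $0$. For the special exceptional neighbourhood, the Möbius band retracts $S^1$-equivariantly onto its core $S^1/\mathbb{Z}_2$, and the product formula together with $H^*(B\mathbb{Z}_2,\mathbb{Q})=H^*(pt)$ gives $H^*_{S^1}(\text{M\"{o}b}\times S^1,\mathbb{Q})=H^*(S^1)$, matching $H^*\big([0,1)\times S^1\big)=H^*(S^1)$; a degree-by-degree comparison, using naturality of $\bar{\iota}^*$ under the projection onto the untwisted $S^1$-factor, shows it is an isomorphism here as well. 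With the isomorphism established on every member of the cover and on every intersection, naturality assembles $\bar{\iota}^*$ into a morphism between the equivariant and the orbit-space Mayer--Vietoris sequences, and an induction on the number of pieces together with the five lemma promotes these local isomorphisms to a global one.

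The step I expect to be the main obstacle is the special exceptional model $\text{M\"{o}b}\times S^1$: it is the only stratum on which $\bar{\iota}^*$ is nontrivial in positive degree, so one must verify that the comparison map itself is an isomorphism in degree $1$---not merely that the two cohomology groups happen to agree---which requires tracking $\bar{\iota}^*$ through the equivariant retraction and the product formula. A secondary, more bookkeeping issue is to choose the invariant cover so that all pairwise intersections are genuinely free (hence handled by the free-action fact) and to confirm that $\bar{\iota}^*$ commutes with the Mayer--Vietoris connecting homomorphisms, a naturality statement for the Borel construction that should be made explicit at the cochain level.
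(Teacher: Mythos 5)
Your proposal is essentially the paper's own argument: decompose $M$ into the free principal part and the equivariant tubular neighbourhoods of the non-principal components, verify the comparison map is an isomorphism on each piece (free pieces by the free-action fact, $S^1\times_{\mathbb{Z}_m}D$ and $\text{M\"{o}b}\times S^1$ by the explicit local computations), and patch by induction on the number of non-principal components using the two Mayer--Vietoris sequences and the five lemma. The only omission is the third non-free local model that occurs when $\partial M\neq\varnothing$, namely a special exceptional component $S^1/\mathbb{Z}_2\times I$ with neighbourhood $\text{M\"{o}b}\times I$ and orbit space $I\times I$, which your argument handles identically; your explicit attention to checking that $\bar{\iota}^*$ itself, and not merely the abstract groups, is an isomorphism in degree $1$ on $\text{M\"{o}b}\times S^1$ is a point the paper's proof glosses over.
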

\begin{proof}
We will proceed by induction on the number of non-principal components.

To begin with, suppose $M$ does not have non-principal component. Since we assume there is no fixed point, then $S^1$ acts on $M$ freely and hence $H^*_{S^1}(M)=H^*(M/S^1)$.

Now suppose the proposition is true for any 3d fixed-point-free $S^1$-manifold with $k\geq 0$ non-principal components, and suppose $M$ has $k+1$ non-principal components. Let $C$ be a non-principal component together with an equivariant tubular neighbourhood $N$, then the complement $M'=M\setminus N$ has $k$ non-principal components and $H^*_{S^1}(M')=H^*(M'/S^1)$ according to our assumption. Let's also denote $L=M'\cap N$

The equivariant Mayer-Vietoris sequence for the union $M=M'\cup N$ and the ordinary Mayer-Vietoris sequence for the union $M/S^1=M'/S^1\cup N/S^1$ gives:
\begin{center}
\begin{tikzpicture}[description/.style={fill=white,inner sep=2pt}]
\matrix (m) [matrix of math nodes, row sep=2em,
column sep=1.5em, text height=1.5ex, text depth=0.25ex]
{H^{*-1}_{S^1}(M')\oplus H^{*-1}_{S^1}(N)  & H^{*-1}_{S^1}(L) & H^*_{S^1}(M) & H^{*}_{S^1}(M')\oplus H^{*}_{S^1}(N)  & H^{*}_{S^1}(L)\\
H^{*-1}(M'/S^1)\oplus H^{*-1}(N/S^1)  & H^{*-1}(L/S^1) & H^*(M/S^1) & H^{*}(M'/S^1)\oplus H^{*}(N/S^1)  & H^{*}(L/S^1)\\};
	\path[->]
	(m-1-1) edge (m-1-2)
	(m-1-2) edge (m-1-3)
	(m-1-3) edge (m-1-4)
	(m-1-4) edge (m-1-5)
	(m-2-1) edge (m-2-2)
	(m-2-2) edge (m-2-3)
	(m-2-3) edge (m-2-4)
	(m-2-4) edge (m-2-5)
	(m-1-1) edge (m-2-1)
	(m-1-2) edge (m-2-2)
	(m-1-3) edge (m-2-3)
	(m-1-4) edge (m-2-4)
	(m-1-5) edge (m-2-5);
\end{tikzpicture}
\end{center}
where the second and the fifth vertical maps are isomorphisms, because the intersection $L=M'\cap N$ does not touch non-principal orbits and consists of only principal orbits.

According to the Five Lemma in homological algebra, in order to prove that the middle vertical map is an isomorphism, we now need to prove the first and the fourth maps are isomorphisms. But we already have the isomorphism $H^*_{S^1}(M')=H^*(M'/S^1)$. So we only need to prove $H^*_{S^1}(N)=H^*(N/S^1)$.

In the 3d fixed-point-free $S^1$-manifold $M$, according to our detailed discussion in Subsection \ref{subsec:neighbour}, there are three cases for a non-fixed, non-principal component $C$, its equivariant neighbourhood $N$ and orbit space $N/S^1$. Note that, for each case, there is an equivariant deformation retraction $N \simeq C$, so we have $H^*_{S^1}(N)=H^*_{S^1}(C)$. Also recall that we have calculated $H^*_{S^1}(S^1/\mathbb{Z}_m,\mathbb{Q})=H^*(pt,\mathbb{Q})$.

\begin{center}
	\begin{tabular}{cccc}
		\toprule
		$C$ & $S^1/\mathbb{Z}_m$ & $S^1/\mathbb{Z}_2 \times S^1$ & $S^1/\mathbb{Z}_2 \times I$\\
		\midrule
		$N$ & $S^1 \times_{\mathbb{Z}_m} D^2$ & $\text{M\"{o}b}\times S^1$ & $\text{M\"{o}b}\times I$\\
		\midrule
		$N/S^1$ & $D^2/\mathbb{Z}_m$ & $I \times S^1$ & $I\times I$\\
		\midrule
		$H^*_{S^1}(N)=H^*_{S^1}(C)$ & $H^*(pt)$ & $H^*(S^1)$ & $H^*(I)$\\
		\midrule
		$H^*(N/S^1)$ & $H^*(D^2/\mathbb{Z}_m)$ & $H^*(S^1)$ & $H^*(I)$\\
		\bottomrule
	\end{tabular}
\end{center}

For second and the third case, it is clear that $H^*_{S^1}(N)=H^*(N/S^1)$. For the first case, the orbit space $D^2/\mathbb{Z}_m$, viewed as an ice-cream cone, has a deformation retract to the cone's tip $pt$, so $H^*_{S^1}(N)=H^*(pt)=H^*(D^2/\mathbb{Z}_m)=H^*(N/S^1)$.
\end{proof}

If a 3d $S^1$-manifold $M$ has fixed points, then every connected component of these fixed points is either a circle $S^1$ or an interval $I$. The calculation of $S^1$ equivariant cohomology of a general 3d $S^1$-manifold $M$ will be carried out by doing induction on the number of connected components of these fixed points. The beginning case of no fixed points is just the previous Proposition \ref{equivCohom0}. 

Suppose now that an $S^1$-manifold $M$ has $k>0$ connected components of fixed points. Let's choose any such a connected component $F$, with its equivariant neighbourhood $N$. If $F=S^1$, then $N=D\times S^1$; if $F=I$, then $N=D\times I$. In both cases, $N=D\times F$. If we set the complement $M'=M \setminus N$, then $M$ is attached equivariantly by $M'$ and $N=D\times F$ along $S^1 \times F$. The Mayer-Vietoris sequence of equivariant cohomology groups then gives
\[
\rightarrow H^*_{S^1}(M,\mathbb{Q}) \rightarrow H^*_{S^1}(M',\mathbb{Q}) \oplus H^*_{S^1}(D\times F,\mathbb{Q}) \rightarrow H^*_{S^1}(S^1\times F,\mathbb{Q})\rightarrow H^{*+1}_{S^1}(M,\mathbb{Q})\rightarrow
\]
However, since the $S^1$-action on $D\times F$ and $S^1\times F$ concentrates on their first components respectively, we have:
\begin{center}
\begin{tikzpicture}[description/.style={fill=white,inner sep=2pt}]
 \matrix (m) [matrix of math nodes, row sep=3em,
 column sep=2em, text height=1.5ex, text depth=0.25ex]
 {H^*_{S^1}(D\times F) & H^*_{S^1}(S^1\times F) & &\\
 H^*_{S^1}(D)\otimes H^*(F) & H^*_{S^1}(S^1)\otimes H^*(F)& &\\
 \mathbb{Q}[u]\otimes H^*(F) & H^*(F): & f(u)\otimes \alpha  & f(0)\cdot \alpha \\};
 \path[->]
 (m-1-1) edge (m-1-2)
 (m-2-1) edge (m-2-2)
 (m-3-1) edge (m-3-2);
 \path[-]
 (m-1-1) edge[double,double distance=2pt] (m-2-1)
 (m-1-2) edge[double,double distance=2pt] (m-2-2)
 (m-2-1) edge[double,double distance=2pt] (m-3-1)
 (m-2-2) edge[double,double distance=2pt] (m-3-2);
 \path[|->] (m-3-3) edge (m-3-4);
\end{tikzpicture}
\end{center}
where the upper 2 vertical isomorphisms are because of the cohomology of product spaces, the lower left vertical isomorphism is because of homotopy between $D$ and $pt$, and the lower right vertical isomorphism is because that the $S^1$ is a principal orbit.

The bottom map is obviously surjective, so is the top map $H^*_{S^1}(D\times F) \rightarrow H^*_{S^1}(S^1\times F)$. This means that the long exact sequence actually stops at $H^*_{S^1}(M') \oplus H^*_{S^1}(D\times F) \rightarrow H^*_{S^1}(S^1\times F)\rightarrow 0$. We then conclude that the long exact sequence reduces into the following short exact sequence:
\[
0\rightarrow H^*_{S^1}(M) \rightarrow H^*_{S^1}(M') \oplus 
\Big( \mathbb{Q}[u]\otimes H^*(F) \Big) \rightarrow H^*(F)\rightarrow 0
\]
where we have replaced the $H^*_{S^1}(D\times F)$ and $H^*_{S^1}(S^1\times F)$ by $\mathbb{Q}[u]\otimes H^*(F)$ and $H^*(F)$ respectively.

We can now consider all the $k$ components of fixed points $F_1,\,F_2,\,\ldots,\,F_k$, together with their equivariant tubular neighbourhood $N_1,\,N_2,\,\ldots,\,N_k$. If we set the complement $M_\circ = M \setminus \cup_i N_i$, an $S^1$-manifold without fixed points, then there is a short exact sequence of cohomology groups:
\begin{equation*}\label{eq:ShortSequence0}\tag{$\dagger$}
0\rightarrow H^*_{S^1}(M) \rightarrow H^*_{S^1}(M_\circ) \oplus \oplus_i 
\Big( \mathbb{Q}[u]\otimes H^*(F_i) \Big) \rightarrow  \oplus_i 
H^*(F_i) \rightarrow 0
\end{equation*}

Since $ M_\circ $ is fixed-point-free, $H^*_{S^1}(M_\circ,\mathbb{Q})=H^*(M_\circ/{S^1},\mathbb{Q})$ by Proposition \ref{equivCohom0}. To understand the orbit space $M_\circ/{S^1}$, we can compare it with the orbit space $M/{S^1}$.

\begin{lem}\label{homotopicOrbit}
Following the above notation, the two orbit spaces 
$M_\circ/{S^1}$ and $M/{S^1}$ are topologically homotopic. Especially, $H^*(M_\circ/{S^1},\mathbb{Q}) \cong H^*(M/{S^1},\mathbb{Q})$.
\end{lem}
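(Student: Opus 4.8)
The plan is to prove that the inclusion $M_\circ/S^1 \hookrightarrow M/S^1$ is a (strong) deformation retract, from which the cohomology isomorphism follows immediately by homotopy invariance. First I would record the structure of what is being removed. Each fixed component $F_i$ is either a circle or an interval, with $S^1$-saturated tubular neighbourhood $N_i \cong D\times F_i$, so that $M_\circ = M\setminus\bigcup_i N_i$ and, since the $N_i$ are $S^1$-invariant, the orbit spaces satisfy $M_\circ/S^1 = (M/S^1)\setminus\bigcup_i (N_i/S^1)$. By the local analysis of Subsection \ref{subsec:neighbour}, each $N_i/S^1$ is a product $[0,1)\times F_i^*$, where $F_i^* = F_i/S^1$ is the boundary circle (when $F_i=S^1$) or the boundary edge (when $F_i=I$, cf. Figure \ref{fig:Corner4}) of $M/S^1$ sitting at the radial coordinate $\rho=0$, while the locus $\rho\to 1$ is precisely where $N_i/S^1$ is glued to $M_\circ/S^1$. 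In other words, each $N_i/S^1$ is an open collar of a piece of $\partial(M/S^1)$ coming from the fixed set.

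Then I would write down the retraction explicitly. Define $r_s:M/S^1\to M/S^1$ to be the identity on $M_\circ/S^1$ and, on each collar $N_i/S^1\cong[0,1)\times F_i^*$, to push the radial coordinate outward by $(\rho,x)\mapsto\big((1-s)\rho+s,\,x\big)$. Because the tubular neighbourhoods of distinct fixed components are disjoint, the sets $N_i/S^1$ are pairwise disjoint, so these formulas patch into a single map; and since each collar map fixes the gluing slice $\rho=1$ for every $s$, it agrees there with the identity on $M_\circ/S^1$, making $r_s$ continuous. Then $r_0=\mathrm{id}$ and $r_1$ carries $M/S^1$ onto $M_\circ/S^1$ while fixing $M_\circ/S^1$ throughout, which is the desired deformation retraction.

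The delicate point, and the step I expect to be the main obstacle, is the interval case where $F_i$ meets $\partial M$ and $N_i/S^1\cong[0,1)\times[0,1]$ carries corners: here the two edges $[0,1)\times\{0\}$ and $[0,1)\times\{1\}$ lie in $(\partial M)/S^1$ and abut the sphere, Klein-bottle or projective-plane boundary pieces that survive in $M_\circ/S^1$. I would argue that, because the retraction moves only the radial coordinate $\rho$ and fixes the slice $\rho=1$, it slides these edges onto the corner points of $M_\circ/S^1$ without disturbing the adjacent $\partial M$-boundary, so the local retractions respect the corner structure and remain mutually compatible. Granting this, $M/S^1$ and $M_\circ/S^1$ are homotopy equivalent, and homotopy invariance of ordinary cohomology gives $H^*(M_\circ/S^1,\mathbb{Q})\cong H^*(M/S^1,\mathbb{Q})$.
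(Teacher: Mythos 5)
Your proof is correct and follows essentially the same route as the paper: both identify each removed neighbourhood $N_i/S^1$ as a half-open radial collar $[0,1)\times F_i^*$ attached to $M_\circ/S^1$ along the slice at radius $1$, and conclude that filling the collar back in does not change the homotopy type. You merely make the argument more explicit by writing down the global straight-line retraction $(\rho,x)\mapsto((1-s)\rho+s,x)$ and checking the corner case, where the paper instead compares the two local models $[\tfrac{1}{2},1)\times F$ and $[0,1)\times F$ and notes they are homotopic.
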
 
\begin{proof}
Since the majority of $M_\circ/{S^1}$ and $M/{S^1}$ is isomorphic, we only need to check what happens in an equivariant neighbourhood $N$ near an $S^1$-fixed component $F$ of $M$. 

Let $N'$ be an equivariant neighbourhood slightly larger than $N$. If we choose local $S^1$-equivariant coordinates properly, we can write $N' = D_1\times F$ and $N = D_{\frac{1}{2}}\times F$, where $D_1$ and $D_{\frac{1}{2}}$ are 2-dimensional disks of radii $1$ and $\frac{1}{2}$, such that $S^1$ acts on the disks by standard rotation.

Now $N'\setminus N = (D_1\setminus D_{\frac{1}{2}})\times F$ and $N' = D_1\times F$ are equivariant neighbourhoods of $M_\circ=M \setminus N$ and $M$ respectively. Their orbit spaces by the $S^1$-action give neighbourhoods $(N'\setminus N)/{S^1}$ and $N'/{S^1}$ of $M_\circ/{S^1}$ and $M/{S^1}$ respectively.

However,
\[
(N'\setminus N)/{S^1} = \Big((D_1\setminus D_{\frac{1}{2}})/{S^1}\Big)\times F = [\frac{1}{2},1)\times F
\]
and
\[
N'/{S^1} = \Big(D_1/{S^1}\Big)\times F = [0,1)\times F
\]
are homotopic. Thus $M_\circ /S^1$ and $M/S^1$ are homotopic.
\end{proof}

Finally, we can combine all the above discussions and get: 

\begin{thm}\label{equivCohom1}
Let $M$ be a compact connected 3d effective $S^1$-manifold(possibly with boundary), and $F$ be its fixed-point set(possibly empty), then there is a short exact sequence of cohomology groups in $\mathbb{Q}$ coefficients:

\begin{equation*}\label{eq:ShortSequence1}\tag{$\ddagger$}
0\rightarrow H^*_{S^1}(M) \rightarrow H^*(M/{S^1}) \oplus 
\Big( \mathbb{Q}[u]\otimes H^*(F) \Big) \rightarrow   
H^*(F) \rightarrow 0 
\end{equation*}

\end{thm}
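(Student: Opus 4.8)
The plan is to assemble the theorem directly from the machinery already developed, namely the short exact sequence $(\dagger)$, Proposition \ref{equivCohom0}, and Lemma \ref{homotopicOrbit}. All the genuine topological content has been extracted in establishing $(\dagger)$, so what remains is a chain of identifications that rewrite each term of $(\dagger)$ into the form appearing in the statement $(\ddagger)$. In other words, I expect the proof to be a bookkeeping argument rather than a new construction.

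First I would start from $(\dagger)$, in which the fixed-point set is decomposed as the disjoint union $F=\sqcup_i F_i$ of its connected components and $M_\circ=M\setminus \cup_i N_i$ is the fixed-point-free complement of the tubular neighbourhoods. The middle and right terms of $(\dagger)$ contain the index sums $\oplus_i H^*(F_i)$ and $\oplus_i\big(\mathbb{Q}[u]\otimes H^*(F_i)\big)$. Using that the cohomology of a disjoint union is the direct sum of the cohomologies of its components, I would identify $\oplus_i H^*(F_i)=H^*(F)$; and since $\mathbb{Q}[u]$ is a fixed coefficient ring that distributes over the direct sum, $\oplus_i\big(\mathbb{Q}[u]\otimes H^*(F_i)\big)=\mathbb{Q}[u]\otimes H^*(F)$. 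This collapses both index sums into single terms. Next I would rewrite the remaining summand $H^*_{S^1}(M_\circ)$: because $M_\circ$ is fixed-point-free, Proposition \ref{equivCohom0} gives $H^*_{S^1}(M_\circ,\mathbb{Q})=H^*(M_\circ/S^1,\mathbb{Q})$, and Lemma \ref{homotopicOrbit} then supplies the homotopy equivalence $M_\circ/S^1\simeq M/S^1$, hence the isomorphism $H^*(M_\circ/S^1,\mathbb{Q})\cong H^*(M/S^1,\mathbb{Q})$. Substituting all of these into $(\dagger)$ replaces its terms by those of $(\ddagger)$, and the degenerate case $F=\varnothing$ is covered automatically, since the sequence then reduces to $H^*_{S^1}(M)\cong H^*(M/S^1)$, in agreement with Proposition \ref{equivCohom0}.

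The proof therefore introduces no new argument; the only point I would check with slightly more care is that these isomorphisms are natural enough to be applied termwise inside a single short exact sequence, and that the surjectivity of the connecting map, established in the excerpt for peeling off one fixed component, persists when all $k$ components are removed at once to produce $(\dagger)$. Since the neighbourhoods $N_i$ are disjoint, the single-component computation applies independently to each, so this is not a real obstacle. The genuinely hard step, namely showing that the long exact Mayer--Vietoris sequence truncates to a short exact one, has already been cleared before the theorem is stated, which is precisely why the theorem now follows by substitution.
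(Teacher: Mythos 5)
Your proposal is correct and follows essentially the same route as the paper: the paper's proof likewise takes the sequence $(\dagger)$, substitutes $H^*_{S^1}(M_\circ)=H^*(M_\circ/S^1)=H^*(M/S^1)$ via Proposition \ref{equivCohom0} and Lemma \ref{homotopicOrbit}, collapses the component sums into $H^*(F)$ and $\mathbb{Q}[u]\otimes H^*(F)$, and handles $F=\varnothing$ separately by Proposition \ref{equivCohom0}. Your extra remark about the surjectivity persisting for disjoint neighbourhoods is a valid (if implicit in the paper) sanity check, not a divergence.
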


\begin{proof}
If the fixed-point set $F$ is not empty, then we can use the short exact sequence \ref{eq:ShortSequence0}, and the replacement $H^*_{S^1}(M_\circ)=H^*(M_{\circ}/{S^1})=H^*(M/{S^1})$ because of the Lemma \ref{homotopicOrbit}. If the fixed-point set $F=\varnothing$ is empty, then $H^*(F)=0$. We just use the Proposition \ref{equivCohom0} which says $H^*_{S^1}(M) = H^*(M/{S^1})$. 
\end{proof}

\subsection{The ring and module structure}
By the short exact sequence \ref{eq:ShortSequence1} of Theorem \ref{equivCohom1}, we have the inclusion of cohomology groups: $H^*_{S^1}(M) \hookrightarrow H^*(M/{S^1}) \oplus 
\big( \mathbb{Q}[u]\otimes H^*(F) \big)$. But this inclusion is the direct sum of two restriction maps of cohomology rings, hence preserves ring structure. Therefore, we can describe the ring structure of $H^*_{S^1}(M)$ explicitly in terms of generators and relations from $H^*(M/{S^1})$ and $\mathbb{Q}[u]\otimes H^*(F)$.

For simplicity, we will focus on closed 3d $S^1$-manifolds. If $M$ does not have fixed points, then the Proposition \ref{equivCohom0} says that its equivariant cohomology ring is the cohomology ring of the orbit space. 

Thus we will only be interested in the case where $M$ has non-empty set of fixed points. According to the classification theorem, we can write $M = \big\{b=0;(\epsilon,g,f,s);(m_1,\,n_1),\ldots,(m_r,\,n_r)\big\}$ with $f>0$. Topologically, $M/{S^1}$ is a 2d surface of genus $g$, with $f+s>0$ boundary circles. 

Let's first give a description of the involved cohomologies $H^*(M/{S^1})$ and $\mathbb{Q}[u]\otimes H^*(F_i)$. 

The orbit space $M/{S^1}$ as a topological 2d surface of genus $g$, has $f$ boundary circles $\cup_{i=1}^f F_i$ from fixed components and $s$ boundary circles $\cup_{j=1}^{s} SE_j$ from the orbit spaces of special exceptional components. For a fixed circle $F_i=S^1,1\leq i \leq f$, we write $H^*(F_i,\mathbb{Q})=\mathbb{Q}\delta_i\oplus \mathbb{Q}\theta_i$, where $\delta_i$ and $\theta_i$ are generators of $H^0(F_i,\mathbb{Z})$ and $H^1(F_i,\mathbb{Z})$ respectively. Similarly, for $SE_j=S^1,1\leq j \leq s$, we write $H^*(SE_j,\mathbb{Q})=\mathbb{Q}\delta_{f+j}\oplus \mathbb{Q}\theta_{f+j}$. If the orbit space $M/S^1$ is orientable, i.e. $\epsilon = o$, though $\pm \theta_i$ are both generators for $H^1(F_i,\mathbb{Z})$, we only choose $\theta_i$ compatible with the boundary orientation on $F_i$. The same rule of choice also applies to $\theta_{f+j}$. Moreover, we can write $\mathbb{Q}[u]\otimes H^*(F_i) = \mathbb{Q}[u]\delta_i\oplus \mathbb{Q}[u]\theta_i$ such that every element of $\mathbb{Q}[u]\otimes H^*(F_i)$ can be expressed as $p_i(u)\delta_i+q_i(u)\theta_i$ for polynomials $p_i(u),p_i(u)\in \mathbb{Q}[u]$. 

Using the classic calculation of cohomology of 2d surfaces with boundaries, the cohomology $H^*(M/{S^1})$ has two different descriptions according to whether $M/{S^1}$ is orientable or not.

If $M/{S^1}$ is an orientable surface of genus $g$ with $f+s>0$ boundary circles, then it is homotopic to a wedge of $2g+f+s-1$ circles. Let's denote $\alpha_k,\beta_k,1\leq k \leq g$ for the generators of $H^1(-)$ of the $2g$ circles used in the polygon presentation of the surface $M/{S^1}$. Then we can write $H^*(M/{S^1})$ as a sub-ring of $\mathbb{Q}\delta_0\oplus \oplus_{k=1}^g\big(\mathbb{Q}\alpha_k\oplus\mathbb{Q}\beta_k\big) \oplus \big(\oplus_{i=1}^{f}\mathbb{Q}\theta_i\big)\oplus \big(\oplus_{j=1}^{s}\mathbb{Q}\theta_{f+j}\big)$, such that every element of $H^*(M/{S^1})$ can be expressed as $D\delta_0+\sum_k(A_k\alpha_k+B_k\beta_k)+\sum_i C_i\theta_i+\sum_j C_{f+j}\theta_{f+j}$ for $D,\,A_k,\,B_k,\,C_i,\,C_{f+j}\in \mathbb{Q}$, under the constraint that $\sum_k(A_k+B_k)+\sum_i C_i+\sum_j C_{f+j}=0$.

Moreover, we have the restriction maps to each fixed circle $F_i$:
\[
\mathbb{Q}[u]\otimes H^*(F_i)\rightarrow H^*(F_i): \quad p_i(u)\delta_i+q_i(u)\theta_i \mapsto  p_i(0)\delta_i+q_i(0)\theta_i
\]
and
\[
H^*(M/{S^1})\rightarrow H^*(F_i): \quad D\delta_0+\sum_{k=1}^g (A_k\alpha_k+B_k\beta_k)+\sum_{i=1}^f C_i\theta_i+\sum_{j=1}^s C_{f+j}\theta_{f+j} \mapsto D\delta_i+C_i\theta_i
\]

If $M/{S^1}$ is a non-orientable surface of genus $g$ with $f+s>0$ boundary circles, then it is homotopic to a wedge of $g+f+s-1$ circles. We can denote $\alpha_k,1\leq k \leq g$ for the generators of $H^1(-)$ of the $g$ circles used in the polygon presentation of the surface $M/{S^1}$. The description of the cohomology $H^*(M/{S^1})$ together with the restriction maps is similar to the orientable case, with the only difference that there is no $\beta_k,B_k$ for the non-orientable case.

Following the above notations, we get
\begin{thm}\label{equivCohom2}
	For a closed 3d $S^1$-manifold $M = \big\{b=0;(\epsilon = o,g,f,s);(m_1,\,n_1),\ldots,(m_r,\,n_r)\big\}$ with $f>0$ and an orientable orbit space $M/S^1$, an element of its equivariant cohomology $H^*_{S^1}(M)$ can be written as 
	\[
	\Big(D\delta_0+\sum_{k=1}^g (A_k\alpha_k+B_k\beta_k)+\sum_{i=1}^f C_i\theta_i+\sum_{j=1}^s C_{f+j}\theta_{f+j},\,\sum_{i=1}^f(p_i(u)\delta_i+q_i(u)\theta_i)\Big) \label{element} \tag{$*$}
	\]
	
	in $H^*(M/{S^1}) \oplus \oplus_i 
	\Big( \mathbb{Q}[u]\otimes H^*(F_i) \Big)$, under the relations
	\begin{enumerate}
		\item $\sum_{k=1}^g(A_k+B_k)+\sum_{i=1}^f C_i+\sum_{j=1}^s C_{f+j}=0$
		\item $p_1(0)=p_2(0)=\cdots=p_f(0)=D$
		\item $q_i(0)=C_i$ for each $i$
	\end{enumerate}
	Breaking the equivariant cohomology $H^*_{S^1}(M)$ into different degrees, we have
	\begin{itemize}
		\item $H^0_{S^1}(M)=\mathbb{Q}$
		\item $H^1_{S^1}(M)$ is a subgroup of $H^1(M/{S^1})\oplus\oplus_i H^1(F_i)$ consisting of elements
		\[
		\Big(\sum_{k=1}^g (A_k\alpha_k+B_k\beta_k)+\sum_{i=1}^f C_i\theta_i+\sum_{j=1}^s C_{f+j}\theta_{f+j}, \sum_{i=1}^f C_i\theta_i\Big)
		\]
		under the relation $\sum_{k=1}^g(A_k+B_k)+\sum_{i=1}^f C_i+\sum_{j=1}^s C_{f+j}=0$.
		\item $H^{\geq 2}_{S^1}(M)=\oplus_i\Big(\mathbb{Q}[u]_+\otimes H^*(F_i)\Big)$ where $\mathbb{Q}[u]_+$ consists of polynomials without constant terms.
	\end{itemize}
\end{thm}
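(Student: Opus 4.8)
The plan is to read the entire structure off the short exact sequence $(\ddagger)$ of Theorem~\ref{equivCohom1}, specialized to the present hypotheses. Since $M$ is closed with $f>0$ fixed circles $F_1,\dots,F_f$ and no other fixed components, the fixed-point set is $F=\coprod_{i=1}^f F_i$, and $(\ddagger)$ exhibits $H^*_{S^1}(M)$ as the kernel of the surjection
\[
\rho\colon\ H^*(M/S^1)\oplus\bigoplus_{i=1}^f\bigl(\mathbb{Q}[u]\otimes H^*(F_i)\bigr)\longrightarrow \bigoplus_{i=1}^f H^*(F_i).
\]
As the construction of the sequence shows, $\rho$ is the difference of the two restriction maps written down just above the theorem: the surface restriction $H^*(M/S^1)\to H^*(F_i)$ sending a class to $D\delta_i+C_i\theta_i$, and the evaluation $\mathbb{Q}[u]\otimes H^*(F_i)\to H^*(F_i)$ sending $p_i(u)\delta_i+q_i(u)\theta_i$ to $p_i(0)\delta_i+q_i(0)\theta_i$. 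So the whole task reduces to describing $\ker\rho$ concretely.

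First I would write a general element of the middle term as the pair $(\ast)$ and impose $\rho=0$. Equating the two images inside each $H^*(F_i)=\mathbb{Q}\delta_i\oplus\mathbb{Q}\theta_i$ and separating the $\delta_i$- and $\theta_i$-components yields $D=p_i(0)$ and $C_i=q_i(0)$, which are precisely relations (2) and (3). Relation (1), namely $\sum_k(A_k+B_k)+\sum_i C_i+\sum_j C_{f+j}=0$, is \emph{not} imposed by $\rho$; it is the intrinsic single relation among boundary generators already built into the description of $H^*(M/S^1)$ as the cohomology of a genus-$g$ surface with $f+s>0$ boundary circles. Hence $\ker\rho$ is exactly the set of pairs $(\ast)$ subject to (1)--(3), which is the first assertion. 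That this identification respects products follows, as noted before the theorem, from the fact that the inclusion $H^*_{S^1}(M)\hookrightarrow H^*(M/S^1)\oplus\bigl(\mathbb{Q}[u]\otimes H^*(F)\bigr)$ is a direct sum of restriction ring homomorphisms.

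For the graded splitting I would keep track of degrees: $\delta_0,\delta_i$ lie in degree $0$; $\alpha_k,\beta_k,\theta_i,\theta_{f+j}$ lie in degree $1$; and $u$ has degree $2$. Combined with $H^{\geq 2}(F_i)=0$ (each $F_i$ is a circle) and $H^{\geq 2}(M/S^1)=0$ (the orbit surface is homotopy equivalent to a wedge of circles), this produces the three cases. In degree $0$ only $D\delta_0$ and the $p_i(0)\delta_i$ survive, and (2) collapses them to the single scalar $D$, so $H^0_{S^1}(M)=\mathbb{Q}$. In degree $1$ the first summand contributes all of $H^1(M/S^1)$ constrained by (1), while each $\mathbb{Q}[u]\otimes H^*(F_i)$ contributes only $q_i(0)\theta_i$, which (3) pins to $C_i\theta_i$, giving the stated $H^1_{S^1}(M)$. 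In degree $\geq 2$ the target of $\rho$ vanishes, so $\ker\rho$ is the whole degree-$\geq 2$ part of the middle term; as $H^{\geq 2}(M/S^1)=0$, only $\bigoplus_i\bigl(\mathbb{Q}[u]_+\otimes H^*(F_i)\bigr)$ remains, with $\mathbb{Q}[u]_+=u\,\mathbb{Q}[u]$.

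The argument is essentially formal once $(\ddagger)$ is in hand, so I do not anticipate a genuine obstacle; the care required is bookkeeping rather than conceptual. The main point to watch is matching the two restriction maps to the two summands and verifying that the kernel of their difference is as claimed — in particular keeping relation (1), which is intrinsic to the surface, cleanly separate from relations (2)--(3), which are forced by $\rho$. A secondary check is that the degree-$\geq 2$ collapse genuinely uses both vanishing statements $H^{\geq 2}(M/S^1)=0$ and $H^{\geq 2}(F_i)=0$ simultaneously.
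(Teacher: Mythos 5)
Your proposal is correct and follows essentially the same route as the paper: both identify $H^*_{S^1}(M)$ with the kernel of the restriction map from Theorem~\ref{equivCohom1}, derive relations (2) and (3) by equating the two restrictions inside each $H^*(F_i)$, recognize relation (1) as intrinsic to the description of $H^*(M/S^1)$ for a genus-$g$ surface with boundary, and then obtain the graded decomposition by observing that the relations live only in degrees $\leq 1$. Your added bookkeeping (explicitly separating the $\delta_i$- and $\theta_i$-components and noting the two vanishing statements $H^{\geq 2}(M/S^1)=0$ and $H^{\geq 2}(F_i)=0$) only makes explicit what the paper leaves implicit.
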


\begin{proof}
	The expression $\eqref{element}$ of elements of $H^*_{S^1}(M)$ comes from the description of cohomologies $H^*(M/{S^1})$ and $\mathbb{Q}[u]\otimes H^*(F_i)$. The relations (1)(2)(3) are due to the theorem \ref{equivCohom1} that $H^*_{S^1}(M)$ is the kernel of the restriction map $H^*(M/{S^1}) \oplus \oplus_i 
	\Big( \mathbb{Q}[u]\otimes H^*(F_i) \Big) \rightarrow  \oplus_i 
	H^*(F_i)$. Thus, the images of restrictions are the same: $p_1(0)=p_2(0)=\cdots=p_f(0)=D$, and $q_i(0)=C_i$. Since the relations (1)(2)(3) only live in degree less than 2, we get the description of $H^*_{S^1}(M)$ in different degrees.
\end{proof}

\begin{rmk}\label{rmk:equivCohom3.5}
	For a closed 3d $S^1$-manifold $M = \big\{b=0;(\epsilon = n,g,f,s);(m_1,\,n_1),\ldots,(m_r,\,n_r)\big\}$ with $f>0$ and a non-orientable orbit space $M/S^1$. The
	explicit expression of elements of $H^*_{S^1}(M)$ is almost the same as the oriented case, with the only modification that there is no $\beta_k,B_k$ term.
\end{rmk}

\begin{thm}\label{equivCohom3}
	For a closed 3d $S^1$-manifold $M = \big\{b=0;(\epsilon,g,f,s);(m_1,\,n_1),\ldots,(m_r,\,n_r)\big\}$ with $f>0$, the graded ring structure of $H^*_{S^1}(M)$ is as follows:
	\begin{enumerate}
		\item $H^0_{S^1}(M)\otimes H^*_{S^1}(M) \overset{\cup}{\longrightarrow} H^*_{S^1}(M)$ and $H^*_{S^1}(M) \otimes H^0_{S^1}(M)  \overset{\cup}{\longrightarrow} H^*_{S^1}(M)$ are just scalar multiplication.
		\item $H^1_{S^1}(M)\otimes H^1_{S^1}(M) \overset{\cup}{\longrightarrow} H^2_{S^1}(M)$ is a zero map
		\item $H^1_{S^1}(M)\otimes H^{\geq 2}_{S^1}(M) \overset{\cup}{\longrightarrow} H^{\geq 3}_{S^1}(M)$ fits into a commutative diagram:
		\begin{center}
			\begin{tikzpicture}[description/.style={fill=white,inner sep=2pt}]
			\matrix (m) [matrix of math nodes, row sep=3em,
			column sep=2em, text height=1.5ex, text depth=0.25ex]
			{H^1_{S^1}(M)\otimes H^{\geq 2}_{S^1}(M) & H^{\geq 3}_{S^1}(M)\\
				\Big(\oplus_i H^1(F_i)\Big)\otimes \Big(\oplus_i\big(\mathbb{Q}[u]_+\otimes H^*(F_i)\big)\Big) & \oplus_i\Big(\mathbb{Q}[u]_+\otimes H^*(F_i)\Big)\\};
			\path[->]
			(m-1-1) edge (m-1-2)
			(m-2-1) edge (m-2-2)
			(m-1-1) edge (m-2-1);
			\path[-]
			(m-1-2) edge[double,double distance=2pt] (m-2-2);
			\end{tikzpicture}
		\end{center}
		where the left map is the restriction map ${H^1_{S^1}(M)\rightarrow \oplus_i H^1_{S^1}(F_i) = \oplus_i H^1(F_i)}$ tensored with the identification $H^{\geq 2}_{S^1}(M)=\oplus_i\Big(\mathbb{Q}[u]_+\otimes H^*(F_i)\Big)$, and the bottom map is the component-wise multiplication in $\oplus_i\Big(\mathbb{Q}[u]\otimes H^*(F_i)\Big)$. 
		\item $H^{\geq 2}_{S^1}(M)\otimes H^{\geq 2}_{S^1}(M) \overset{\cup}{\longrightarrow} H^{\geq 2}_{S^1}(M)$ is just the component-wise multiplication of $\oplus_i\Big(\mathbb{Q}[u]_+\otimes H^*(F_i)\Big)$
	\end{enumerate} 
\end{thm}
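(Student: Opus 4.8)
The plan is to exploit the observation, recorded just before Theorem \ref{equivCohom2}, that the injection supplied by Theorem \ref{equivCohom1}
\[
H^*_{S^1}(M) \hookrightarrow H^*(M/S^1) \oplus \big(\mathbb{Q}[u]\otimes H^*(F)\big)
\]
is a \emph{ring} homomorphism, being the direct sum of the two restriction maps $H^*_{S^1}(M)\to H^*(M/S^1)$ and $H^*_{S^1}(M)\to H^*_{S^1}(F)=\mathbb{Q}[u]\otimes H^*(F)$. Consequently every cup product in $H^*_{S^1}(M)$ may be computed by carrying the classes into this ambient ring, multiplying them componentwise (the product ring structure on a direct sum), and noting that the result again lands in the subring $H^*_{S^1}(M)$. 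The entire computation therefore reduces to products in the two factors $H^*(M/S^1)$ and $\oplus_i\big(\mathbb{Q}[u]\otimes H^*(F_i)\big)$, read off against the degreewise description already established in Theorem \ref{equivCohom2}.

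Next I would isolate two structural facts that drive every case. First, since $f>0$ forces $f+s>0$, the orbit space $M/S^1$ is a compact surface with nonempty boundary, hence homotopy equivalent to a wedge of circles; in particular $H^{\geq 2}(M/S^1)=0$. Second, each fixed circle satisfies $H^*(F_i)=\mathbb{Q}\delta_i\oplus\mathbb{Q}\theta_i$ with $\theta_i$ in degree one and $\theta_i^2=0$, while $\mathbb{Q}[u]_+$ is closed under multiplication. Combined with Theorem \ref{equivCohom2}, these facts say that a class in $H^{\geq 2}_{S^1}(M)$ has vanishing $H^*(M/S^1)$-component and lies entirely in $\oplus_i\big(\mathbb{Q}[u]_+\otimes H^*(F_i)\big)$, whereas a class in $H^1_{S^1}(M)$ has second component exactly $\sum_i C_i\theta_i$.

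With these in hand each item is immediate. Item (1) is the formal statement that $H^0_{S^1}(M)=\mathbb{Q}$ acts by scalars. For (2), the $H^*(M/S^1)$-component of a product of two degree-one classes lands in $H^2(M/S^1)=0$, while the second component is the componentwise product of terms $C_i\theta_i\cdot C_i'\theta_i$, each killed by $\theta_i^2=0$; hence the product vanishes. For (4), both factors have zero first component, so the product is the componentwise product in $\oplus_i\big(\mathbb{Q}[u]\otimes H^*(F_i)\big)$, and closure of $\mathbb{Q}[u]_+$ under multiplication together with $\theta_i^2=0$ keeps it inside $\oplus_i\big(\mathbb{Q}[u]_+\otimes H^*(F_i)\big)=H^{\geq 2}_{S^1}(M)$. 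For (3), one factor is a degree-one class and the other has zero first component, so the product's first component vanishes and its second component is $\big(\sum_i C_i\theta_i\big)$ multiplied componentwise with the $H^{\geq 2}$-class; since $\sum_i C_i\theta_i$ is precisely the image of the degree-one class under the restriction $H^1_{S^1}(M)\to\oplus_i H^1(F_i)$, this is exactly the asserted commutative diagram.

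The point requiring care, rather than a genuine obstacle, is keeping the component bookkeeping consistent with the degreewise splitting of Theorem \ref{equivCohom2}; in particular the vanishing of the $H^*(M/S^1)$-component in all degrees $\geq 2$, which is forced precisely because the orbit surface has boundary. Everything else is the componentwise product structure on the direct sum together with the elementary rings $\mathbb{Q}[u]$ and $H^*(S^1)$. The non-orientable case of Remark \ref{rmk:equivCohom3.5} goes through verbatim after dropping the $\beta_k,B_k$ terms, as these never enter the products above.
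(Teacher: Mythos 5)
Your proposal is correct and follows essentially the same route as the paper: embed $H^*_{S^1}(M)$ as a subring of $H^*(M/S^1)\oplus\bigl(\mathbb{Q}[u]\otimes H^*(F)\bigr)$ via the ring monomorphism from Theorem \ref{equivCohom1} and compute all cup products componentwise, using $H^{\geq 2}(M/S^1)=0$ (surface with boundary) and $\theta_i^2=0$ in each $H^*(F_i)$. You are somewhat more explicit than the paper about why the degree-one generators have vanishing products, but the underlying argument is identical.
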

\begin{proof}We will explain the above breakdown one by one for the case when $M/S^1$ is orientable.
	\begin{enumerate}
		\item This is clear.
		\item From the Theorem \ref{equivCohom2}, $H^1_{S^1}(M)$ is generated by the basis $\alpha_j,\,\beta_j,\,\theta_i$, which have zero cup product among them.
		\item Similar to the above remark, the $H^1(M/{S^1})$ component of $H^1_{S^1}(M)\subset H^1(M/{S^1})\oplus\oplus_i H^1(F_i)$ has zero cup-product. So only the cup product involving $\oplus_i H^1(F_i)$ will survive.
		\item Since $H^{\geq 2}_{S^1}(M)=\oplus_i\Big(\mathbb{Q}[u]_+\otimes H^*(F_i)\Big)$, the cup product among $H^{\geq 2}_{S^1}(M)$ is inherited from $\oplus_i\Big(\mathbb{Q}[u]_+\otimes H^*(F_i)\Big)$.
	\end{enumerate}
The argument is exactly the same for the case when $M/S^1$ is non-orientable, because of the Remark \ref{rmk:equivCohom3.5}.
\end{proof}

Using the cup product of Theorem \ref{equivCohom3}, we can now describe the $H^*_{S^1}(pt)$-module structure of $H^*_{S^1}(M)$. 
\begin{thm}\label{equivCohom4}
	Following the notations of Theorem \ref{equivCohom3}, for a closed 3d $S^1$-manifold $M$ with non-empty set of fixed points , the forgetful map ${\pi:M\rightarrow pt}$ induces the map $\pi^*:H^*_{S^1}(pt)=\mathbb{Q}[u]\rightarrow H^*_{S^1}(M)$, with the image of the generator $u$ being $\pi^*(u)=\sum_i u\delta_i$. The generator $u$ acts on $H^*_{S^1}(M)$ by multiplying with $\pi^*(u)=\sum_i u\delta_i$ using the cup product of $H^*_{S^1}(M)$.
\end{thm}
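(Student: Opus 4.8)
The plan is to exploit the fact that, by the very definition of the module structure recalled among the basic facts, the action of $H^*_{S^1}(pt)=\mathbb{Q}[u]$ on $H^*_{S^1}(M)$ is given by $c\cdot x=\pi^*(c)\cup x$. Hence the second assertion of the theorem is immediate once the first is proved, and the whole content reduces to identifying the single class $\pi^*(u)\in H^2_{S^1}(M)$. Since Theorem \ref{equivCohom1} realizes $H^*_{S^1}(M)$ as the kernel of the restriction map
\[
H^*(M/{S^1}) \oplus \Big( \mathbb{Q}[u]\otimes H^*(F) \Big) \longrightarrow H^*(F),
\]
and this injection is the direct sum of two \emph{module} homomorphisms over $\mathbb{Q}[u]$, it suffices to compute the two components of $\pi^*(u)$ separately.

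For the $\mathbb{Q}[u]\otimes H^*(F)$ summand, I would recall from the derivation of \eqref{eq:ShortSequence0} that this factor arises as $H^*_{S^1}(N)$ for the equivariant neighbourhood $N=D\times F$, together with the identification $H^*_{S^1}(D\times F)=\mathbb{Q}[u]\otimes H^*(F)$ coming from the equivariant contraction $D\simeq\{0\}$ and the product formula. Under this identification the structure map for $N$ sends $u$ to $u\otimes 1$; since $F=\sqcup_i F_i$ the unit decomposes as $1=\sum_i\delta_i$, so the $\mathbb{Q}[u]\otimes H^*(F)$ component of $\pi^*(u)$ is exactly $\sum_i u\delta_i$.

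For the $H^*(M/{S^1})$ summand, the relevant restriction factors through the fixed-point-free part as $H^*_{S^1}(M)\to H^*_{S^1}(M_\circ)=H^*(M_\circ/{S^1})\cong H^*(M/{S^1})$ by Proposition \ref{equivCohom0} and Lemma \ref{homotopicOrbit}. Being a module map, it carries $\pi^*(u)$ to the degree-$2$ structure class $\pi_\circ^*(u)\in H^2(M_\circ/{S^1})$ of $M_\circ$. But $M/{S^1}$, and hence $M_\circ/{S^1}$, is a compact surface with non-empty boundary (here the hypothesis $f>0$ enters), homotopy equivalent to a wedge of circles, so $H^2(M/{S^1})=0$ and this component vanishes. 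Combining the two computations gives the image of $\pi^*(u)$ as $\big(0,\sum_i u\delta_i\big)$; I would then check that this pair satisfies the compatibility relations of Theorem \ref{equivCohom2} (it does, with $D=0$, all $p_i(u)=u$, and $q_i=0$), so it genuinely lies in $H^*_{S^1}(M)$, and under the injection we write it simply as $\sum_i u\delta_i$.

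I expect the only real subtlety to be the vanishing in the $H^*(M/{S^1})$ component: one must make sure that the restriction of $u$ to the locally free locus, which conceptually is the rational orbifold Euler class of the Seifert fibration $M_\circ\to M_\circ/{S^1}$, dies purely for dimensional reasons rather than requiring any explicit bundle computation. Once $\pi^*(u)=\sum_i u\delta_i$ is established, the asserted module action $u\cdot x=\big(\sum_i u\delta_i\big)\cup x$ is just the definition of the structure map combined with the cup product already described in Theorem \ref{equivCohom3}.
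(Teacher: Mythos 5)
Your proposal is correct and takes essentially the same route as the paper: both reduce the statement to naturality of the structure map $\pi^*$ under restriction, identifying the component of $\pi^*(u)$ on each fixed component as $u\delta_i$ (the paper via the triangle $F_i\hookrightarrow M\to pt$, you via the equivariantly equivalent neighbourhood $D\times F$). The only cosmetic difference is that the paper disposes of the $H^*(M/{S^1})$-component implicitly through the degree decomposition $H^{\geq 2}_{S^1}(M)=\oplus_i\bigl(\mathbb{Q}[u]_+\otimes H^*(F_i)\bigr)$ of Theorem \ref{equivCohom2}, whereas you make the same vanishing explicit via $H^2(M/{S^1})=0$ for a surface with non-empty boundary.
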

\begin{proof}
	$u\in \mathbb{Q}[u]$ is of degree 2, so is $\pi^*(u)\in H^{\geq 2}_{S^1}(M)=\oplus_i\Big(\mathbb{Q}[u]_+\otimes H^*(F_i)\Big)$. Hence we only need to know the restriction of $\pi^*(u)$ from $H^*_{S^1}(M)$ to $H^*_{S^1}(F_i)$ for each fixed circle $F_i$.
	The commutative diagram of forgetful maps
	\begin{center}
		\begin{tikzpicture}[description/.style={fill=white,inner sep=2pt}]
		\matrix (m) [matrix of math nodes, row sep=2em,
		column sep=2em, text height=1.5ex, text depth=0.25ex]
		{F_i & M\\
			& pt\\};
		\path[right hook->] (m-1-1) edge (m-1-2);
		\path[->]
		(m-1-1) edge node[below] {$ \pi_i $}(m-2-2)
		(m-1-2) edge node[auto] {$ \pi $}(m-2-2);
		\end{tikzpicture}
	\end{center}
	induces the commutative diagram of maps between equivariant cohomologies
	\begin{center}
		\begin{tikzpicture}[description/.style={fill=white,inner sep=2pt}]
		\matrix (m) [matrix of math nodes, row sep=3em,
		column sep=2em, text height=1.5ex, text depth=0.25ex]
		{H^*_{S^1}(F_i) & H^*_{S^1}(M)\\
			& H^*_{S^1}(pt)\\};
		\path[<-]
		(m-1-1) edge (m-1-2)
		(m-1-1) edge node[below] {$ \pi_i^* $}(m-2-2)
		(m-1-2) edge node[auto] {$ \pi^* $}(m-2-2);
		\end{tikzpicture}
	\end{center}
	Thus the restriction of $\pi^*(u)$ from $H^*_{S^1}(M)$ to $H^*_{S^1}(F_i)$ is the image $\pi^*_i(u)$ via the map $\pi^*_i:H^*_{S^1}(pt)=\mathbb{Q}[u]\rightarrow H^*_{S^1}(F_i)=\mathbb{Q}[u]\delta_i\oplus \mathbb{Q}[u]\theta_i$. Since $F_i$ is a fixed component of the $S^1$-action on $M$, $u\in \mathbb{Q}[u]$ acts trivially on $H^*_{S^1}(F_i)$ with $\pi^*_i(u)=u\delta_i$.
	
	In conclusion, if we combine the contribution from all the fixed components $F_i$, we get $\pi^*(u)=\sum_i u\delta_i$.
\end{proof}

If a closed 3d $S^1$-manifold $M$ does not have fixed point, then the image $\pi^*(u)$ is in $H^2_{S^1}(M)=H^2(M/S^1)$ by the Proposition \ref{equivCohom0}. In this case, a condition for $\pi^*(u)=0$ is to make sure that $H^2(M/S^1)=0$.

\begin{prop}
For a closed 3d fixed-point-free $S^1$-manifold $M = \big\{b;(\epsilon,g,f=0,s);(m_1,\,n_1),\ldots,(m_r,\,n_r)\big\}$, if $\epsilon=n$ or $s>0$, then $H^2_{S^1}(M)=H^2(M/S^1)=0$, hence $\pi^*(u)=0$.
\end{prop}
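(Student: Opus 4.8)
The plan is to reduce the equivariant assertion to a completely elementary computation of the ordinary rational cohomology of a surface, and then read off the statement about $\pi^*(u)$ from a degree count. Since $M$ is assumed fixed-point-free, Proposition \ref{equivCohom0} already supplies the identification $H^*_{S^1}(M,\mathbb{Q})=H^*(M/S^1,\mathbb{Q})$; in particular $H^2_{S^1}(M)=H^2(M/S^1)$. So the whole proposition comes down to showing that the underlying topological space $M/S^1$ has vanishing second rational cohomology whenever $\epsilon=n$ or $s>0$.

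First I would pin down the homeomorphism type of $M/S^1$ as a plain topological space. By the orbit-space description of Section 3, $M/S^1$ is a $2$-orbifold of genus $g$ and orientability $\epsilon$ whose boundary circles come from $F\cup SE/S^1$; here the hypothesis $f=0$ forces the boundary to consist of exactly the $s$ circles $SE_1/S^1,\dots,SE_s/S^1$. Since $M$ is closed there is no $\partial M$ and hence no corners. Crucially, each interior orbifold point has a neighbourhood $D^2/\mathbb{Z}_m$, which is topologically a disk (the rotation quotient is the cone over a circle, homeomorphic to $D^2$). Thus, after forgetting the orbifold structure, $M/S^1$ is simply a compact surface of orientability $\epsilon$, genus $g$, with $s$ boundary circles, to which classical surface cohomology applies.

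Next I would split into the two cases dictated by the hypothesis. If $s>0$, the surface has nonempty boundary, hence deformation retracts onto a $1$-dimensional spine (a finite wedge of circles), so $H^2(M/S^1,\mathbb{Q})=0$ irrespective of orientability. If instead $s=0$, then the surface is closed, and the remaining hypothesis $\epsilon=n$ makes it a closed non-orientable surface, whose top rational cohomology vanishes, giving again $H^2(M/S^1,\mathbb{Q})=0$. Either way $H^2_{S^1}(M)=H^2(M/S^1)=0$. Finally, since the generator $u$ of $H^*_{S^1}(pt)=\mathbb{Q}[u]$ has degree $2$, its image $\pi^*(u)$ lies in $H^2_{S^1}(M)$, which we have just shown is trivial, so $\pi^*(u)=0$.

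I do not anticipate a genuine obstacle. The only point that deserves care is the second step, namely the passage to the underlying topological space: one must be explicit that forgetting the orbifold structure collapses each $D^2/\mathbb{Z}_m$ to an honest disk, so that $M/S^1$ is a bona fide compact surface and the classical dichotomy (surface-with-boundary versus closed non-orientable surface) governs the vanishing of $H^2$. Once that identification is granted, both cases are immediate and the conclusion $\pi^*(u)=0$ is purely formal.
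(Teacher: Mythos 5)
Your proposal is correct and follows essentially the same route as the paper: reduce via Proposition \ref{equivCohom0} to the ordinary cohomology of the orbit surface, observe that $\epsilon=n$ or $s>0$ forces $H^2(M/S^1,\mathbb{Q})=0$ by the classical computation for non-orientable or bounded surfaces, and conclude $\pi^*(u)=0$ by the degree count. Your extra care about the orbifold points being topological disks is a worthwhile clarification but does not change the argument.
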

\begin{proof}
By the classic calculation of cohomology of surfaces. A sufficient condition for $H^2(M/S^1)=0$ is that $M/S^1$ is non-orientable or has non-empty boundary, which corresponds to the condition: $\epsilon=n$ or $s>0$.
\end{proof}

If $\epsilon=o$ and $s=0$, then this is exactly the case of oriented Seifert manifold. The image $\pi^*(u)\in H^2_{S^1}(M)=H^2(M/S^1)$ is calculated by Niederkr\"{u}ger in his thesis (cf. \cite{Ni05} Theorem III.13).

\begin{thm}[Niederkr\"{u}ger, \cite{Ni05}]
Given an oriented Seifert manifold $M = \big\{b;(\epsilon=o,g,f=0,s=0);(m_1,\,n_1),\ldots,(m_r,\,n_r)\big\}$, let $l_i$ be the unique solution of $l_i n_i\equiv 1 \mod{m_i},\, 0<l_i<m_i$ for each coprime pair $(m_i,\,n_i)$. Then
\[
\pi^*(u)=b+\sum_{i=1}^r \frac{l_i}{m_i}\in H^2(M/S^1)=\mathbb{Q}
\]
\end{thm}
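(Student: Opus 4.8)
The plan is to recognize $\pi^*(u)$ as the rational Euler class of the Seifert fibration $\pi\colon M\to M/S^1$ and compute it by an obstruction/Mayer--Vietoris argument localized at the exceptional orbits. I would first record the guiding principle: for a genuine principal $S^1$-bundle $P\to X$ the Borel construction $ES^1\times_{S^1}P$ deformation retracts onto $X$ and the map to $BS^1$ becomes the classifying map, so $\pi^*(u)=e(P)\in H^2(X)$ is the ordinary Euler class. Since $M=\{b;(\epsilon=o,g,0,0);(m_1,n_1),\dots,(m_r,n_r)\}$ has neither fixed points nor special exceptional orbits, the action is locally free; by Proposition \ref{equivCohom0} we have $H^2_{S^1}(M)=H^2(M/S^1)$, and topologically $M/S^1$ is a closed orientable surface of genus $g$, so this group is $\mathbb{Q}$. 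The theorem then asserts that the scalar representing $\pi^*(u)$ against the fundamental class $[M/S^1]$ equals $b+\sum_i l_i/m_i$.

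Next I would decompose the base $B=M/S^1$ as $B=B_0\cup\bigcup_i D_i$, where $D_i\cong D^2/\mathbb{Z}_{m_i}$ are small cone-disk neighbourhoods of the $r$ orbifold points and $B_0$ is the genus-$g$ surface with $r$ boundary circles $\partial_i$, the overlaps being annular collars of the $\partial_i$. Over $B_0$ the restriction $M_0=\pi^{-1}(B_0)\to B_0$ is an honest principal $S^1$-bundle, while over $D_i$ the total space is the exceptional fibered solid torus $V_i=S^1\times_{\mathbb{Z}_{m_i}}D^2$. I would run the ordinary Mayer--Vietoris sequence on $B$ for this cover; since $H^2(B_0)=0$ (boundary present) and $H^2(D_i)=0$ (contractible), it collapses to
\[
\bigoplus_i H^1(\partial_i)\xrightarrow{\ \delta\ } H^2(B)\to 0,
\]
with $\delta$ the sum-of-coefficients map onto $H^2(B)=\mathbb{Q}$. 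Because the restrictions $e|_{B_0}=e(M_0\to B_0)\in H^2(B_0)=0$ and $e|_{D_i}\in H^2(D_i)=0$ both vanish, the Euler class $e=\pi^*(u)$ lies in the image of $\delta$, and the problem reduces to identifying the element of $\bigoplus_i H^1(\partial_i)$ whose image under $\delta$ is $\pi^*(u)$.

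That element is computed from how two natural trivializations of the circle bundle over the overlap torus $T_i=\pi^{-1}(\partial_i)$ disagree. I would fix a section $\sigma_0$ trivializing $M_0\to B_0$ (possible since $B_0$ has the homotopy type of a wedge of circles), so that $q_i=\sigma_0|_{\partial_i}$ is a section curve on $T_i$, and on the $V_i$ side record the meridian $\mu_i\subset T_i$ bounding the meridian disk of $V_i$ together with the regular fiber $h$. Writing $H_1(T_i)$ in terms of the $\mathbb{Z}_{m_i}$-quotient lattice attached to $(m_i,n_i)$, one finds $\mu_i=m_i\,q_i-l_i\,h$, where $l_i$ is forced to be the arithmetic inverse $l_in_i\equiv1\pmod{m_i}$. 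The local contribution of $D_i$ to $\delta^{-1}(\pi^*(u))$ is then $l_i/m_i$, while the failure of the curves $q_i$ to assemble into a single global section over all of $B$ contributes the integer $b$. Summing the fractional local terms and the global integer and applying $\delta$ yields $\pi^*(u)=b+\sum_i l_i/m_i$.

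The hard part will be exactly this last computation: tracking the three curves $q_i,h,\mu_i$ through the $\mathbb{Z}_{m_i}$-quotient lattice and seeing that it is the inverse $l_i=n_i^{-1}\bmod m_i$, rather than $n_i$ itself, that appears as the fiber coefficient of $\mu_i$. This inversion reflects the difference between how $\mathbb{Z}_{m_i}$ rotates the normal disk (governed by $n_i$) and how the meridian is glued to the section--fiber framing (governed by $n_i^{-1}$), and it is also the step where all orientation and sign conventions must be pinned down consistently so that the result is $+b+\sum_i l_i/m_i$ and not a sign variant. Once those conventions are fixed --- equivalently, once this normalization is matched with the standard rational Euler number of an oriented Seifert fibration --- the remaining steps are routine.
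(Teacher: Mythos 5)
The paper does not actually prove this statement: it is quoted verbatim from Niederkr\"{u}ger's thesis (Theorem III.13 of \cite{Ni05}), where the computation is carried out with a connection $1$-form whose curvature represents $\pi^*(u)$ on the orbifold base. Your obstruction-theoretic route --- identifying $\pi^*(u)$ with the rational Euler class via Proposition \ref{equivCohom0}, decomposing $B=M/S^1$ into $B_0$ and the cone disks $D_i$, and reading off the class from the mismatch of trivializations over the tori $T_i$ --- is the standard topological alternative, and it is correct in outline. The one step you defer, the relation $\mu_i=m_i q_i-l_i h$, does come out as you assert: in $H_1(T_i)$ take the basis $e_1=\frac{1}{m_i}(1,n_i)$, $e_2=(0,1)$ coming from the $\mathbb{Z}_{m_i}$-quotient lattice, so that the regular fiber is $h=m_ie_1-n_ie_2$ and the meridian is $\mu_i=e_2$; any section class has the form $q_i=l_ie_1+\frac{1-l_in_i}{m_i}e_2$ with $l_in_i\equiv 1\pmod{m_i}$ (this is exactly the condition $q_i\cdot h=\pm1$), and inverting the unimodular change of basis gives $\mu_i=m_iq_i-l_ih$. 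So the inverse $l_i=n_i^{-1}\bmod m_i$ is forced, as you predicted. The only place where your write-up should be tightened is the sentence attributing the integer part to ``the failure of the $q_i$ to assemble into a global section'': in the Orlik--Raymond normalization $0<n_i<m_i$ this is essentially the \emph{definition} of $b$ (the obstruction class of the principal bundle $M_0\to B_0$ relative to the chosen boundary sections), so it should be invoked as such rather than presented as something to be computed; once that and the orientation conventions are fixed, the sum $b+\sum_i l_i/m_i$ falls out of the Mayer--Vietoris boundary map exactly as you describe.
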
 

\begin{rmk}
The rational number $b+\sum_{i=1}^r l_i/m_i$ is exactly the orbifold Euler characteristic of the oriented Seifert manifold, with integer $b$ contributed by the principal orbits and fraction $\sum_{i=1}^r l_i/m_i$ contributed by the exceptional orbits.
\end{rmk}

\subsection{The vector-space structure}
Since we are working in $\mathbb{Q}$-coefficient, the group structure of the equivariant cohomology $H^*_{S^1}(M)$ is simply the $\mathbb{Q}$-vector-space structure. In the short exact sequence \ref{eq:ShortSequence1}, we note that the surjective map $\mathbb{Q}[u]\otimes H^*(F) \longrightarrow H^*(F)$ by sending a polynomial $f(u)\in \mathbb{Q}[u]$ to its constant term $f(0)$, has a kernel $\mathbb{Q}[u]_+\otimes H^*(F)$, where $\mathbb{Q}[u]_+$ consists of polynomials without constant terms.

\begin{prop}\label{equivCohom5}
	Let $M$ be a compact connected 3d effective $S^1$-manifold(possibly with boundary), and $F$ be its fixed-point set(possibly empty), we get
	\[
	H^*_{S^1}(M) \cong H^*(M/{S^1})\oplus \Big(\mathbb{Q}[u]_+\otimes H^*(F)\Big) \quad \mbox{as vector spaces}
	\]
	where $\mathbb{Q}[u]_+$ consists of polynomials without constant terms.
\end{prop}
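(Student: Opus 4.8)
The plan is to read the isomorphism off directly from the short exact sequence $(\ddagger)$ of Theorem \ref{equivCohom1}, exploiting the fact that over the field $\mathbb{Q}$ every object in sight is a graded $\mathbb{Q}$-vector space. Write $A = H^*(M/S^1)$, $B = \mathbb{Q}[u]\otimes H^*(F)$ and $C = H^*(F)$, so that $(\ddagger)$ reads $0\to H^*_{S^1}(M)\to A\oplus B\xrightarrow{\rho} C\to 0$, and hence $H^*_{S^1}(M)\cong\ker\rho$. The crucial structural observation, already visible in the computation that produced $(\ddagger)$, is that $\rho$ is assembled from the two restriction maps to $H^*(F)$, and that its restriction $\rho|_B\colon \mathbb{Q}[u]\otimes H^*(F)\to H^*(F)$ is the evaluation $f(u)\otimes\alpha\mapsto f(0)\,\alpha$, which is \emph{already surjective} with kernel exactly $B_+ := \mathbb{Q}[u]_+\otimes H^*(F)$.

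First I would make the shape of $\rho$ explicit: on a pair $(x,b)$ it is the difference $x|_F - b|_F$ of the restriction of $x\in H^*(M/S^1)$ to the fixed set and the evaluation of $b$ at $u=0$. Then I would analyse $\ker\rho$ through the projection $\pi_A\colon A\oplus B\to A$ onto the first factor. I expect to verify two claims: (i) $\pi_A$ carries $\ker\rho$ \emph{onto} $A$, since any $x\in A$ lifts to the element $(x,\widehat{x|_F})\in\ker\rho$, where $\widehat{x|_F}\in B$ denotes the constant-polynomial lift of the class $x|_F\in H^*(F)$ (this lies in $\ker\rho$ because a constant polynomial evaluates to itself at $u=0$); and (ii) the kernel of $\pi_A|_{\ker\rho}$ consists of the pairs $(0,b)$ with $b|_F=0$, that is, precisely $0\oplus B_+$.

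Combining (i) and (ii) produces a short exact sequence $0\to B_+\to H^*_{S^1}(M)\to A\to 0$, and the explicit assignment $x\mapsto(x,\widehat{x|_F})$ furnishes a linear section of $\pi_A$, so the sequence splits and yields $H^*_{S^1}(M)\cong A\oplus B_+ = H^*(M/S^1)\oplus\big(\mathbb{Q}[u]_+\otimes H^*(F)\big)$, as claimed. Note that this explicit section sidesteps any dimension count. The empty-fixed-set case is immediate: then $H^*(F)=0$, both summands on the right collapse, and the statement reduces to $H^*_{S^1}(M)=H^*(M/S^1)$ from Proposition \ref{equivCohom0}.

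The proof carries no serious geometric or analytic obstacle; the only points needing care are bookkeeping. The main thing to get right is the precise identification of $\rho$ together with the surjectivity of $\rho|_B$, since the whole argument rests on absorbing the entire image of $\rho$ into the $B$-factor. I would also stress that the resulting isomorphism is one of \emph{graded vector spaces only}: the section $x\mapsto(x,\widehat{x|_F})$ is not multiplicative, so this splitting must not be conflated with the ring and module descriptions of Theorems \ref{equivCohom3} and \ref{equivCohom4}, and it is precisely the use of a field of coefficients that makes the splitting automatic.
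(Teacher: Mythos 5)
Your proposal is correct and follows essentially the same route as the paper: both identify $H^*_{S^1}(M)$ with the kernel of the map in the short exact sequence of Theorem \ref{equivCohom1} and split off $\mathbb{Q}[u]_+\otimes H^*(F)$ using the fact that the evaluation $\mathbb{Q}[u]\otimes H^*(F)\to H^*(F)$ is surjective with that kernel. The only difference is that you make explicit the linear section $x\mapsto(x,\widehat{x|_F})$, which the paper leaves implicit.
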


\begin{proof}
	From Theorem \ref{equivCohom1}, we see that $H^*_{S^1}(M)$ is the kernel of the map $H^*(M/{S^1}) \oplus 
	\Big( \mathbb{Q}[u]\otimes H^*(F) \Big) \rightarrow H^*(F)$. But the map $\mathbb{Q}[u]\otimes H^*(F) \rightarrow H^*(F)$ has the kernel $\mathbb{Q}[u]_+\otimes H^*(F)$. So we can write $H^*_{S^1}(M)$ as a direct sum of $H^*(M/{S^1})$ and $\mathbb{Q}[u]_+\otimes H^*(F)$.
\end{proof}

\begin{rmk}
The above expression of $H^*_{S^1}(M)$ as a direct sum usually does not preserve the ring structure, unless $F=\varnothing$, i.e. $M$ is fixed-point-free.
\end{rmk}

\begin{rmk}
	If the fixed-point set $M^{S^1}=F=\cup_i F_i$ is non-empty, then the orbit space $M/S^1$ has boundaries, so $H^{*\geq 2}(M/S^1)=0$. Also note $\mathbb{Q}[u]_+\otimes H^*(F_i)$ has degrees at least 2. So the above theorem says that when $M^{S^1}\not = \varnothing$, we have
	\begin{itemize}
		\item[(1)]
		$H^{*\leq 1}_{S^1}(M) = H^*(M/{S^1})$ is determined by the orbit space and $H^{*\geq 2}_{S^1}(M)= \oplus_i \Big(\mathbb{Q}[u]_+\otimes H^*(F_i)\Big)$ is determined by the fixed-point set. 
		\item[(2)]
		Since $H^*(S^1)$ contributes to both even and odd degrees, but $H^*(I)$ only contributes to even degrees. We have $\#\{F_i=S^1\}=\text{dim}\,H^3_{S^1}(M)$ and $\#\{F_i=I\}=\text{dim}\,H^2_{S^1}(M)-\text{dim}\,H^3_{S^1}(M)$.
	\end{itemize}
\end{rmk}

\subsection{Equivariant Betti numbers and Poincar\'e series}
Given an $S^1$-manifold $M$, we can calculate its equivariant Betti numbers $b^k_{S^1}=\mathrm{dim}\, H^k_{S^1}(M)$ and the equivariant Poincar\'e series $P^M_{S^1}(x) = \sum_{k=0}^\infty b^k_{S^1} x^k$ .

When a closed 3d $S^1$-manifold $M$ has neither fixed points nor special exceptional orbits, i.e. $f=s=0$, also called Seifert manifold, its orbit space $M/{S^1}$ is a closed 2d orbifold of genus $g$. By Proposition \ref{equivCohom0}, $H^*_{S^1}(M,\mathbb{Q})=H^*(M/{S^1},\mathbb{Q})$ and the classic calculation of cohomology of closed surfaces, we have

\begin{prop}\label{thm:bettiPoinc0}
For a closed 3d $S^1$-manifold $M$ without fixed points nor special exceptional orbits, i.e. $M = \big\{b;(\epsilon,g,f=0,s=0);(m_1,\,n_1),\ldots,(m_r,\,n_r)\big\}$, the equivariant Poincar\'e series are $1+2gx+x^2$ if $M$ is orientable, or $1+gx$ if $M$ is non-orientable.
\end{prop}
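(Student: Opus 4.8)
The plan is to reduce the equivariant computation to an ordinary cohomology computation and then quote the classical cohomology of closed surfaces. Since $f=s=0$, the manifold $M$ has no fixed points, so Proposition~\ref{equivCohom0} yields $H^*_{S^1}(M,\mathbb{Q})\cong H^*(M/S^1,\mathbb{Q})$. Hence the equivariant Poincar\'e series of $M$ is exactly the ordinary Poincar\'e series of the orbit space $M/S^1$, and the entire problem collapses to computing $H^*(M/S^1,\mathbb{Q})$.

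Next I would pin down the topological type of $M/S^1$. With $f=s=0$ the orbit space is a closed $2$d orbifold of genus $g$ and orientability $\epsilon$, whose only orbifold points are the isolated interior points $E_i/S^1$ coming from the exceptional orbits. The key observation is that each such point has an orbit-space neighbourhood $D/\mathbb{Z}_m$ which is \emph{topologically} a disk: the rotation quotient of $D$ is the cone on $S^1/\mathbb{Z}_m\cong S^1$, hence homeomorphic to $D$. Since the paper's convention computes $H^*(M/S^1,\mathbb{Q})$ with ordinary simplicial cohomology, forgetting the orbifold structure, the underlying space of $M/S^1$ is therefore an honest closed surface of genus $g$, orientable when $\epsilon=o$ and non-orientable when $\epsilon=n$.

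Finally I would invoke the standard cohomology of closed surfaces. In the orientable case the Betti numbers are $b_0=1$, $b_1=2g$, $b_2=1$, giving the Poincar\'e series $1+2gx+x^2$; in the non-orientable case one has $b_2=0$ and first Betti number $g$ in the genus convention fixed earlier in the paper, giving $1+gx$. The only delicate point in this argument---and it is a mild one---is the middle step: one must verify that the orbifold structure contributes nothing to the rational cohomology, which is precisely the homeomorphism $D/\mathbb{Z}_m\cong D$. Once that is recorded, the first and last steps are immediate applications of Proposition~\ref{equivCohom0} and the classical surface computation respectively.
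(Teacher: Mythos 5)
Your argument is exactly the paper's: reduce to the ordinary cohomology of the orbit space via Proposition~\ref{equivCohom0} (possible since $f=s=0$ means there are no fixed points), identify $M/S^1$ as a closed surface of genus $g$, and quote the classical surface computation. The only thing you add is the explicit observation that $D/\mathbb{Z}_m$ is topologically a disk, so the orbifold points contribute nothing to the simplicial cohomology of the underlying space --- a detail the paper leaves implicit but which is worth recording.
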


When the set of fixed points or special exceptional orbits is non-empty, we will get:

\begin{thm}\label{bettiPoinc}
For a closed 3d $S^1$-manifold $M = \big\{b;(\epsilon,g,f,s);(m_1,\,n_1),\ldots,(m_r,\,n_r)\big\}$ with $f+s>0$(hence $b=0$), its equivariant Betti numbers are
\begin{align*}
b^0_{S^1} &= 1\\
b^1_{S^1} &= 
\begin{cases}
\hfill 2g+f+s-1 \hfill &\text{if $\epsilon=o$}\\
\hfill g+f+s-1 \hfill &\text{if $\epsilon=n$} \\
\end{cases} \\
b^{2k}_{S^1} &= f \quad \mbox{for $k\geq 1$}\\
b^{2k+1}_{S^1} &= f \quad \mbox{for $k\geq 1$}
\end{align*}
with the equivariant Poincar\'e series
\[
P^M_{S^1}(x) = \sum_{k=0}^\infty b^k_{S^1} x^k = 
\begin{cases}
\hfill 1+(2g+f+s-1)x+f\cdot\frac{x^2+x^3}{1-x^2} \hfill &\text{if $\epsilon=o$}\\
\hfill 1+(g+f+s-1)x+f\cdot\frac{x^2+x^3}{1-x^2}\hfill &\text{if $\epsilon=n$}\\
\end{cases}
\]
\end{thm}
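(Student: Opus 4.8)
The plan is to deduce everything from the vector-space splitting already established in Proposition~\ref{equivCohom5}, which identifies
\[
H^*_{S^1}(M) \cong H^*(M/S^1) \oplus \big(\mathbb{Q}[u]_+ \otimes H^*(F)\big)
\]
as graded $\mathbb{Q}$-vector spaces. Since equivariant Betti numbers and the Poincar\'e series are additive over direct sums of graded vector spaces, it suffices to compute the dimension of each summand in every degree and then add them. The entire argument thus reduces to two cohomology computations together with some degree bookkeeping.

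First I would pin down the orbit space. Because $f+s>0$, the surface $M/S^1$ has nonempty boundary: it is a compact genus-$g$ surface with $f+s$ boundary circles, arising from the $f$ fixed circles and the $s$ special exceptional circles. As recalled in the discussion preceding Theorem~\ref{equivCohom2}, such a surface is homotopy equivalent to a wedge of $2g+f+s-1$ circles when $\epsilon=o$ and of $g+f+s-1$ circles when $\epsilon=n$. Consequently $H^0(M/S^1)=\mathbb{Q}$, the dimension of $H^1(M/S^1)$ is $2g+f+s-1$ (resp.\ $g+f+s-1$), and $H^{\geq 2}(M/S^1)=0$. This summand therefore produces $b^0_{S^1}=1$ and the whole of $b^1_{S^1}$, while contributing nothing in degrees $\geq 2$.

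Next I would compute the fixed-point contribution. Since $M$ is closed, every component of $F$ is a fixed circle, so $F=\coprod_{i=1}^f S^1$ and $H^*(F)$ has dimension $f$ in each of degrees $0$ and $1$ and vanishes otherwise. The graded ring $\mathbb{Q}[u]_+$ is one-dimensional, spanned by $u^k$, in each even degree $2k$ with $k\geq 1$, and is zero in all other degrees. Tensoring degree by degree, the only surviving contribution in degree $2k$ ($k\geq 1$) is $u^k\otimes H^0(F)$, of dimension $f$, and in degree $2k+1$ ($k\geq 1$) it is $u^k\otimes H^1(F)$, again of dimension $f$; in degrees $0$ and $1$ this summand vanishes. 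Adding the two summands degree by degree yields exactly $b^0_{S^1}=1$, the stated value of $b^1_{S^1}$, and $b^{2k}_{S^1}=b^{2k+1}_{S^1}=f$ for $k\geq 1$.

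Finally, assembling the Poincar\'e series is just summing the geometric tails: the fixed-point contribution is $f\sum_{k\geq 1}(x^{2k}+x^{2k+1})=f\cdot\frac{x^2+x^3}{1-x^2}$, and adding $1+b^1_{S^1}x$ gives the two displayed formulas. I expect no genuine obstacle beyond careful bookkeeping; the one conceptual point worth emphasizing is that the parameter $s$ enters only through the boundary-circle count of the orbit space, hence affects only $b^1_{S^1}$, whereas the higher equivariant Betti numbers are controlled entirely by the $f$ fixed circles, since special exceptional orbits are not fixed and so never appear in $H^*(F)$.
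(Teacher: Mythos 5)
Your proposal is correct and follows essentially the same route as the paper: both arguments start from the vector-space splitting $H^*_{S^1}(M) \cong H^*(M/S^1) \oplus \big(\mathbb{Q}[u]_+\otimes H^*(F)\big)$ of Proposition~\ref{equivCohom5}, compute the Poincar\'e series of the genus-$g$ surface with $f+s$ boundary circles and of each factor $\mathbb{Q}[u]_+\otimes H^*(F_i)$, and add. Your version simply spells out the degree-by-degree bookkeeping that the paper leaves implicit.
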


\begin{proof}
By Theorem \ref{equivCohom5}, the equivariant cohomology of $M$ is
\[
H^*_{S^1}(M) \cong H^*(M/{S^1}) \oplus \oplus_{i=1}^f\Big(\mathbb{Q}[u]_+\otimes H^*(F_i)\Big) \quad \mbox{as vector spaces}
\]
where $\mathbb{Q}[u]_+$ is the set of polynomials without constant terms and $F=\cup_i^f F_i$ is the union of fixed circles.

Note that, $M/S^1$ is a 2d surface of genus $g$ with $f+s>0$ boundaries. Its Poincar\'e series are ${1+(2g+f+s-1)x}$ if $\epsilon=o$, or  ${1+(g+f+s-1)x}$ if $\epsilon=n$, using the classic result on the cohomology of 2d surface with boundary. For each $\mathbb{Q}[u]_+\otimes H^*(F_i),\, 1\leq i \leq f$, it's easy to see that the Poincar\'e series are $\frac{x^2}{1-x^2}\cdot (1+x)$.

Then we can calculate the equivariant Poincar\'e series $P^M_{S^1}(x)$ and equivariant Betti numbers $b_{S^1}^*$ of $M$ additively from those of $M/S^1$ and $F_i$.
\end{proof}

\subsection{Equivariant formality}
Using the explicit description of the ring and module structures, we can determine when a closed 3d $S^1$-manifold is equivariant formal in the following sense.

\begin{dfn}
A $G$-action on a manifold $M$ is \textbf{equivariantly formal}, if the equivariant cohomology $H_G^*(M)$ is a free $H_G^*(pt)$-module.
\end{dfn}

When talking about equivariant formality, we will only be interested in the case of closed manifolds in this paper.

\begin{thm}\label{equivFormal}
A closed 3d $S^1$-manifold $M = \big\{b;(\epsilon,g,f,s);(m_1,\,n_1),\ldots,(m_r,\,n_r)\big\}$ is $S^1$-equivariantly formal if and only if $f>0,\,b=0$ and
\[
\begin{cases}
\hfill g=s=0 \text{ or } g=0,\,s=1 \hfill& \text{if }\epsilon=o\\
\hfill g=1,\,s=0 \hfill& \text{if }\epsilon=n\\
\end{cases}
\]
\end{thm}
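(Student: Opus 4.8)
The plan is to recast equivariant formality as an algebraic property of $H^*_{S^1}(M)$ viewed as a module over $H^*_{S^1}(pt)=\mathbb{Q}[u]$, and then read off the answer from the explicit structure already obtained. First I would observe that the descriptions in Theorem \ref{equivCohom2} and Theorem \ref{equivCohom5} exhibit $H^*_{S^1}(M)$ as a finitely generated graded $\mathbb{Q}[u]$-module: the pieces $H^0$ and $H^1$ are finite-dimensional, while $H^{\geq 2}=\oplus_i\bigl(\mathbb{Q}[u]_+\otimes H^*(F_i)\bigr)$ is generated over $\mathbb{Q}[u]$ by the $2f$ classes $u\delta_i,u\theta_i$. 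Since $\mathbb{Q}[u]$ is a PID and $u$ sits in positive degree, a finitely generated graded module is free if and only if it is torsion-free, which in turn is equivalent to multiplication by $u$ being injective on the whole module. This reduction is the conceptual crux of the argument.

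Next I would dispose of the fixed-point-free case. If $f=0$ then $M$ has no fixed points, so by Proposition \ref{equivCohom0} we have $H^*_{S^1}(M)=H^*(M/S^1)$, a finite-dimensional graded vector space with $H^0=\mathbb{Q}\neq 0$. A finitely generated free $\mathbb{Q}[u]$-module of positive rank is infinite-dimensional over $\mathbb{Q}$, so such an $M$ can never be equivariantly formal; hence $f>0$ is necessary, and the equality $b=0$ then follows automatically from the classification condition since $f>0$.

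Assuming $f>0$, I would compute the $u$-action degree by degree using Theorem \ref{equivCohom4}, which gives $\pi^*(u)=\sum_i u\delta_i$, together with the cup-product rules of Theorem \ref{equivCohom3}. On $H^0$ the unit maps to $\sum_i u\delta_i\neq 0$, and on $H^{\geq 2}=\oplus_i\bigl(\mathbb{Q}[u]_+\otimes H^*(F_i)\bigr)$ multiplication by $u$ is componentwise multiplication by $u$ on $u\mathbb{Q}[u]\otimes H^*(F_i)$, hence injective. Thus all potential $u$-torsion lives in $H^1$. Via the restriction $H^1_{S^1}(M)\to\oplus_i H^1(F_i)$ sending a class to $\sum_i C_i\theta_i$, the product with $\pi^*(u)$ becomes $\sum_i C_i\,u\theta_i\in H^3_{S^1}(M)$, so the kernel of $u$ on $H^1$ is exactly the set of classes with $C_1=\cdots=C_f=0$.

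Finally I would translate the vanishing of this kernel into the stated numerical conditions. Using the basis of $H^1_{S^1}(M)$ from Theorem \ref{equivCohom2} (and Remark \ref{rmk:equivCohom3.5} when $\epsilon=n$), the kernel is cut out inside the span of $\{\alpha_k,\beta_k\}_{k=1}^g$ (only $\{\alpha_k\}$ for $\epsilon=n$) and $\{\theta_{f+j}\}_{j=1}^s$ by the single relation $\sum_k(A_k+B_k)+\sum_j C_{f+j}=0$, so its dimension is $\max(2g+s-1,0)$ for $\epsilon=o$ and $\max(g+s-1,0)$ for $\epsilon=n$. The kernel vanishes precisely when $2g+s\le 1$ (orientable) or $g+s\le 1$ (non-orientable). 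The one delicate point, and the main obstacle to obtaining the asymmetric list in the statement, is that a non-orientable surface necessarily has genus $g\ge 1$: in the non-orientable case $g+s\le 1$ then forces $g=1,\,s=0$, while in the orientable case $2g+s\le 1$ forces $(g,s)=(0,0)$ or $(0,1)$. These are exactly the cases listed, completing the equivalence.
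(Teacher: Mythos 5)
Your proof is correct, but it takes a genuinely different route from the paper's. The paper separates the two directions: for necessity it uses only the equivariant Betti numbers of Theorem \ref{bettiPoinc} together with the fact that a free graded $\mathbb{Q}[u]$-module has non-decreasing odd and even Betti numbers, so that $b^1_{S^1}\le b^3_{S^1}$ forces $2g+s\le 1$ (resp.\ $g+s\le 1$); for sufficiency it writes down, in each of the three admissible cases, an explicit list of $\mathbb{Q}[u]$-module generators and checks freeness directly. You instead handle both directions at once by invoking the structure theory of finitely generated graded modules over the PID $\mathbb{Q}[u]$ --- free iff torsion-free iff multiplication by $u$ is injective --- and then locating all possible $u$-torsion in $H^1_{S^1}(M)$, where the kernel of $\cup\,\pi^*(u)$ is cut out by $C_1=\cdots=C_f=0$ and has dimension $\max(2g+s-1,0)$ (resp.\ $\max(g+s-1,0)$). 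This is more uniform and conceptually sharper: it identifies the obstruction to formality precisely as the degree-one classes restricting to zero on the fixed set. The price is that you must lean on the full module structure from Theorems \ref{equivCohom3} and \ref{equivCohom4} and verify finite generation over $\mathbb{Q}[u]$ (which your decomposition of $H^{\geq 2}_{S^1}(M)$ does supply), whereas the paper's necessity argument needs only Betti numbers, and its sufficiency argument buys an explicit free basis that it then reuses to cross-check the Poincar\'e series. Both arguments hinge on the same delicate point that a non-orientable surface has genus $g\ge 1$, which is what produces the asymmetric list in the statement.
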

\begin{proof}
For the necessity, when $M$ is $S^1$-equivariantly formal, $H_{S^1}^*(M)$ is a free $H_{S^1}^*(pt)$-module. Since the polynomial ring $H_{S^1}^*(pt)=\mathbb{Q}[u]$ is infinite dimensional, so is $H_{S^1}^*(M)$. Therefore it must have non-empty fixed-point set to generate elements of degree to the infinity, so $f>0$, and hence $b=0$. 

The polynomial ring $\mathbb{Q}[u]$, with $u$ of degree 2, has non-decreasing Betti numbers in odd degrees and even degrees respectively. Hence, so does any free $H_{S^1}^*(pt)=\mathbb{Q}[u]$-module. 
\begin{eqnarray*}
b^{2k}_{S^1} &\leq& b^{2k+2}_{S^1} \quad \mbox{for $k\geq 0$}\\
b^{2k+1}_{S^1} &\leq& b^{2k+3}_{S^1} \quad \mbox{for $k\geq 0$}
\end{eqnarray*}
Especially, we will verify $b^1_{S^1} \leq   b^3_{S^1}$ by substituting our calculation of the Betti numbers $b_{S^1}^*$ from Theorem \ref{bettiPoinc}. 

When $\epsilon = o$, we get $2g+f+s-1 \leq  f$, or equivalently, $2g+s \leq 1$. Here, $s$ as the number of special exceptional components in $M$, is non-negative; $g$ as the genus of an orientable surface, is also non-negative. These constraints force $g=s=0$ or $g=0,\,s=1$.

When $\epsilon = n$, we get $g+f+s-1 \leq  f$, or equivalently, $g+s \leq 1$. Here $s$ again is non-negative. But $g$ as the genus of a non-orientable surface, is strictly positive. These constraints force $g=1,\,s=0$.

For the sufficiency, let's first assume $f>0,\,b=0$. 

When $\epsilon = o,\,g=0,\,s=0$, there are no $\alpha_k,\beta_k,\theta_{f+j}$ terms, by Theorem \ref{equivCohom2}. Also note that $D\delta_0+\sum_{i=1}^f C_i \theta_i$ can be absorbed into $\sum_i(p_i(u)\delta_i+q_i(u)\theta_i)$ because of the relations(2)(3) in that Theorem. Hence there is a much nicer expression of an element of the equivariant cohomology $H_{S^1}^*(M)$:
\[
\sum_{i=0}^f\big(p_i(u)\delta_i+q_i(u)\theta_i\big) \in \mathbb{Q}[u]\otimes H^*(F)
\]
under the relations:
\[
p_1(0)=p_2(0)=\cdots=p_f(0) \mbox{ and } \sum_{i=0}^f q_i(0)=0 
\]
This is indeed a free $\mathbb{Q}[u]$-module, since we can find its $\mathbb{Q}[u]$-module generators without extra relations:
\begin{eqnarray*}
& \sum_{i=0}^f \delta_i  &\mbox{(1 term in deg 0)}\\
& \theta_1-\theta_2,\,\ldots,\,\theta_1-\theta_f  &\mbox{($f-1$ terms in deg 1)}\\
& u(\delta_1-\delta_2) ,\,\ldots,\, u(\delta_1-\delta_f) &\mbox{($f-1$ terms in deg 2)}\\
& u\sum_{i=0}^f \theta_i  &\mbox{(1 term in deg 3)}
\end{eqnarray*}

When $\epsilon = o,\,g=0,\,s=1$, there are no $\alpha_k,\beta_k$ terms and only one $\theta_{f+1}$ term among the $\theta_{f+j}$ terms, by Theorem \ref{equivCohom2}. 
Again we can absorb $D\delta_0+\sum_{i=1}^f C_i \theta_i$ into $\sum_i(p_i(u)\delta_i+q_i(u)\theta_i)$. Moreover, the condition (1) in Theorem \ref{equivCohom2} says $C_{f+1}+\sum_{i=0}^f q_i(0)=0$, so we can absorb $C_{f+1}\theta_{f+j}$ into $\sum_i q_i(u)\theta_i$. Hence, every element of the equivariant  cohomology $H_{S^1}^*(M)$ can be expressed as:
\[
\sum_{i=0}^f\big(p_i(u)\delta_i+q_i(u)\theta_i\big) \in \mathbb{Q}[u]\otimes H^*(F)
\]
under the relations:
\[
p_1(0)=p_2(0)=\cdots=p_f(0)
\]
This is indeed a free $\mathbb{Q}[u]$-module, since we can find its $\mathbb{Q}[u]$-module generators without extra relations:
\begin{eqnarray*}
	& \sum_{i=0}^f \delta_i  &\mbox{(1 term in deg 0)}\\
	& \theta_1,\,\ldots,\,\theta_f  &\mbox{($f$ terms in deg 1)}\\
	& u(\delta_1-\delta_2) ,\,\ldots,\, u(\delta_1-\delta_f) &\mbox{($f-1$ terms in deg 2)}\\
\end{eqnarray*}

When $\epsilon = n,\,g=1,\,s=0$, there is only one $\alpha_1$ term among the $\alpha_k$'s, but no $\beta_k,\theta_{f+j}$ terms, by Theorem \ref{equivCohom2} and the remark next to it. Again we can absorb $D\delta_0+\sum_{i=1}^f C_i \theta_i$ into $\sum_i(p_i(u)\delta_i+q_i(u)\theta_i)$. Moreover, the condition (1) in Theorem \ref{equivCohom2} says $A_1+\sum_{i=0}^f q_i(0)=0$, so we can absorb $A_1\alpha_1$ into $\sum_i q_i(u)\theta_i$. Hence, every element of the equivariant  cohomology $H_{S^1}^*(M)$ can be expressed as:
\[
\sum_{i=0}^f\big(p_i(u)\delta_i+q_i(u)\theta_i\big) \in \mathbb{Q}[u]\otimes H^*(F)
\]
under the relations:
\[
p_1(0)=p_2(0)=\cdots=p_f(0)
\]
This is indeed a free $\mathbb{Q}[u]$-module, since we can find its $\mathbb{Q}[u]$-module generators without extra relations:
\begin{eqnarray*}
& \sum_{i=0}^f \delta_i  &\mbox{(1 term in deg 0)}\\
& \theta_1,\,\ldots,\,\theta_f  &\mbox{($f$ terms in deg 1)}\\
& u(\delta_1-\delta_2) ,\,\ldots,\, u(\delta_1-\delta_f) &\mbox{($f-1$ terms in deg 2)}
\end{eqnarray*}
\end{proof}

If we focus on the oriented case with $\epsilon=o,s=0$, then
\begin{cor}
A closed oriented 3d $S^1$-manifold $M = \big\{b;(\epsilon=o,g,f,s=0);(m_1,\,n_1),\ldots,(m_r,\,n_r)\big\}$ is $S^1$-equivariantly formal if and only if $f>0,\,b=0,\,g=s=0$.
\end{cor}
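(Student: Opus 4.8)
The plan is to obtain this as a direct specialization of Theorem \ref{equivFormal}, which already characterizes equivariant formality for an arbitrary closed 3d $S^1$-manifold $M = \big\{b;(\epsilon,g,f,s);(m_1,\,n_1),\ldots,(m_r,\,n_r)\big\}$. Since the corollary concerns the oriented case with no special exceptional orbits, I would simply set $\epsilon = o$ and $s = 0$ in that theorem and read off the surviving conditions, with no new cohomological computation required.

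More concretely, fixing $\epsilon = o$, Theorem \ref{equivFormal} asserts that $M$ is $S^1$-equivariantly formal precisely when $f > 0$, $b = 0$, and the condition ``$g = s = 0$ or $g = 0,\, s = 1$'' holds. Imposing the standing hypothesis $s = 0$ of the corollary immediately discards the alternative ``$g = 0,\, s = 1$'', which forces $s = 1$, and leaves only the alternative ``$g = s = 0$''. Combined with $f > 0$ and $b = 0$, this is exactly the stated condition $f > 0,\, b = 0,\, g = s = 0$, establishing both directions of the equivalence simultaneously.

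The only point that needs care is confirming that the two branches of the piecewise genus condition collapse correctly under $s = 0$ --- in particular, that the $s = 1$ branch is genuinely excluded rather than silently merged with the other. Since the two branches differ precisely in the value of $s$, this is a matter of inspection rather than a real obstacle. I expect the entire argument to be a one-line deduction from Theorem \ref{equivFormal}, so the ``hard part'' is simply making sure the specialization is stated cleanly and that the surviving case matches the claimed hypotheses.
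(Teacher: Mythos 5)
Your proposal is correct and matches the paper exactly: the corollary is stated there as an immediate specialization of Theorem \ref{equivFormal} to the case $\epsilon=o$, $s=0$, with no further argument given, and your observation that the $s=1$ branch is excluded by the standing hypothesis is precisely the only point that needs checking.
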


When a closed 3d $S^1$-manifold $M$ satisfies $\{\epsilon=o, f>0, b=0,g=s=0\}$, we get its Poincar\'e series using Theorem \ref{bettiPoinc}:
\[
P^M_{S^1}(x) = 1+(f-1)x+f\cdot\frac{x^2+x^3}{1-x^2}
\]
On the other hand, the enumeration of $\mathbb{Q}[u]$-module generators in the above proof of Theorem \ref{equivFormal} gives the Poincar\'e series
\begin{eqnarray*}
P^M_{S^1}(x) &=& \big(1+(f-1)x+(f-1)x^2+x^3\big)\cdot P^{pt}_{S^1}(x)\\
             &=& \big(1+(f-1)x+(f-1)x^2+x^3\big)\cdot (1+x^2+x^4+\cdots)\\
             &=& \frac{1+(f-1)x+(f-1)x^2+x^3}{1-x^2}
\end{eqnarray*}
However, one can easily check that these two expressions are the same. 

Similarly, when a closed 3d $S^1$-manifold $M$ satisfies $\{\epsilon=o,f>0,b=0,g=0,s=1\}$ or $\{\epsilon=n,f>0,b=0,g=1,s=0\}$, we get its Poincar\'e series using Theorem \ref{bettiPoinc}:
\[
P^M_{S^1}(x) = 1+fx+f\cdot\frac{x^2+x^3}{1-x^2}
\]
On the other hand, the enumeration of $\mathbb{Q}[u]$-module generators in the above proof of Theorem \ref{equivFormal} gives the Poincar\'e series
\begin{eqnarray*}
P^M_{S^1}(x) &=& \big(1+fx+(f-1)x^2\big)\cdot P^{pt}_{S^1}(x)\\
             &=& \big(1+fx+(f-1)x^2\big)\cdot (1+x^2+x^4+\cdots)\\
             &=& \frac{1+fx+(f-1)x^2}{1-x^2}
\end{eqnarray*}
One can also easily check that these two expressions are the same. 

\section{Acknowledgment}
The author would like to thank Professor Victor Guillemin for suggesting this project and the guidance throughout, and thank Professor Jonathan Weitsman for continuous encouragement and inspiring discussions. 

\bibliographystyle{amsalpha}

\end{document}